\documentclass[11pt]{article}
\usepackage[ruled,vlined,linesnumbered]{algorithm2e}
\usepackage{amsmath,amsfonts,amssymb,amsthm,latexsym}
\usepackage{epsfig}
\usepackage{color}
\usepackage{enumerate}
\usepackage{hyperref}
\usepackage{enumerate}
\usepackage{graphicx}
\usepackage[dvipsnames]{xcolor}
\usepackage[cp1250]{inputenc}
\usepackage{tikz}
\usepackage{tkz-graph}
\usepackage{tkz-berge}
\usepackage{subcaption}

\newcommand{\smallqed}{{\tiny ($\Box$)}}

\newcommand{\fk}{\chi_{t}^{f}}
\newcommand{\fd}{\chi_{2}^{f}}

\def\cp{\,\square\,}

\usetikzlibrary{arrows.meta,shapes.arrows}
\hypersetup{colorlinks=true, linkcolor=blue, citecolor=blue,
urlcolor=blue}
\usetikzlibrary{calc}

\hypersetup{colorlinks=true}

\hypersetup{colorlinks=true, linkcolor=blue, citecolor=blue,urlcolor=blue}

\def\cp{\,\square\,}

\hypersetup{colorlinks=true}

\hypersetup{colorlinks=true, linkcolor=blue, citecolor=blue,urlcolor=blue}

\title{Frugal coloring of graphs revisited}
\date{}
\author{{Bo\v{s}tjan Bre\v{s}ar$^{a,b}$, Wenjie Hu$^{c,a}$ and Babak Samadi$^{b}$}\vspace{2mm}\\
$^{a}$Faculty of Natural Sciences and Mathematics, University of Maribor, Slovenia\vspace{1mm}\\
$^{b}$Institute of Mathematics, Physics and Mechanics, Ljubljana, Slovenia\vspace{1mm}\\
$^{c}$Hubei Key Laboratory of Applied Mathematics, Faculty of Mathematics and Statistics\\
Hubei University, Wuhan 430062, China\vspace{1.5mm}\\
\texttt{bostjan.bresar@um.si}\\
\texttt{huwj@stu.hubu.edu.cn}\\
\texttt{babak.samadi@imfm.si}}
\date{}

\newtheorem{theorem}{Theorem}[section]
\newtheorem{corollary}[theorem]{Corollary}
\newtheorem{lemma}[theorem]{Lemma}
\newtheorem{observation}[theorem]{Observation}
\newtheorem{proposition}[theorem]{Proposition}

\newtheorem{p}{Problem}

\theoremstyle{definition}

\theoremstyle{remark}

\begin{document}

\maketitle

\begin{abstract}
Given a graph $G$ and a positive integer $t$, an independent set $S\subseteq V(G)$ is $t$-frugal if every vertex in $G$ has at most $t$ vertices from $S$ in its neighborhood. A $t$-frugal coloring of $G$ is a partition of its vertex set into $t$-frugal independent sets. The maximum cardinality of a $t$-frugal independent set in $G$ is denoted by $\alpha_t^f(G)$, while the minimum cardinality of a $t$-frugal coloring of $G$, $\chi_t^f(G)$, is called the $t$-frugal chromatic number of $G$. Frugal colorings were introduced in 1998 by Hind, Molloy and Reed, and studied later in just a handful of papers. In this paper, we revisit this concept by studying it from various perspectives. While the NP-hardness of frugal coloring is known, we prove that the decision version of $\alpha_t^f$ is NP-complete for any positive integer $t$ even when restricted to bipartite graphs, and present a linear-time algorithm to determine its value for trees. 
We present several bounds on both parameters. In particular, for any positive integer $t$, we prove a general sharp lower bound on $\chi_{t}^{f}(G)$ expressed in terms of $\alpha_{t}^{f}(G)$ and size of $G$. We also prove a sharp upper bound on the $2$-frugal independence number of any graph $G$, which in the case of graphs $G$ with minimum degree $\delta$ at least $2$ simplifies to $\alpha_2^f(G)\le 2n/(\delta+2)$. While a greedy upper bound for subcubic graphs $G$ yields $\chi_2^f(G)\le 7$, we obtain a substantial improvement by proving that $3\le\chi_2^f(G)\le 5$ holds for any graph $G$ with $\Delta(G)=3$. For several classes of graphs such as claw-free cubic graphs and block graphs, as well as for the Cartesian and strong products of multiple two-way infinite paths, we are able to determine the exact values of their $2$-frugal chromatic numbers. We provide sharp upper bounds for the $2$-frugal chromatic numbers in all four standard graph products, which are expressed as different invariants of their factors depending on the type of product. In the cases of Cartesian and lexicographic products, we also obtain sharp lower bounds. Finally, we obtain Nordhaus-Gaddum type results, which bound the sum of the $2$-frugal chromatic numbers of $G$ and its complement $\overline{G}$ from below and from above by functions of the order of $G$. For the upper bound  $\chi_{2}^{f}(G)+\chi_{2}^{f}(\overline{G})\leq 3n/2$, we characterize the family of extremal graphs $G$.
\end{abstract}

\noindent \textbf{2020 Mathematics Subject Classification:} 05C15, 05C69, 05C76\\
\textbf{Keywords}: frugal coloring, frugal independent set, subcubic graph, Nordhaus-Gaddum inequality, complexity, graph product


\section{Introduction} 

A proper vertex coloring of a graph $G$ is $t$-\textit{frugal} if no color appears more than $t$ times in the neighborhood of any vertex in $G$. The $t$-\textit{frugal chromatic number} of $G$ is the minimum $k$ for which $G$ admits a $t$-frugal coloring with $k$ colors, and we denote it by $\fk(G)$. Hind, Molloy and Reed~\cite{HMR} initiated the concept of frugal coloring in 1997 proving that a graph $G$ with sufficiently large maximum degree $\Delta$ admits a $({\log}^8{\Delta})$-frugal coloring with $\Delta+1$ colors and mentioning an application for total colorings. The result was improved by Ndreca, Procacci and Scoppola~\cite{nps} in 2012, while only a few other authors considered this type of coloring so far~\cite{AEv,BMR,KM}; see also~\cite{ch-2011} where its list version was considered. In this paper, we expand the consideration of frugal colorings by studying several aspects that are of relevance for coloring invariants. 

Frugal colorings are related to several known coloring invariants. A {\em $2$-distance coloring} of a graph $G$ is a mapping $c:V(G)\to \{1,\ldots,k\}$ such that any two vertices at distance at most $2$ receive different colors. The minimum number of colors in such a coloring is the {\em $2$-distance chromatic number} $\chi_{2}(G)$ of $G$; see Kramer and Kramer~\cite{kk0} for systematic treatment of $d$-distance coloring. Alternatively, $2$-distance coloring of $G$ coincides with the coloring of the square $G^2$ of $G$, where two vertices in $G^2$ are adjacent if they are adjacent or have a common neighbor in $G$, and so $\chi_{2}(G)=\chi(G^{2})$. Clearly, the condition of $2$-frugal coloring is weaker than that of $2$-distance coloring, thus $\fd(G)\le \chi_2(G)$ for any graph $G$. A proper coloring of a graph $G$ in which the vertices of any two color classes induce a forest of paths is a {\em linear coloring} of $G$, as introduced by Yuster~\cite{yus}. Note that a linear coloring is a $2$-frugal coloring of $G$, but the converse is not necessarily true. Thus, for the resulting invariant ${\rm lc}(G)$ of $G$, which is the minimum number of colors in a linear coloring in $G$, we get $\fd(G)\le {\rm lc}(G)$. 

Another concept related to frugal coloring arises from that of limited packing as introduced by Gallant et al.~\cite{GGHR} and studied further in~\cite{GZ,BS}. A set $B\subseteq V(G)$ is a {\em $t$-limited packing} if the closed neighborhood of each vertex in $G$ contains at most $t$ vertices in $B$. Note that when $t=1$, the resulting $1$-limited packing coincides with the concept of $2$-packing, as introduced by Meir and Moon~\cite{MM} and studied in many papers concerning graph domination. Recently, a \textit{$t$-limited packing partition} of $G$, which is a partition of $V(G)$ into $t$-limited packings, was considered and the following coloring invariant was introduced. 
The \textit{$t$-limited packing partition number}, denoted by $\chi_{\times t}(G)$, is the minimum cardinality of a $t$-limited packing partition in $G$; see~\cite{AS,AL}. 
Note that when $t\ge 2$, the partition is not necessarily a proper coloring. However, additionally imposing that the color classes in a $t$-limited packing partition are independent results precisely in a $t$-frugal coloring. In particular, $\fk(G)\ge \chi_{\times t}(G)$ holds for any graph $G$ and any positive integer $t$.

The behavior of a particular graph coloring is inherently related to the nature and structure of its color classes. On the other hand, the color classes of many types of graph colorings have been studied independently. For example, a large number of papers have been published about ``independent", ``$2$-packing", ``open packing" and ``dissociation" sets, which are color classes of the ``standard", ``$2$-distance", ``injective" and ``defective" colorings, respectively. Moreover, the color classes of frugal colorings are a variant of limited packings, notably, they are independent limited packings. Due to the fundamental role of these sets in the study of frugal colorings, we investigate them under the name of {\em frugal independent sets}. More specifically, such a set is a {\em $t$-frugal independent set} ($t$FI set for short), where $t$ is a positive integer. The maximum cardinality of a $t$FI set in $G$ will be denoted by $\alpha_t^f(G)$, and called $t$-{\em frugal independence number} of $G$.  
In line with the above arguments, we will investigate frugal independent sets along with frugal colorings both from computational and combinatorial points of view.

\subsection{Preliminaries}

Throughout the paper, we consider $G$ as a simple graph with vertex set $V(G)$ and edge set $E(G)$. In addition, $G$ is finite unless explicitly stated otherwise. We use~\cite{West} as a reference for terminology and notation which are not explicitly defined here. The (\textit{open}) {\em neighborhood} of a vertex $v$ is denoted by $N_{G}(v)$, and its {\em closed neighborhood} is $N_{G}[v]=N_{G}(v)\cup \{v\}$. The {\em minimum} and {\em maximum degrees} of $G$ are denoted by $\delta(G)$ and $\Delta(G)$, respectively. Given subsets $A,B\subseteq V(G)$, let $[A,B]$ denote the set of all edges with one endvertex in $A$ and the other in $B$. For simplicity, we use the notation $[k]$ instead of $\{1,\ldots,k\}$ for any positive integer $k$. By a $\chi_{t}^{f}(G)$-coloring we mean a $t$-frugal coloring of $G$ with $\chi_{t}^{f}(G)$ colors.

Since $\chi_{t}^{f}(G)=\chi(G)$ for each integer $t\ge \Delta(G)$, where $\chi(G)$ is the chromatic number of $G$,  we restrict our attention to the cases when $t\in[\Delta(G)-1]$. Moreover, the following inequality chain follows from the definitions. 

\begin{observation}\label{Chain}
For any graph $G$ with maximum degree $\Delta$ and positive integer $k\in[\Delta]$,
\begin{center}
$\chi(G)=\chi_{\Delta}^{f}(G)\leq \ldots\leq \chi_{2}^{f}(G)\leq \chi_{1}^{f}(G)=\chi(G^{2})$.
\end{center}
\end{observation}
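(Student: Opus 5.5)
The plan is to prove the three parts of Observation~\ref{Chain} separately: the monotonicity in the middle, and the two endpoint identities.

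\emph{Monotonicity.} Fix a positive integer $t<\Delta$. Any $t$-frugal coloring is proper, and every color appears at most $t\le t+1$ times in each neighborhood. Hence it is also a $(t+1)$-frugal coloring. This gives $\chi_{t+1}^{f}(G)\le \chi_{t}^{f}(G)$. Chaining these inequalities for $t=1,\ldots,\Delta-1$ yields the middle part of the chain.

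\emph{Left endpoint.} Every $\Delta$-frugal coloring is proper, so $\chi(G)\le \chi_{\Delta}^{f}(G)$. Conversely, take any proper coloring of $G$. Each neighborhood has at most $\Delta$ vertices, so no color can occur more than $\Delta$ times in it. Thus every proper coloring is $\Delta$-frugal, and $\chi_{\Delta}^{f}(G)=\chi(G)$.

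\emph{Right endpoint.} This is the step needing the most care, and I would show that the $1$-frugal colorings of $G$ are exactly the proper colorings of $G^2$.

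Suppose $c$ is $1$-frugal, and let $u,v$ be adjacent in $G^2$. If $uv\in E(G)$, then $c(u)\ne c(v)$ because $c$ is proper. Otherwise $u$ and $v$ have a common neighbor $w$. Both lie in $N_G(w)$, and each color occurs at most once there, so again $c(u)\ne c(v)$.

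Conversely, suppose $c$ is a proper coloring of $G^2$. Since $G\subseteq G^2$, it is proper on $G$. Any two distinct vertices of $N_G(w)$ are at distance at most $2$ in $G$, so they get different colors. Hence each color appears at most once in every neighborhood, and $c$ is $1$-frugal.

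It follows that $\chi_{1}^{f}(G)=\chi(G^2)$, which completes the chain. The only subtlety is being careful that "$t$-frugal" includes properness, which the definition does.
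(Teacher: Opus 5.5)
Your proposal is correct and matches the paper's approach. The paper simply states that the chain follows from the definitions, and you supply exactly those routine verifications: a $t$-frugal coloring is $(t+1)$-frugal, every proper coloring is $\Delta$-frugal, and the $1$-frugal colorings coincide with the proper colorings of $G^{2}$.
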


Let $\mathcal{J}$ be the color classes of a $\chi_{t}^{f}(G)$-coloring, and let $u$ be a vertex in $G$ of maximum degree. Let $u\in J$, where $J\in \mathcal{J}$. By definition, $J$ is an independent set and $u$ has at most $t$ neighbors in each color class in $\mathcal{J}\setminus \{J\}$. This shows that
\begin{equation}\label{Delta}
\Delta(G)=\deg_{G}(u)=\sum_{J'\in \mathcal{J}\setminus \{J\}}|N_{G}(u)\cap J'|\leq t|\mathcal{J}\setminus \{J\}|=t\fk(G)-t.
\end{equation}
Therefore, $\fk(G)\geq \lceil \Delta(G)/t\rceil+1$. The following lower bound on the $t$-frugal chromatic number follows directly from the latter inequality and Observation \ref{Chain}. 

\begin{observation}
\label{ob:lowerboundfor2}
If $G$ is a graph and $t\ge1$, then $\fk(G)\ge \max\{\chi(G),\big\lceil\frac{\Delta(G)}{t}\big\rceil+1\}.$   
\end{observation}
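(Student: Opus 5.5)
The bound is the maximum of two separate lower bounds, so I will establish each one on its own and then take the larger.

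\emph{First term.} Every $t$-frugal coloring is in particular a proper coloring, because by definition each color class is an independent set. Hence any $t$-frugal coloring with $k$ colors is a proper $k$-coloring, and so $\chi(G)\le \fk(G)$. This is also the relevant part of the chain in Observation~\ref{Chain}, since each $t$-frugal coloring is a $\Delta$-frugal coloring.

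\emph{Second term.} This is the degree-counting argument leading to (\ref{Delta}).
\begin{itemize}
\item Fix a $\fk(G)$-coloring with color classes $\mathcal{J}$, and let $u$ be a vertex of maximum degree.
\item Let $J\in\mathcal{J}$ be the class containing $u$. Since $J$ is independent, $u$ has no neighbors in $J$.
\item By $t$-frugality, $u$ has at most $t$ neighbors in each class $J'\in\mathcal{J}\setminus\{J\}$.
\item Summing over these $|\mathcal{J}|-1=\fk(G)-1$ classes gives $\Delta(G)\le t(\fk(G)-1)$.
\item Therefore $\fk(G)\ge \Delta(G)/t+1$. Since $\fk(G)$ is an integer, $\fk(G)\ge\lceil\Delta(G)/t\rceil+1$.
\end{itemize}

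Combining the two inequalities gives the stated maximum. The only step needing any care is the integrality rounding: $\fk(G)-1$ is an integer at least $\Delta(G)/t$, so it is at least $\lceil \Delta(G)/t\rceil$. One should also note the degenerate case $\Delta(G)=0$, where the bound reads $\fk(G)\ge 1$, which is trivially true for a nonempty graph. No step is a real obstacle.
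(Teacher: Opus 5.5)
Your proof is correct and follows the paper's argument. The $\chi(G)$ term comes from every $t$-frugal coloring being proper (the chain in Observation~\ref{Chain}), and the second term is the same degree count at a maximum-degree vertex as in inequality~(\ref{Delta}), with the same integrality rounding.
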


Concerning upper bounds with respect to the maximum degree of a graph, we obtain the following observation, which immediately follows by using a greedy coloring algorithm.

\begin{observation}
\label{ob:boundDelta}
If $G$ is a graph with maximum degree $\Delta$, then $\chi_{t}^{f}(G)\le 1+\Delta(1+\big \lfloor\frac{\Delta-1}{t}\big\rfloor)$.     
\end{observation}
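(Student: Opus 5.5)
We color the vertices greedily in an arbitrary order $v_1,\dots,v_n$ from the palette $[1+\Delta(1+\lfloor\frac{\Delta-1}{t}\rfloor)]$. When we reach $v_i$, we only forbid those colors whose use at $v_i$ would violate the definition of a $t$-frugal coloring with respect to the already colored vertices. The count then shows that at least one color remains available. Since every constraint that could later be violated involves $v_i$ together with previously colored vertices, checking the constraints at the moment each vertex is colored suffices to guarantee that the final coloring is $t$-frugal.

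\textbf{Forbidden colors.} When coloring $v=v_i$, a color $c$ is forbidden for one of two reasons.
\begin{enumerate}[(a)]
\item Some neighbor of $v$ already has color $c$ (properness). This excludes at most $\deg(v)\le\Delta$ colors.
\item For some neighbor $u$ of $v$, color $c$ already appears exactly $t$ times in $N(u)\setminus\{v\}$ among the colored vertices. Assigning $c$ to $v$ would then give $t+1$ occurrences of $c$ in $N(u)$.
\end{enumerate}
Conversely, if $c$ avoids both (a) and (b), then coloring $v$ with $c$ keeps the coloring proper and creates no neighborhood with more than $t$ vertices of one color. Only neighborhoods $N(u)$ with $u\in N(v)$ gain a vertex, so no other constraint can be affected.

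\textbf{Counting.} Fix a neighbor $u$ of $v$. The set $N(u)\setminus\{v\}$ has at most $\Delta-1$ vertices. Each color of type (b) for $u$ uses $t$ of them, so the number of such colors is at most $\lfloor\frac{\Delta-1}{t}\rfloor$. Summing over the at most $\Delta$ neighbors $u$, and adding the at most $\Delta$ colors from (a), the total number of forbidden colors is at most $\Delta+\Delta\lfloor\frac{\Delta-1}{t}\rfloor=\Delta(1+\lfloor\frac{\Delta-1}{t}\rfloor)$. Hence a palette of size one larger always leaves a free color, and the bound of Observation~\ref{ob:boundDelta} follows.

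The only step needing care is the per-neighbor count in (b). Here it is essential to exclude $v$ itself from $N(u)$, so that the count uses $\Delta-1$ rather than $\Delta$ vertices; this is exactly why the bound contains $\lfloor(\Delta-1)/t\rfloor$. Beyond that, the argument is routine.
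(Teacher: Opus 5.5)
Your proof is correct and follows the same approach as the paper, which only states that the bound follows from a greedy coloring algorithm. Your explicit count supplies the omitted verification: at most $\Delta$ colors are excluded by properness, and at most $\lfloor(\Delta-1)/t\rfloor$ colors are already saturated in each $N(u)\setminus\{v\}$ with $u\in N(v)$.
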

The bound in Observation~\ref{ob:boundDelta} can be sharp. For instance, when $t=2$ and $\Delta=2$, the bound reads $\chi_{2}^{f}(G)\le 1+2(1+\big \lfloor\frac{2-1}{2}\big\rfloor)=3$, and $\chi_{2}^{f}(C_{2r+1})=3$ where $r\in\mathbb{N}$.  


\subsection{Main results and organization of the paper}

We start with computational aspects of the two main invariants studied in this paper. As proved in~\cite{BMR}, the decision version of the $t$-frugal chromatic number is NP-complete for any positive integer $t$. Hence, in Section~\ref{sec:computational}, we concentrate on computational aspects with respect to the $t$-frugal independence number and prove that the decision version of $\alpha_t^f(G)$ is NP-complete for any positive integer $t$ even if restricted to bipartite graphs $G$. In contrast, we present a linear-time algorithm for computing $\alpha_t^f(T)$ in an arbitrary tree $T$ and for any $t$. 

In Section~\ref{sec:generalbounds}, we prove several general bounds involving one or both graph invariants. In particular, we prove that $\chi_{t}^{f}(G)\geq {1}/{2}+\sqrt{{1}/{4}+{2m}/({t\alpha_{t}^{f}(G))}}$ holds for any positive integer $t$ and any graph $G$ of size $m$, and we characterize the graphs that attain this bound. On the other hand, when $G$ is triangle-free, we prove an upper bound on $\chi_{2}^{f}(G)$ expressed in terms of the order of $G$ and its $2$-frugal independence number. In addition, we prove an upper bound on the $2$-frugal independence number of an arbitrary graph $G$ expressed in terms of the order of $G$ and several other invariants that depend on its pendant vertices. In Section~\ref{sec:subcubic} we focus on subcubic graphs, where our main result is that any graph $G$ with $\Delta(G)=3$ satisfies $3\le \fd(G)\le 5$. While the lower bound is trivial, the upper bound is proved by an extensive case analysis. If, in addition, $G$ is claw-free, then the equality $\fd(G)=3$ holds.

Section~\ref{sec:NG} is devoted to Nordhaus-Gaddum results, where we again focus on the $t$-frugal chromatic number where $t=2$. We prove that
$$\dfrac{n}{2}+2\le\chi_{2}^{f}(G)+\chi_{2}^{f}(\overline{G})\leq \dfrac{3n}{2},$$
where the lower bound holds for all graphs $G$ of order $n$ with the exception of $6$ sporadic graphs, while the upper holds for all graphs $G$. In addition, the upper bound holds with equality if and only if $n$ is even and $G\in \{K_{1,n-1},\overline{K_{1,n-1}}\}$.

In Section~\ref{sec:classes}, we consider several graph classes and graph operations. For an arbitrary block graph $G$ we obtain the exact value of the $2$-frugal chromatic number, that is,
 $\fd(G)=\max\big\{\omega,\big\lceil\frac{\Delta}{2}\big\rceil+1\big\}$, where $\omega$ is the clique number and $\Delta$ the maximum degree of $G$. If $G\cp H$ is the Cartesian product of graphs $G$ and $H$ we obtain the following lower and upper bound on its $2$-frugal chromatic number: $\max\{\chi_{2}^{f}(G),\chi_{2}^{f}(H)\}\le \chi_{2}^{f}(G\cp H)\le \max\{\chi_2(G),\chi_2(H)\}$. The bounds are sharp and in some cases coincide. We also obtain sharp upper bounds for the $2$-frugal chromatic numbers in strong and direct products of two graphs. In addition, we bound the lexicographic product of two graphs as follows: $\chi_{2}^{f}(H)+\Big{\lceil}\frac{\Delta(G)|V(H)|}{2}\Big{\rceil}\leq \chi_{2}^{f}(G\circ H)\leq \chi_{2}^{f}(G)|V(H)|$, and provide examples of sharpness. We continue with the quest for classes of graphs that achieve the trivial lower bound for their $2$-frugal chromatic number, notably when $\fd(G)=\lceil\Delta(G)/2\rceil+1$. We prove that several well-known infinite lattices satisfy this equality. In addition, the equality holds for Cartesian products of several (infinite) paths as well as strong products of several (infinite) paths. We conclude the paper with some open problems and directions for future research.


\section{Computational complexity}
\label{sec:computational}

Concerning the coloring parameter studied in this paper, we recall the result of Bard, MacGillivray and Redlin~\cite{BMR} from 2021. They proved that given a graph $G$ one can determine in polynomial time whether $\chi_1^f(G)\le 3$ and whether $\fk(G)\le 2$ for any positive integer $t$. On the other hand, if $t\ge 2$, it is NP-complete to determine whether $\fk(G)\le 3$, and it is NP-complete to determine whether $\chi_t^f(G)\le \ell$ when $\ell\ge 4$ and $t\ge 1$.

In what follows, we consider the computational complexity aspect of the $t$FI set problem. More formally, we analyze the following decision problem.

\begin{equation*}\label{NPCP1}
\begin{tabular}{|l|}
\hline
\mbox{{\sc $t$-Frugal Independent Set}} \\
\mbox{{\sc Instance}: A graph $G$ and an integer $k\leq|V(G)|$.}\\
\mbox{{\sc Question}: Does $G$ have a $t$FI set of cardinality at least $k$?}\\
\hline
\end{tabular}
\end{equation*}

In the following proof, we will use a reduction from {\sc Exact Cover By 3-Sets} (X3C) problem, which is defined as follows. Given a set $X$, with $|X|=3q$, and a collection $C$ of $3$-element subsets of $X$, can we find a subcollection $C'$ of $C$ such that every element of $X$ occurs in exactly one member of $C'$? If such a subcollection $C'$ exists, it is called an exact cover of $X$. It is well known that X3C problem is NP-complete~\cite{gj}.

\begin{theorem}\label{NPC1}
For each positive integer $t$, {\sc $t$-Frugal Independent Set} problem is NP-complete even for bipartite graphs.
\end{theorem}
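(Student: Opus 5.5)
We reduce from X3C. Membership in NP is clear: given $S$, we check independence and that $|N(v)\cap S|\le t$ for every $v$ in polynomial time. For hardness, take an instance $(X,C)$ with $|X|=3q$ and $C=\{C_1,\dots,C_p\}$.

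\textbf{Construction.} Build a bipartite incidence graph $G$ as follows.
\begin{itemize}
\item There is one vertex $x_i$ for each element of $X$ and one vertex $c_j$ for each triple, with $c_j x_i\in E(G)$ iff $x_i\in C_j$.
\item For $t\ge 2$, attach to each $x_i$ exactly $t-1$ new vertices $y_{i,1},\dots,y_{i,t-1}$.
\item Each $y_{i,\ell}$ is made ``heavy'' by hanging a pendant gadget on it (say a pendant path $y_{i,\ell}z_{i,\ell}w_{i,\ell}$). Any maximum $t$FI set can then be assumed to contain $y_{i,\ell}$ and $w_{i,\ell}$, and not $z_{i,\ell}$ or $x_i$.
\end{itemize}
The graph is bipartite: elements and the $z$'s go on one side; triples, the $y$'s and the $w$'s go on the other. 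In the intended solution every $x_i$ already has $t-1$ neighbours in $S$ from its $y$'s, so it can tolerate exactly one more, which must come from a chosen triple vertex. Thus the chosen $c_j$'s form a family of pairwise disjoint triples. The threshold is $k=q+2\cdot 3q(t-1)$, possibly plus constants contributed by the gadgets.

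\textbf{Forward direction.} An exact cover $C'$ gives the $t$FI set consisting of $\{c_j : C_j\in C'\}$ together with all $y_{i,\ell}$ and $w_{i,\ell}$. We check independence and frugality directly:
\begin{itemize}
\item each $x_i$ sees $t-1$ $y$'s and exactly one chosen $c_j$;
\item each $y_{i,\ell}$ has no neighbour in $S$;
\item each $z_{i,\ell}$ sees two vertices of $S$, which is at most $t$ since $t\ge 2$.
\end{itemize}
The case $t=1$ is handled separately with no $y$'s at all, since disjointness is then forced by frugality alone. In that case we must still prevent $S$ from using the $x_i$'s themselves; this could be done by giving every $x_i$ enough pendant-free structure, or by duplicating $c_j$-like gadgets.

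\textbf{Backward direction (main obstacle).} Suppose $S$ is a $t$FI set with $|S|\ge k$. We must normalize $S$ by exchange arguments:
\begin{enumerate}
\item Each gadget can contribute at most its intended amount.
\item If some $x_i\in S$, or some $y_{i,\ell}\notin S$, then we can modify $S$ locally without decreasing $|S|$. For example, remove $x_i$ and add the missing $y$'s, $w$'s and possibly one adjacent $c_j$.
\item After normalization, $S\cap\{c_j\}$ consists of pairwise disjoint triples, since each $x_i$ has room for only one of them.
\item Counting then forces at least $q$ such triples, and $q$ disjoint triples in a ground set of size $3q$ form an exact cover.
\end{enumerate}
The delicate part is the exchange step. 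A vertex $x_i$ in $S$ blocks its $y$'s from $S$ but also frees the capacity of the $z$'s, and removing $x_i$ may overload some $c_j$ neighbour's frugality count. I would first verify the exchange on the path gadget. If it fails, I would enlarge the gadget, for instance by replacing each single $y_{i,\ell}$ with a larger tree whose maximum $t$FI set strictly requires $y_{i,\ell}$. This makes any deviation at $x_i$ cost at least one vertex, so optimal sets automatically have the normalized shape.
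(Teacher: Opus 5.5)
Your construction is sound and close in spirit to the paper's: both pre-load each element vertex with $t-1$ forced neighbours, so that it can absorb only one chosen triple. The paper uses plain pendant leaves at $x_j'$ and a pendant path $c_rb_ra_r$ at each triple vertex, normalizes a maximum set to contain all leaves and all $a_r$, and then counts. However, your backward direction, the only nontrivial part, is not proved. You leave the exchange unverified and hedge with an alternative gadget.

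The normalization you assert also does not follow from your step 2. That step only treats $x_i\in S$. The critical configuration is $x_i\notin S$ with some $y_{i,\ell}\notin S$, where the freed capacity lets $x_i$ see two or more chosen triple vertices. Repairing that requires \emph{deleting} a surplus $c_j$ and reinserting $y_{i,\ell}$ (swapping $z_{i,\ell}$ for $w_{i,\ell}$), not adding one.

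A direct count closes the gap without any normalization. Let $J$ be the chosen triples and $U$ the elements they cover.
\begin{itemize}
\item Each path $y_{i,\ell}z_{i,\ell}w_{i,\ell}$ contributes at most $2$, and at most $1$ when $y_{i,\ell}\notin S$.
\item Frugality at $x_i\notin S$ forces at least $\deg_J(x_i)-1$ of its $y$'s to be missing.
\item If $x_i\in S$, then $x_i$ and its paths give at most $t\le 2(t-1)$, and every triple at $x_i$ is excluded.
\end{itemize}
Summing,
\[ |S|\le |J|+6q(t-1)-\sum_{i}\max\{0,\deg_J(x_i)-1\}=6q(t-1)+|U|-2|J|. \]
Then $|S|\ge q+6q(t-1)$ together with $|U|\le\min\{3|J|,3q\}$ forces $|J|=q$ and $|U|=3q$, which is an exact cover. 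The same count works with plain pendant leaves, so $z$ and $w$ are unnecessary.

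The case $t=1$ is also left open. As you note, the bare incidence graph fails. For $X=[6]$ and $C=\{\{1,2,3\},\{1,4,5\},\{1,5,6\}\}$ there is no exact cover, yet $\{x_2,x_4,x_6\}$ is a $1$FI set of size $3>q$. ``Pendant-free structure or duplicated gadgets'' is not a construction. Since $1$FI sets are exactly distance-$3$ independent sets ($2$-packings), the paper handles this case by citing the known NP-completeness of that problem on bipartite graphs (Eto, Guo and Miyano). You need that citation or an explicit reduction.
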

\begin{proof}
It is known from \cite{EGM} that {\sc $1$-Frugal Independent Set}, under the name of {\sc Distance-$3$ Independent Set}, is NP-complete for bipartite graphs (by making use of a reduction from INDEPENDENT SET). In view of this, we restrict our attention to $t\geq2$. 
The problem clearly belongs to NP because checking that a given subset of vertices is indeed a $t$FI set of cardinality at least $k$ can be done in polynomial time.

We construct a reduction from X3C to our problem as follows. Let $X=\{x_{1},\ldots,x_{3q}\}$ and $C=\{C_{1},\ldots,C_{p}\}$ be an instance of X$3$C. Corresponding to each $3$-element set $C_{j}$, we associate a path $a_{j}b_{j}c_{j}$. For each element $x_{j}$, we consider a star $K^{j}_{1,t-1}$ with central vertex $x_{j}'$ and set of leaves $\{x_{j,1},\ldots,x_{j,t-1}\}$. Now, let $G$ be constructed from the above disjoint union of graphs by adding edges $x_{j}'c_{r}$ if the element $x_{j}$ is in $C_{r}$. It is easy to see that the graph $G$ is bipartite and its construction can be accomplished in polynomial time. Moreover, we set $k=p+(3t-2)q$.

Let $B$ be an $\alpha_{t}^{f}(G)$-set of cardinality at least $k$. We take any vertex $x_{j}'$. If $x_{j}'\in B$, then $(B\setminus \{x_{j}'\})\cup \{x_{j,1}\}$ is also an $\alpha_{t}^{f}(G)$-set. (In such a case, $x_{j,1}$ is the only pendant vertex adjacent to $x_{j}'$, for otherwise one can obtain a larger $t$FI set by replacing $x_{j}'$ with the pendant vertices adjacent to $x_{j}'$ in $B$, which is impossible.) Assume now that $x_{j}'\notin B$. It is readily seen that $N(x_{j}')\cap B$ must have at least $t-1$ vertices. With this in mind, let $B'$ consist of any $|\{x_{j,1},\ldots,x_{j,t-1}\}\setminus B|$ vertices of $(N(x_{j}')\setminus \{x_{j,1},\ldots,x_{j,t-1}\})\cap B$. In such a situation, $(B\setminus B')\cup \{x_{j,1},\ldots,x_{j,t-1}\}$ is an $\alpha_{t}^{f}(G)$-set containing all pendant vertices adjacent to $x_{j}'$. On the other hand, $|B\cap \{a_{r},b_{r},c_{r}\}|\geq1$ for each $r\in[p]$, since $B$ is a maximum $t$FI set in $G$. So, without loss of generality, we may assume that $a_{r}\in B$ for each $r\in[p]$. In fact, we can assume that $\{a_{1},\ldots,a_{p}\}\bigcup(\bigcup_{j=1}^{3q}\{x_{j,1},\ldots,x_{j,t-1}\})\subseteq B$. 

Note that every vertex in $B\cap \{c_{1},\ldots,c_{p}\}$ has precisely three neighbors in $\{x_{1}',\ldots,x_{3q}'\}$. Moreover, since $\bigcup_{j=1}^{3q}\{x_{j,1},\ldots,x_{j,t-1}\}\subseteq B$, it follows that each vertex $x_{j}'$ is adjacent to at most one vertex in $B\cap \{c_{1},\ldots,c_{p}\}$. In view of this, $|B\cap \{c_{1},\ldots,c_{p}\}|\leq q$. Moreover, if $|B\cap \{c_{1},\ldots,c_{p}\}|<q$, then at least one vertex $b_{r}$ necessarily belongs to $B$ since $|B|\geq p+(3t-2)q$. This contradicts the fact that $B$ is an independent set in $G$. Thus, $|B\cap \{c_{1},\ldots,c_{p}\}|=q$. It is now easy to see that $\{C_{r}\in C:\, c_{r}\in B\}$ is a solution to the instance of X3C.

Conversely, assume that the instance of X3C has a solution $C'\subseteq C$ of cardinality $q$. It is then a routine matter to check that $\{a_{1},\ldots,a_{p}\}\bigcup(\bigcup_{j=1}^{3q}\{x_{j,1},\ldots,x_{j,t-1}\})\cup \{c_{r}:\, C_{r}\in C'\}$ is a $t$FI set in $G$ of cardinality $k$.  This completes the proof. 
\end{proof}

In contrast with the NP-completeness result in Theorem~\ref{NPC1}, the problem is efficiently solvable in trees.

\begin{theorem}
For each positive integer $t$, there exists a linear-time algorithm that computes the value $\alpha_t^{f}(T)$ for any tree $T$. 
\end{theorem}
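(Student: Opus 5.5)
We propose a bottom-up dynamic programming algorithm on a rooted tree. Root $T$ at an arbitrary vertex $r$ and process the vertices in post-order, obtained for instance by a BFS or DFS in $O(n)$ time. For a vertex $v$ with children $u_1,\ldots,u_d$, let $T_v$ denote the subtree rooted at $v$. The constraints of a $t$FI set $S$ are local: $S$ is independent, and $|N(w)\cap S|\le t$ for every vertex $w$. When $v$ is processed, every constraint at a vertex of $T_v\setminus\{v\}$ involves only vertices of $T_v$, so it can be checked completely inside $T_v$. The constraint at $v$ also involves its parent $p(v)$. The constraint at $p(v)$ involves whether $v\in S$. So the interface between $T_v$ and the rest of the tree consists of two facts: whether $v\in S$, and how many neighbours of $v$ lie in $S$ (with $p(v)$ still undetermined).

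We therefore define the following states, each being the maximum of $|S\cap V(T_v)|$ over sets $S\subseteq V(T_v)$ that satisfy all constraints at vertices of $T_v\setminus\{v\}$ (independence and the $t$-bound):
\begin{itemize}
\item $A(v)$: $v\in S$. Then no child is in $S$, so $v$ has $0$ neighbours in $S$ below, and the parent will not be in $S$ either.
\item $B_0(v)$: $v\notin S$ and $v$ has at most $t$ children in $S$, so the parent is free with respect to $v$'s count.
\item $B_1(v)$: $v\notin S$ and exactly $t$ children of $v$ are in $S$, which forbids the parent from being in $S$.
\end{itemize}
It is cleaner to use $B_{\le t-1}(v)$ (at most $t-1$ children in $S$, so the parent may be in $S$) and $B_{\le t}(v)$ (at most $t$ children in $S$, so the parent may not be in $S$). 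The values $-\infty$ mark infeasible states.

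The recurrences are as follows. For $A(v)$, every child $u$ is outside $S$. Since $v\in S$ is a neighbour of $u$, $u$ may have at most $t-1$ of its own children in $S$. Hence
$$A(v)=1+\sum_i B_{\le t-1}(u_i).$$
For the $B$-states, every child $u$ is either in $S$, contributing $A(u)$, or not in $S$. In the latter case $v\notin S$, so $u$ contributes $B_{\le t}(u)$. We must then choose at most $k$ children to be in $S$, where $k=t-1$ or $k=t$, so as to maximise the total.

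Set $g_i=A(u_i)-B_{\le t}(u_i)$. The optimum is
$$\sum_i B_{\le t}(u_i)+\sum_{\text{top } k}\max(g_i,0).$$
The top $k$ nonnegative gains can be selected in $O(d)$ time by linear-time selection (median of medians), or more simply by noting that the set of chosen children is a prefix of the gains sorted in decreasing order. Taking the top $t$ gives $B_{\le t}$, and dropping the $t$-th gain gives $B_{\le t-1}$.

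At the root there is no parent, so the answer is
$$\alpha_t^f(T)=\max\{A(r),B_{\le t}(r)\}.$$
A standard traceback recovers an optimal set.

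The main obstacle is to ensure that the running time is linear regardless of $t$, and to prove correctness of the state decomposition. Correctness follows by induction on $|V(T_v)|$: an optimal set restricted to each child subtree is feasible for exactly one of the child states, and conversely gluing feasible child solutions yields a feasible set for $v$. Here the "$\le t-1$ versus $\le t$" distinction exactly captures the constraint at each child, and the selection step enforces the constraint at $v$. For the running time, each vertex is processed in $O(\deg v)$ time using linear-time selection of the $t$-th largest gain, or $\min(t,d)$-th largest if $d<t$. Summing over all vertices gives $O(n)$, independent of $t$. Treating $t$ as a constant would let us avoid selection altogether, but using selection keeps the bound uniform.
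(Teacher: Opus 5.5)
Your dynamic program is correct, and it takes a genuinely different route from the paper.

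The paper uses a greedy algorithm. It roots $T$ at a non-leaf, scans the vertices level by level from the deepest level upward, and adds a vertex whenever the current set stays $t$-frugal independent. Optimality is proved by an exchange argument: each greedy prefix $A_i$ extends to a maximum $t$FI set $B_i$. The extension swaps $a_{i+1}$ in for its parent $p$, for its grandparent, or for a sibling in $B_i\setminus A$, depending on whether $p\in B_i$ and whether $p$ already has $t$ children in $B_i$.

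You instead record the interface of $T_v$ with the rest of the tree in three values $A(v)$, $B_{\le t-1}(v)$, $B_{\le t}(v)$. Your recurrences and the root formula $\max\{A(r),B_{\le t}(r)\}$ are right, and correctness is a routine cut-and-paste induction.

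What each route buys:
\begin{itemize}
\item The greedy is shorter to run and outputs the set directly. Its correctness, however, rests on a delicate exchange argument. It silently uses that any maximum set containing $A_i$ agrees with $A_i$ on all vertices already scanned, which holds because subsets of $t$FI sets are $t$FI.
\item Your DP needs no such insight. It also carries over verbatim to vertex weights, where the greedy fails and your selection step becomes genuinely necessary.
\end{itemize}

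Two small points:
\begin{itemize}
\item Your first bulleted list of states, where $B_0$ leaves the parent ``free'' even when $v$ has $t$ children in $S$, is inconsistent. You immediately replace it by the correct pair $B_{\le t-1}$, $B_{\le t}$, so no harm results.
\item The ``sorted prefix'' alternative costs $O(d\log d)$ per vertex, so it is not linear by itself.
\end{itemize}

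In the unweighted case selection is not needed at all. Deleting $u$ from an optimal $A(u)$-configuration gives a feasible $B_{\le t}(u)$-configuration, so every gain satisfies $g_i\le 1$. Hence $B_{\le k}(v)=\sum_i B_{\le t}(u_i)+\min\{k,|\{i: g_i=1\}|\}$ is obtained by counting.
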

\begin{proof}
The algorithm that provides the proof is based on the following greedy approach.\vspace{-1.5mm} 
\begin{itemize}
\item Root a tree $T$ in a non-leaf vertex $r$ of $T$, and order the vertices in bottom-to-top ordering. More precisely, we start with the vertices at distance ${\rm ecc}_G(r)$ from $r$, and then for any $k\in [{\rm ecc}_G(r)-1]\cup \{0\}$, the vertices at distance $k$ from $r$ appear after the vertices at distance $k+1$ from $r$.\vspace{-1.5mm} 
\item Initially, set $A=\emptyset$. Process the vertices of $T$ in the described order by adding a vertex to the set $A$ whenever it does not violate the condition of $A$ being a $t$-frugal independent set. 
\end{itemize}\vspace{-1.5mm}
{\bf Claim.} $A$ is an $\alpha_t^{f}(T)$-set.\vspace{1mm}\\
{\it Proof \emph{(}of Claim\emph{)}}. Let $A_i$ be the subset of $A$ of the vertices that were added in the first $i$ steps to $A$, where $i\in [|A|]$. (Thus, $A_1$ is a singleton containing the first vertex added to $A$ by the above algorithm.) The proof is by induction on $i$, where we claim that there exists an $\alpha_t^{f}(T)$-set $B_i$ that contains $A_i$ for each $i\ge 1$. The proof of the basis of induction (that $A_1$ belongs to an $\alpha_t^{f}(T)$-set) will be omitted, since it is easy and also very similar to the proof of the inductive step, which we present next.   

Given $i\in[|A|-1]$, the induction hypothesis is that there exists an $\alpha_t^{f}(T)$-set $B_i$ that contains $A_i$. Let $a_{i+1}\in A_{i+1}\setminus A_i$ be the vertex, which is added to $A$ immediately after all vertices of $A_i$ have been added. We may assume that $a_{i+1}\notin B_i$, for otherwise $B_{i+1}=B_i$ yields the desired $\alpha_t^{f}(T)$-set that contains $A_{i+1}$ and we are done. Let $p$ be the parent of $a_{i+1}$, if it exists, and let $C$ be the set of children of $p$. We may assume that $p\notin B_i$, for otherwise $(B_{i}\setminus\{p\})\cup \{a_{i+1}\}$ is a $t$FI set of $T$, which contains $a_{i+1}$ and we are done. (This is because $A$ is built with respect to bottom-to-top order, and so adding $a_{i+1}$ to $A_i$ does not violate the $t$-frugal independence condition in the subtree of $T$ with $a_{i+1}$ as its root.)
Next, assume that $|C\cap B_i|<t$. We infer that $B'=(B_{i}\setminus \{g\})\cup \{a_{i+1}\}$, where $g$ is the parent of $p$ in $T$ if it exists, is a $t$FI set of $T$ of cardinality at least $\alpha_t^{f}(T)$. Clearly, $|B'|=\alpha_t^{f}(T)$, and $B'$ contains $A_{i+1}$. Thus, we may write $B_{i+1}=B'$, and the inductive step is proved. 
Finally, assume that $|C\cap B_i|=t$. Since $A$ is a $t$FI set of $T$ and $a_{i+1}\in C\cap A$, we infer that there exists $c\in (C\setminus A)\cap B_i$. Let $B'=(B_i\setminus \{c\})\cup \{a_{i+1}\}$. Note that $B'$ is a $t$FI set of $T$ that contains $A_{i+1}$, which proves the inductive step by setting $B_{i+1}=B'$. \smallqed

\smallskip 

Noting that the algorithm is clearly linear, the proof is complete.
\end{proof}

\section{General bounds}
\label{sec:generalbounds}

Since the problems of computing $\chi_{t}^{f}$ and $\alpha_{t}^{f}$ are NP-hard, it is desirable to bound their values in terms of several invariants of the graph. Accordingly, we bound these parameters from below and above.

Let $G$ be an $r$-partite graph, where $r\ge 2$, such that for each partite set $X$ and $v\in X$, the vertex $v$ has precisely $t$ neighbors in every other partite set. Let $\Psi_{t}$ be the family of all such graphs $G$. As simple examples note that $C_{2n}\in \Psi_2$ for any $n\ge 2$, and a complete multipartite graph whose parts are of cardinality $t$ belongs to $\Psi_t$. 

\begin{theorem}\label{General1}
If $t$ is a positive integer and $G$ is a graph of size $m$, then
\begin{center}
$\chi_{t}^{f}(G)\geq \dfrac{1}{2}+\sqrt{\dfrac{1}{4}+\dfrac{2m}{t\alpha_{t}^{f}(G)}}$
\end{center}
with equality if and only if $G\in \Psi_{t}$.
\end{theorem}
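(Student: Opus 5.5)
Let $k=\chi_t^f(G)$ and fix a $\chi_t^f(G)$-coloring with color classes $V_1,\dots,V_k$. The idea is to count the edges of $G$ by summing over pairs of color classes. Each $V_i$ is independent, so every edge joins two distinct classes, which gives
\[
m=\sum_{1\le i<j\le k}|[V_i,V_j]|.
\]

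Next, bound each term $|[V_i,V_j]|$. Every vertex of $V_j$ has at most $t$ neighbors in $V_i$, because $V_i$ is a $t$FI set. Hence $|[V_i,V_j]|\le t|V_j|$, and by symmetry also $|[V_i,V_j]|\le t|V_i|$. Each class is itself a $t$-frugal independent set, so $|V_i|\le \alpha_t^f(G)$ for all $i$. Combining these,
\[
|[V_i,V_j]|\le t\min\{|V_i|,|V_j|\}\le t\,\alpha_t^f(G).
\]
Summing over the $\binom{k}{2}$ pairs gives $m\le t\,\alpha_t^f(G)\,k(k-1)/2$. Rewriting this as $k^2-k-2m/(t\alpha_t^f(G))\ge 0$ and taking the positive root of the quadratic yields the stated bound.

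For the equality case, first suppose $G\in\Psi_t$ with partite sets $X_1,\dots,X_r$. Fix $i\ne j$ and double count $[X_i,X_j]$: the edges number $t|X_i|$ from one side and $t|X_j|$ from the other, so all parts have a common size $s$. Each $X_i$ is a $t$FI set, so the parts form a $t$-frugal coloring and $\chi_t^f(G)\le r$. Also $m=\binom r2 ts$.
\begin{itemize}
\item \emph{Lower bound on $\chi_t^f(G)$.} Every vertex has degree $(r-1)t$, so Observation~\ref{ob:lowerboundfor2} gives $\chi_t^f(G)\ge r$. Hence $\chi_t^f(G)=r$.
\item \emph{Upper bound on $\alpha_t^f(G)$.} Let $S$ be a $t$FI set and $v\in X_1$. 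If $v\notin S$, then $v$ has at most $t$ neighbors in $S$. Double counting $[S,X_1]$ (each vertex of $S\setminus X_1$ has exactly $t$ neighbors in $X_1$, and each vertex of $X_1$ outside $S$ has at most $t$ neighbors in $S$) gives $t|S\setminus X_1|\le t|X_1\setminus S|$. So $|S|\le |X_1|=s$, and therefore $\alpha_t^f(G)=s$.
\end{itemize}
Substituting $\chi_t^f(G)=r$, $\alpha_t^f(G)=s$ and $m=\binom r2 ts$ makes the inequality hold with equality.

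Conversely, suppose equality holds. Then every inequality in the chain is tight: $|[V_i,V_j]|=t|V_j|=t|V_i|=t\alpha_t^f(G)$ for all $i<j$. In particular all classes have size $\alpha_t^f(G)$. Since each vertex of $V_j$ has at most $t$ neighbors in $V_i$ and the total is $t|V_j|$, each vertex of $V_j$ has exactly $t$ neighbors in every other class. This holds for all ordered pairs, so $G$ is $k$-partite with parts $V_1,\dots,V_k$ and exactly $t$ neighbors in each other part; that is, $G\in\Psi_t$.

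One degenerate point must be handled: if $k=1$ then $m=0$, and a single class gives no "other partite sets", so the case must be checked against the definition of $\Psi_t$, which requires $r\ge2$. Edgeless graphs give equality with $k=1$, so I will either exclude them (assume $m\ge1$) or note that convention. The main obstacle I expect is the sufficiency direction, specifically showing $\alpha_t^f(G)\le s$ for $G\in\Psi_t$, which is settled by the double-counting argument above.
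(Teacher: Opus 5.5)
Your proof is correct and follows the paper's argument essentially step for step. It uses the same edge count over pairs of color classes for the bound, the same double counting to get $\alpha_t^f(G)=|X_i|$ for $G\in\Psi_t$ (yours avoids the paper's case split, and you get $\chi_t^f(G)\ge r$ from Observation~\ref{ob:lowerboundfor2} rather than from the bound itself), and the same tightness analysis for the converse. Your caveat about edgeless graphs is justified: they attain equality with $\chi_t^f(G)=1$ but are not in $\Psi_t$, and the paper's proof also silently excludes this case.
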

\begin{proof}
The lower bound trivially holds for edgeless graphs. So, we assume that $\chi_{t}^{f}(G)\geq2$. Let $\mathbb{B}=\{B_{1},\ldots,B_{\chi_{t}^{f}(G)}\}$ be a $\chi_{t}^{f}(G)$-coloring. Without loss of generality, we may assume that $|B_{1}|\leq \ldots\leq|B_{\chi_{t}^{f}(G)}|$. Then, 
\begin{equation}\label{Ine}
\begin{array}{lcl}
m&=&\sum_{s=1}^{\chi_{t}^{f}(G)-1}\sum_{t=s+1}^{\chi_{t}^{f}(G)}|[B_{s},B_{t}]|\leq t\sum_{s=1}^{\chi_{t}^{f}(G)-1}|B_{s}|(\chi_{t}^{f}(G)-s)\vspace{1mm}\\
&\leq& t\alpha_{t}^{f}(G)\sum_{s=1}^{\chi_{t}^{f}(G)-1}(\chi_{t}^{f}(G)-s)=t\alpha_{t}^{f}(G)\big{(}\dfrac{\chi_{t}^{f}(G)(\chi_{t}^{f}(G)-1)}{2}\big{)}.
\end{array}
\end{equation}
Therefore, $t\alpha_{t}^{f}(G)\chi_{t}^{f}(G)^{2}-t\alpha_{t}^{f}(G)\chi_{t}^{f}(G)-2m\geq0$. Solving this inequality for $\chi_{t}^{f}(G)$, we obtain the desired lower bound on $\chi_{t}^{f}(G)$.

Now, let us prove that any graph $G\in \Psi_{t}$ attains the lower bound.  Let $X_{1},\ldots,X_{r}$ be the partite sets of $G$. For any two distinct indices $i,j\in[r]$, $|X_{i}|=|X_{j}|$ follows from the fact that every vertex in $X_{i}$ (resp. $X_{j}$) has precisely $t$ neighbors in $X_{j}$ (resp. $X_{i}$). 
By the structure of $G$, each partite set $X_{t}$ is a $t$FI set. This shows that $|X_{t}|\leq \alpha_{t}^{f}(G)$ and that $\chi_{t}^{f}(G)\leq r$. Let $B$ be an $\alpha_{t}^{f}(G)$-set. Suppose to the contrary that $|X_{t}|<\alpha_{t}^{f}(G)$. This in particular implies that $B\nsubseteq X_{t}$. Suppose that $X_{t}\subseteq B$. In such a situation, the strict inequality $|X_{t}|<\alpha_{t}^{f}(G)$ shows that $B$ contains a vertex $v$ from a partite set $X_{j}$ with $j\neq t$. This contradicts the fact that $B$ is an independent set as $v$ has a neighbor in $X_{t}$ by the structure of $G$. Therefore, $X_{t}\nsubseteq B$. Now set $Q=\{(b,x):\, b\in B\setminus X_{t}, x\in X_{t}\setminus B\ \mbox{and}\ bx\in E(G)\}$. Since $B$ is a $t$FI set, every vertex in $X_{t}\setminus B$ is adjacent to at most $t$ vertices in $B\setminus X_{t}$. So, $|Q|\leq t|X_{t}\setminus B|=t|X_{t}|-t|B\cap X_{t}|$. On the other hand, every vertex in $B\setminus X_{t}$ has exactly $t$ neighbors in $X_{t}\setminus B$ by the structure of $G$ and since $B$ is an independent set. This shows that $|Q|=t|B\setminus X_{t}|=t|B|-t|B\cap X_{t}|$. This, together with the last inequality, results in $|B|\leq|X_{t}|$. Therefore, $\alpha_{t}^{f}(G)=|X_{1}|=\ldots=|X_{r}|=|V(G)|/r$. With this in mind, we have
\begin{center} 
$\chi_{t}^{f}(G)\geq \dfrac{1}{2}+\sqrt{\dfrac{1}{4}+\dfrac{2m}{t\alpha_{t}^{f}(G)}}=\dfrac{1}{2}+\sqrt{\dfrac{1}{4}+r^{2}-r}=r$.
\end{center}
This leads to the desired equality.

Conversely, let $G$ be a graph that attains the lower bound. Note that $G$ is a $\chi_{t}^{f}(G)$-partite graph with partite sets $B_{1},\ldots,B_{\chi_{t}^{f}(G)}$. Because the lower bound holds with equality for $G$, it necessarily follows that (\ref{Ine}) holds with equality. By the equality in the second inequality in (\ref{Ine}), we have $|B_{1}|=\ldots=|B_{\chi_{t}^{f}(G)-1}|=\alpha_{t}^{f}(G)$. We also have $|B_{\chi_{t}^{f}(G)}|=\alpha_{t}^{f}(G)$ since $B_{\chi_{t}^{f}(G)}$ is a $t$FI set in $G$ and $|B_{\chi_{t}^{f}(G)}|\geq|B_{\chi_{t}^{f}(G)-1}|$. Moreover, since equality holds in the first inequality in (\ref{Ine}), it follows that for each partite set $B_{i}$ and vertex $v\notin B_{i}$, the vertex $v$ has precisely $t$ neighbors in $B_{i}$. We now infer that $G\in \Psi_{t}$ by taking into account the fact that $\chi_{t}^{f}(G)$ has the same role as $r$ does in the description of the members in $\Psi_{t}$. This completes the proof.
\end{proof}

Given a graph $G$, a vertex $v$ with $\deg_G(v)=1$ is a {\em pendant vertex}, and a neighbor of a pendant vertex is a {\em support vertex} in $G$. A support vertex with only one pendant neighbor is a {\em weak support vertex}, otherwise it is a {\em strong support vertex}.  Let $P(G)$ and $S(G)$ be the sets of pendant vertices and support vertices of a graph $G$. If $V(G)=P(G)\cup S(G)$, then it is readily checked that $\alpha_2^{f}(G)=s+s'$, where $s=|S(G)|$ and $s'$ is the number of strong support vertices in $G$. If $P(G)\cup S(G)\subsetneq V(G)$, we let $\delta^{*}=\delta^{*}(G)=\min\{\deg_{G}(v):\, \textrm{$v$ is neither a pendant vertex nor a support vertex}\}$. Obviously, $\delta^{*}(G)$ is well defined and $\delta^{*}(G)\geq2$.

Before proceeding further, we recall the Erd\H{o}s-Gallai degree sequence characterization. A sequence $d_{1}\geq d_{2}\geq \ldots\geq d_{n}$ consists of the vertex degrees of a simple graph if and only if $\sum_{j=1}^{n}d_{j}$ is even and $\sum_{j=1}^{k}d_{j}\leq k(k-1)+
\sum_{j=k+1}^{n}\min \{k,d_{j}\}$ for $k\in[n]$. 

\begin{theorem}\label{Pendant}
Let $G$ be a graph of order $n\geq2$ with $p$ pendant, $s$ support, and $s'$ strong support vertices. If $V(G)=P(G)\cup S(G)$, then $\alpha_2^{f}(G)=s+s'$. Otherwise,
\begin{center}
$\alpha_2^{f}(G)\leq \dfrac{2(n-p)+(s+s')(\delta^{*}+1)}{\delta^{*}+2}$,
\end{center}
and this bound is sharp.
\end{theorem}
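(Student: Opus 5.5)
The plan is a double-counting (discharging) argument on a maximum $2$FI set, splitting the vertices into pendant vertices, support vertices, and the rest. First, the case $V(G)=P(G)\cup S(G)$ was already observed before the statement: each support vertex contributes itself or at most two of its pendant neighbours, which gives $\alpha_2^f(G)=s+s'$. So assume $R=V(G)\setminus(P(G)\cup S(G))\neq\emptyset$. Every vertex of $R$ then has degree at least $\delta^*$, and none of its neighbours is pendant, since a vertex of $R$ is not a support vertex. Let $B$ be an $\alpha_2^f(G)$-set, and write
\[
B_R=B\cap R,\qquad x_S=|B\cap S(G)|,\qquad x_P=|B\cap P(G)|.
\]
A vertex lying in both $P(G)$ and $S(G)$, as in a $K_2$ component, is counted once, in $P(G)$.

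Next I count the edges $[B_R,\,V(G)\setminus(B\cup P(G))]$. Since $B$ is independent and vertices of $R$ have no pendant neighbours, every edge at a vertex of $B_R$ lands in $V(G)\setminus(B\cup P(G))$. This gives at least $\delta^*|B_R|$ such edges. For the upper count, each vertex $w\notin B\cup P(G)$ has at most $2$ neighbours in $B$. Moreover, if $w$ is a support vertex not in $B$ with $j$ pendant neighbours in $B$, then $w$ has at most $2-j$ neighbours in $B_R$. Summing the capacities, and noting that every pendant vertex in $B$ has its support outside $B$, I get
\[
\delta^*|B_R|\le 2\bigl(n-p-|B_R|-x_S\bigr)-x_P .
\]
Hence $(\delta^*+2)|B_R|\le 2(n-p)-2x_S-x_P$, and therefore
\[
(\delta^*+2)|B|=(\delta^*+2)(|B_R|+x_S+x_P)\le 2(n-p)+\delta^*x_S+(\delta^*+1)x_P .
\]

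The remaining step, which I expect to be the delicate one, is a local charging argument showing $\delta^*x_S+(\delta^*+1)x_P\le(\delta^*+1)(s+s')$. Give each weak support vertex budget $\delta^*+1$ and each strong support vertex budget $2(\delta^*+1)$; these budgets sum to $(\delta^*+1)(s+s')$. Charge each pendant vertex of $B$ to its unique support neighbour, and each support vertex in $B$ to itself. If a support vertex $v$ lies in $B$, none of its pendants are in $B$, so its charge is $\delta^*$, within budget. If $v\notin B$, then at most $\min\{2,\#\text{pendants of }v\}$ of its pendants lie in $B$, since $v$ has at most two $B$-neighbours. Its charge is then at most $(\delta^*+1)$ or $2(\delta^*+1)$ according as $v$ is weak or strong. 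The degenerate case of $K_2$ components, where both ends are pendant and support, needs a separate check: only one end lies in $B$, giving charge $\delta^*+1$ against a total budget of $2(\delta^*+1)$. The main care lies in verifying that the capacity reduction $2-j$ and this charging are consistent when support vertices are adjacent to one another.

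For sharpness I would use graphs without pendant vertices, where the bound reads $2n/(\delta+2)$. For example, $C_4$ gives $\alpha_2^f(C_4)=2=2\cdot4/4$. More generally, one can take graphs in $\Psi_2$, such as complete multipartite graphs with parts of size $2$, where equality holds in every step of the count. One could also attach pendant pairs to such graphs to show sharpness when $p>0$.
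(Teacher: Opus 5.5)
Your argument is correct, apart from a degenerate slip noted below, but it takes a different route from the paper.

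\textbf{How the two routes differ.} The paper first normalizes a maximum $2$FI set $B$ by exchange arguments that use maximality. After this, $B$ contains the unique leaf of each weak support vertex, exactly two leaves of each strong support vertex, and no support vertex. Then $|B\cap P(G)|=s+s'$ exactly, and one double count of $[B,V(G)\setminus B]$ finishes the proof. In that count, chosen leaves send one edge and other vertices of $B$ at least $\delta^*$, while vertices outside $B$ receive at most two edges and unchosen leaves none.

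You skip the normalization. You count only edges leaving $B\cap R$ and absorb $B\cap(P(G)\cup S(G))$ through per-support-vertex budgets. Since you use only independence and $2$-frugality, your inequality holds for every $2$FI set, and you avoid the exchange lemmas altogether. The paper's route instead buys exact structural information about one maximum set, which reduces the final count to a single clean inequality.

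The step you flagged as delicate is actually immediate. Each support vertex's charge is within budget by independence and $2$-frugality alone. Adjacencies among support vertices are irrelevant, because the reduction to $2-j$ only uses that $w$ has at most two neighbours in $B$ in total.

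Your sharpness examples also work. For $G\in\Psi_2$ with $r$ parts, a part is a $2$FI set of size $n/r=2n/(\delta+2)$, which gives sharpness for every even $\delta^*$. The paper instead uses $r$-regular graphs, obtained from a bipartite graph with degrees $r$ and $2$ on its two sides by adding an $(r-2)$-regular graph on the degree-$2$ side.

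\textbf{The slip with $K_2$ components.} If a pendant vertex of $B$ lies in a $K_2$ component, its support is itself pendant. It is therefore not in $V(G)\setminus(B\cup P(G))$, so the reduction by $j$ is unavailable there, and your displayed inequality can fail. For example, take $C_4$ plus a disjoint $K_2$, with $B$ consisting of two opposite vertices of the $C_4$ and one end of the $K_2$; your inequality then reads $4\le 3$.

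The correct count has an extra term $+k$, where $k$ is the number of $K_2$ components meeting $B$. This term is absorbed by the budget $\delta^*+1$ that the endpoint lying in $B$ carries as a support vertex but never uses, so your final bound survives. This case is outside the intended scope anyway: the statement's first assertion already fails for $G=K_2$, where $s+s'=2$ but $\alpha_2^f(K_2)=1$.
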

\begin{proof}
Since the situation when $V(G)=P(G)\cup S(G)$ is straightforward, we may assume that $P(G)\cup S(G)\subsetneq V(G)$. This in particular shows that $n\geq3$. Let $B$ be an $\alpha_2^{f}(G)$-set. Let $u$ be a weak support vertex and $v$ be the unique pendant vertex adjacent to $u$. Assume that $v\notin B$. If $u\in B$, then it has no neighbor in $B$. Therefore, $(B\setminus \{u\})\cup \{v\}$ is also an $\alpha_2^{f}(G)$-set. If $u\notin B$, then it necessarily has precisely two neighbors, say $x$ and $y$, in $B$. In this case, $(B\setminus \{y\})\cup \{v\}$ is again an $\alpha_2^{f}(G)$-set. So, we may assume that all leaves adjacent to weak support vertices belong to $B$.

Now let $u$ be a strong support vertex in $G$. Note by definition that at most two leaves adjacent to $u$ belong to $B$. If $u\in B$, then no neighbor of $u$ is in $B$. In such a situation, $(B\setminus \{u\})\cup \{x,y\}$ is an I$2$F set in $G$, in which $x$ and $y$ are any two leaves adjacent to $u$. This contradicts the maximality of $B$. Therefore, $u\notin B$. Let $P_{u}$ be the set of pendant vertices adjacent to $u$ and $x,y\in P_{u}$. Note that $u$ has exactly two neighbors in $B$, for otherwise $B'=\big{(}B\setminus(N_{G}(u)\big{)}\cup \{x,y\}$ would be an I$2$F set in $G$ of cardinality greater than $|B|$, a contradiction. With this in mind, $B'$ is necessarily an $\alpha_2^{f}(G)$-set. 

Summing up, we have proved that there exists an $\alpha_2^{f}(G)$-set $B$ having\\
$(i)$ the unique pendant vertex in $P_{u}$ for each weak support vertex $u$, and\\
$(ii)$ two pendant vertices in $P_{u}$ for each strong support vertex $u$.\\
In particular, no support vertex belongs to $B$.

Taking the statements $(i)$ and $(ii)$ into account, we have $|B\cap(\cup_{u\in S(G)}P_{u})|=s+s'$, and every vertex in $B\cap(\cup_{u\in S(G)}P_{u})$ has precisely one neighbor in $V(G)\setminus B$. Moreover, every vertex in $B\setminus \big{(}P(G)\cup S(G)\big{)}$ has at least $\delta^{*}$ neighbors in $V(G)\setminus B$. Hence,
\begin{equation}\label{EQ1}
|[B,V(G)\setminus B]|\geq s+s'+(|B|-s-s')\delta^{*}.
\end{equation}

Note that no vertex in $(\cup_{u\in S(G)}P_{u})\setminus B$ has a neighbor in $B$. Moreover, each vertex in $(V(G)\setminus B)\setminus(\cup_{u\in S(G)}P_{u})$ has at most two neighbors in $B$. This leads to 
\begin{equation}\label{EQ2}
|[B,V(G)\setminus B]|\leq2s+2\Big{(}n-|B|-s-\big{(}p-(s+s')\big{)}\Big{)}.
\end{equation}

Together inequalities (\ref{EQ1}) and (\ref{EQ2}) imply that
\begin{equation*}
|B|\leq \dfrac{2(n-p)+(s+s')(\delta^{*}+1)}{\delta^{*}+2}.
\end{equation*}

That the bound is sharp may be seen as follows. Let $r\geq3$ be an integer, and let $G'$ be a bipartite graph with partite sets $X$ and $Y$ such that\\
$\bullet$ $\deg_{G'}(x)=r$ and $\deg_{G'}(y)=2$ for every $x\in X$ and $y\in Y$, and\\
$\bullet$ $r|Y|\equiv0$ (mod $2$).

By the Erd\H{o}s-Gallai degree sequence characterization, we can construct an $(r-2)$-regular graph on the vertices in $Y$. Let $H$ be the resulting $r$-regular graph. It is clear from the construction of $H$ that $X$ is an I$2$F set in $H$. Moreover, $r|X|=2|Y|=2(n(H)-|X|)$. Therefore, $\alpha_2^{f}(H)\geq|X|=2n(H)/(r+2)$. This coincides with the upper bound in the statement of the theorem by taking $P(H)=S(H)=\emptyset$ and $\delta^{*}(H)=r$ into account. Therefore, the upper bound is sharp for $H$.
\end{proof}

The upper bound in Theorem \ref{Pendant} is also sharp for graphs with minimum degree $1$, in particular for trees. The path $P_{2t+1}$, for any integer $t\geq2$, attains the bound as $\chi_{2}^{f}(P_{2t+1})=t+1$ and $\delta^{*}(P_{2t+1})=2$. Moreover, let $T$ be obtained from the star $K_{1,t}$ by subdividing each edge exactly once. It is then easily seen that the upper bound is sharp for $T$ by taking $\chi_{2}^{f}(T)=t+1$ and $\delta^{*}(T)=t$ into account.

Theorem~\ref{General1} provided a lower bound on $\fk(G)$ for any graph $G$. In the next result, we present an upper bound on $\fd(G)$, this time restricted to triangle-free graphs $G$. 

\begin{proposition}\label{TF}
For any triangle-free graph $G$ of order $n$,
\begin{center}
$\chi_{2}^{f}(G)\leq \Big{\lfloor}\dfrac{n-\alpha_2^{f}(G)+4}{2}\Big{\rfloor}$
\end{center}
and this bound is sharp.
\end{proposition}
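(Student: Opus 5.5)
Take a maximum $2$FI set $B$ (an $\alpha_2^f(G)$-set) as one color class. The goal is to color $V(G)\setminus B$, which has $n-\alpha_2^f(G)$ vertices, using at most $\lfloor (n-\alpha_2^f(G))/2\rfloor+1$ further colors, each class being an independent $2$-frugal set. The natural unit is a color class of size at most $2$. A single vertex is always a valid class. A pair $\{x,y\}$ is a valid class precisely when $x$ and $y$ are nonadjacent, since then every vertex has at most two neighbours in it. Frugality of pairs is therefore automatic, and the whole task reduces to pairing up the vertices of $R=V(G)\setminus B$ into nonadjacent pairs. In graph terms, we need a matching in the complement $\overline{G[R]}$ that leaves at most two vertices of $R$ uncovered.

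The count is as follows. If $|R|=r$ and we find $\lfloor r/2\rfloor - 1$ disjoint nonadjacent pairs in $R$, the remaining at most $3$ vertices get their own colors. This gives at most $1+(\lfloor r/2\rfloor -1)+3=\lfloor r/2\rfloor+3$ colors, which is one too many. So I will aim for a near-perfect matching in $\overline{G[R]}$ (at most one vertex left when $r$ is odd, none when $r$ is even), giving $1+\lceil r/2\rceil$ colors. This equals $\lfloor (r+2)/2\rfloor$ for $r$ odd, and is one less than the bound for $r$ even. That leaves slack: in the even case we may leave two vertices unmatched.

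The key step is to use triangle-freeness. In a triangle-free graph $G[R]$, the complement $\overline{G[R]}$ has the property that any three vertices contain a nonadjacent pair in $G$. A maximum matching argument works here. Let $M$ be a maximum matching of $\overline{G[R]}$, and suppose three vertices $u,v,w$ of $R$ are unmatched. Among them some two are nonadjacent in $G$, since $G$ has no triangle, and that pair can be added to $M$, contradicting maximality. Hence at most two vertices remain unmatched. This gives at most $1+|M|+2$ colors with $2|M|\ge r-2$, so $|M|\ge \lceil (r-2)/2\rceil$. The total is $1+|M|+u$, where $u=r-2|M|\in\{0,1,2\}$ and $u\equiv r \pmod 2$. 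This equals $1+(r-u)/2+u=1+(r+u)/2$. For $r$ even and $u\le 2$ it is at most $2+r/2=\lfloor (r+4)/2\rfloor$, and for $r$ odd with $u=1$ it is $(r+3)/2=\lfloor (r+4)/2\rfloor$. Both cases match the claimed bound exactly, so the argument closes without needing $B$ to be maximum except to express the result via $\alpha_2^f$.

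The main obstacle, then, is only sharpness. For this I would try small examples such as $C_5$: there $\alpha_2^f=2$, the bound is $\lfloor 7/2\rfloor=3$, and $\chi_2^f(C_5)=3$. More generally one could use a star $K_{1,n-1}$: there $\alpha_2^f=2$ and $\chi_2^f=\lceil (n-1)/2\rceil+1$, while the bound gives $\lfloor (n+2)/2\rfloor$, and these agree for even $n$. I would present the star family (even $n$) as the sharpness examples, checking $\alpha_2^f(K_{1,n-1})=2$ directly: an independent set containing leaves can have at most two of them because of the center, and the center alone gives $1$.
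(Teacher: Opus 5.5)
Your proof is correct and follows the paper's argument. Both take an $\alpha_2^f(G)$-set as one color class and split the remaining vertices into nonadjacent pairs plus at most two singletons. Your maximum-matching argument in $\overline{G[R]}$ makes explicit the triangle-free pairing step that the paper only asserts. For sharpness the paper uses $K_{2s+1,2t+1}$; your stars are its $s=0$ analogue, and they in fact attain the bound for every order $n\ge 2$, not only even $n$. One harmless slip: for odd $r$, $1+\lceil r/2\rceil$ equals $\lfloor (r+4)/2\rfloor$, not $\lfloor (r+2)/2\rfloor$.
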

\begin{proof}
If $n\leq \alpha_2^{f}(G)+2$, then the upper bound is easily verified. So, we may assume that $n\geq \alpha_2^{f}(G)+3$. Let $B$ be an $\alpha_2^{f}(G)$-set and set $G'=G[V(G)\setminus B]$. Clearly, $G'$ is a triangle-free graph as well. With this in mind and since $|V(G')|\geq3$, once can partition $V(G')$ into $t$ subsets $B_{1},\ldots,B_{t}$ in such a way that
\begin{itemize}
    \item  $B_{i}$ consists of two nonadjacent vertices for each $i\in[t-1]$, and
    \item  $|B_{t}|\in \{1,2\}$.
\end{itemize}
If $|B_{t}|=1$ or $B_{t}$ consists of two nonadjacent vertices, then $\{B,B_{1},\ldots,B_{t}\}$ is a $2$-frugal coloring of $G$. Therefore, $\chi_{2}^{f}(G)\leq2+(n-\alpha_2^{f}(G)-1)/2=(n-\alpha_2^{f}(G)+3)/2$ or $\chi_{2}^{f}(G)\leq1+\big{(}n-\alpha_2^{f}(G)\big{)}/2=(n-\alpha_2^{f}(G)+2)/2$ if $|B_{t}|=1$ or $|B_{t}|=2$, respectively. 

Now, assume that $B_{t}=\{x,y\}$ such that $xy\in E(G)$. In this case, $\mathcal{B}=\big{\{}B,B_{1},\ldots,B_{t-1},\{x\},\{y\}\big{\}}$ is a $2$-frugal coloring of $G$. Thus, $\chi_{2}^{f}(G)\leq3+(n-\alpha_2^{f}(G)-2)/2=(n-\alpha_2^{f}(G)+4)/2$.

In either case, the resulting inequality leads to the desired upper bound. The bound is sharp for the complete bipartite graph $K_{2s+1,2t+1}$, for all positive integers $s$ and $t$, by taking into account the fact that $\big{(}\chi_{2}^{f}(K_{2s+1,2t+1}),|V(K_{2s+1,2t+1})|,\alpha_2^{f}(K_{2s+1,2t+1})\big{)}=(s+t+2,2s+2t+2,2)$.
\end{proof}

Note that the assumption ``being triangle-free'' in Proposition \ref{TF} cannot be removed. For instance, $\chi_{2}^{f}(K_{n})=n>\lfloor(n-\alpha_2^{f}(K_{n})+4)/2\rfloor=\lfloor(n+3)/2\rfloor$ for each $n\geq4$. It should also be noted that the counterpart of Proposition \ref{TF} for the $2$-distance chromatic number does not hold. To see this, consider the star $K_{1,n-1}$ for any integer $n\geq4$. We observe that $\chi_{2}(K_{1,n-1})=n>\lfloor(n-\rho_{2}(K_{1,n-1})+4)/2\rfloor=\lfloor(n+3)/2\rfloor$.

\section{Subcubic graphs}
\label{sec:subcubic}

Unlike graphs with maximum degree $2$ whose frugal colorings are trivial, subcubic graphs (that is, graphs with maximum degree $3$) already bring challenging questions with respect to frugal colorings and frugal independence. Note that for any subcubic graph $G$, $\fk(G)=\chi(G)$ and $\alpha_t^{f}(G)=\alpha(G)$ as soon as $t\ge 3$, hence we will only be interested in $2$-frugal chromatic and $2$-frugal independence numbers of subcubic graphs.

We start with an auxiliary result needed in the main theorem of this section.

\begin{lemma}\label{L3}
If $G$ is a connected graph with $\Delta(G)=3$, which contains a vertex of degree at most $2$, then $\fd(G)\le 5$. 
\end{lemma}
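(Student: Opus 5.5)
A natural approach is a greedy coloring along a suitable vertex ordering, exploiting the low-degree vertex as the last vertex. Let $v_0$ be a vertex of degree at most $2$. Since $G$ is connected, order the vertices $v_1,\ldots,v_n=v_0$ so that they appear in non-increasing distance from $v_0$ (for instance, reverse BFS order from $v_0$). Then every vertex $v_i$ with $i<n$ has a neighbor appearing later, namely its BFS parent. We color the vertices greedily in this order with colors from $[5]$. At each step we maintain the invariant that the partial coloring is proper and $2$-frugal: no vertex, colored or not, has three neighbors of the same color among the already colored vertices.

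When coloring $v_i$ with $i<n$, count the forbidden colors. At most two of its neighbors are already colored, since at least one neighbor (its parent) comes later. These contribute at most $2$ forbidden colors for properness. For frugality, a color $c$ is forbidden if some neighbor $w$ of $v_i$ already has two other neighbors colored $c$. Each neighbor $w$ has degree at most $3$, so besides $v_i$ it has at most $2$ other neighbors. Hence $w$ forbids at most one color, namely the common color of its other two neighbors if they agree. With three neighbors, this gives up to $3$ frugality-forbidden colors, so naively $5$ forbidden colors in total. That is too many, and this is the main obstacle.

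The refinement is to observe that the counts overlap. If $w$ is a neighbor of $v_i$ that is already colored, and both of its other neighbors are colored with color $c$, this is fine. But if $w$ is the later parent and both its other neighbors are colored, those other neighbors are distinct from $v_i$. The real saving comes from pairing the properness constraint with the frugality constraint. For a colored neighbor $w$ of $v_i$, the frugality restriction from $w$ requires $w$'s other two neighbors to be colored. Among those, at most one other neighbor besides $v_i$ might be uncolored. I would argue more carefully as follows. Let $a$ be the number of colored neighbors of $v_i$ (so $a\le 2$), and let $d$ be the number of neighbors $w$ for which both other neighbors of $w$ are colored with the same color. Then the number of forbidden colors is at most $a+d$. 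Since $d\le 3$ and $a\le 2$, we need $a+d\le 4$. This fails only if $a=2$ and $d=3$, meaning $v_i$ has degree $3$ with exactly one uncolored neighbor $p$, and every neighbor, including $p$, is "saturated".

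In this exceptional configuration, I would try a local recoloring or reordering. The two colored neighbors $x,y$ of $v_i$ each already have both other neighbors colored. Where possible, I would choose the ordering (for example, breaking ties in the BFS so that vertices with more later neighbors come later) to reduce such cases. Alternatively, I would note that the color forbidden by $x$ may coincide with the color of $y$ or with the color forbidden by $p$, and show that in the bad case one can swap the color of a neighbor of $x$ to a free color, using the same count for that vertex. I expect this recoloring argument to be the delicate part.

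Finally, color $v_n=v_0$. It has at most $2$ neighbors, so properness forbids at most $2$ colors and frugality at most $2$ more. That leaves at most $4$ forbidden colors, so a fifth color is available. This completes a $2$-frugal $5$-coloring.
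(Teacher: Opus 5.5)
Your overall strategy is the same as the paper's. You color greedily in reverse BFS order from the low-degree vertex $v_0$, so every vertex other than $v_0$ has a later neighbor and $v_0$ is colored last. The paper colors along a spanning tree rooted at the degree-$\le 2$ vertex, children before parents. As written, however, your proof is incomplete. You reduce everything to an ``exceptional configuration'' ($a=2$, $d=3$, with the uncolored parent $p$ saturated). You then only sketch remedies (tie-breaking, local recoloring) and carry none of them out, explicitly leaving it as ``the delicate part.'' That is a genuine hole in the argument.

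The missing observation is that this configuration can never occur, so no recoloring is needed. Let $v_i$ be at distance $k\ge 1$ from $v_0$ and let $p$ be its BFS parent, at distance $k-1$.

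If $p=v_0$, then $\deg(p)\le 2$. So $p$ has at most one neighbor other than $v_i$ and cannot have two other neighbors colored alike.

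If $p\neq v_0$, then $p$ has its own BFS parent at distance $k-2$. In your ordering this vertex comes after all vertices at distance $k$, so it is still uncolored. Hence again $p$ has at most one colored neighbor besides $v_i$.

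The same argument covers any uncolored neighbor $z\neq v_0$ of $v_i$. Since $z$ comes after $v_i$, we have $d(z)\le k$. Its parent lies at distance $d(z)-1\le k-1$, so it is not $v_i$ and, coming after $z$, is still uncolored.

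Consequently only already colored neighbors of $v_i$ can forbid a color via frugality. That gives $d\le a\le 2$, so at most $a+d\le 4$ colors are forbidden. This is exactly the count in the paper's proof: at most two colored neighbors, each contributing its own color plus at most one color repeated on both of its other neighbors. The paper uses the fact that the uncolored parent imposes no constraint only implicitly. With this observation inserted, your argument, including your treatment of $v_0$, is complete.
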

\begin{proof}
Let $G$ be a connected subcubic graph and let $v\in V(G)$ be a vertex with $\deg_G(v)\le 2$. 
We will present a procedure by which all vertices of $G$ will be colored by one of the colors in $[5]$ resulting in a $2$-frugal coloring of $G$.  

Consider a spanning tree $T$ of $G$ and root it at $v$. Proceed with the coloring of the vertices of $G$ in a bottom-to-top order, starting with the leaves of $T$ and assigning a color to a vertex only when all of its children have been colored. Clearly, when a vertex $u$, where $u\ne v$, is being colored, there exists exactly one neighbor of $u$, namely its parent, which has not yet been colored. Hence, we claim that it is possible to color $u$ with one of the five colors while maintaining the property that the partially colored graph is assigned a $2$-frugal coloring. Indeed, there are at most four colors that are forbidden for $u$: the colors given to its (at most two) children $u_1$ and $u_2$, and possibly the colors of the children of $u_i$ if the same color appears on both children of $u_i$, for $i\in [2]$ (again there are at most two such colors). Hence, every vertex $u\in V(G)\setminus\{v\}$ can be colored in the desired way. Finally, since $v$ has at most two children, we can use the same argument as before to color $v$, which results in a $2$-frugal coloring of $G$ using at most $5$ colors. 
\end{proof}

As an immediate consequence of Observation \ref{ob:boundDelta}, we have $\fd(G)\leq7$ for all subcubic graphs. In the next result, we prove that this upper bound can be improved.

\begin{theorem}
\label{boundsDelta=3}
If $G$ is a graph with $\Delta(G)=3$, then $3\le \fd(G)\le 5$.    
\end{theorem}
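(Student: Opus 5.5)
The lower bound is immediate from Observation~\ref{ob:lowerboundfor2}: with $t=2$ and $\Delta(G)=3$ we get $\fd(G)\ge \lceil 3/2\rceil+1=3$. For the upper bound we may assume $G$ is connected, since a $2$-frugal coloring of a disjoint union is obtained componentwise, and components of maximum degree at most $2$ need at most $3$ colors. If some component with $\Delta=3$ has a vertex of degree at most $2$, Lemma~\ref{L3} gives the bound for it. So the whole difficulty is a connected cubic graph $G$, and the plan is to reduce it to the situation of Lemma~\ref{L3}.

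The reduction deletes a suitable small structure $S$, colors $G-S$ with $5$ colors, and then extends the coloring to $S$. In $G-S$ every component still has maximum degree at most $3$ and contains a vertex of degree at most $2$. Each such component is colored by Lemma~\ref{L3}, or with at most $3$ colors if its maximum degree is at most $2$. The most natural choice is $S=\{v\}$ for a single vertex $v$, and I would try this first. Its neighbors $x,y,z$ have degree $2$ in $G-v$, so we can root the spanning trees of their components at them, which keeps some freedom in their colors. To color $v$ at the end, the following must hold:
\begin{itemize}
\item $v$ avoids the colors of $x,y,z$;
\item $v$ avoids any color appearing twice in $N(x)\setminus\{v\}$, and likewise for $y$ and $z$;
\item no color appears three times on $\{x,y,z\}$.
\end{itemize}
The first two conditions can forbid up to $3+3=6$ colors, which is too many. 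The case analysis therefore aims to exploit the choices made while running Lemma~\ref{L3}. Since $x$ is a root of degree $2$ in $G-v$, at least one color is still free for it, so we can recolor $x$ to destroy a repeated pair or to make $x,y,z$ share colors.

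The key step, which I expect to be the main obstacle, is to show that one can always reach a configuration in which the forbidden set for $v$ has size at most $4$. One way is to force two of $x,y,z$ to receive the same color; this is allowed when they are pairwise nonadjacent, and such a repeated color also counts only once among the forbidden colors. Another way is to ensure that at most one of $N(x)\setminus\{v\}$, $N(y)\setminus\{v\}$, $N(z)\setminus\{v\}$ is monochromatic. I would split into cases according to local structure:
\begin{itemize}
\item $v$ lies in a triangle, or $x,y,z$ share neighbors (e.g.\ $K_4$, which needs $4$ colors, or $K_{3,3}$);
\item $G$ has girth at least $4$ near $v$.
\end{itemize}
In the first case one can instead delete a larger set $S$, such as a triangle or an edge $uv$, and extend greedily with a direct count. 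In the second case we color the neighbors of $x$ other than $v$ last within their subtrees, which gives the freedom to make them distinct. The counting in these extension steps, and ensuring that recoloring never breaks frugality elsewhere, requires care but is finite. The small cubic graphs $K_4$ and $K_{3,3}$ can be checked directly.
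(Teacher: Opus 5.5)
There is a genuine gap. Your lower bound, your reduction to connected cubic graphs, and your triangle case are all fine. In the triangle case, delete the triangle $vxy$ and color greedily: the last triangle vertex faces at most $2$ forbidden colors from its outside neighbor plus the $2$ colors of the other two triangle vertices, so five colors suffice. The whole difficulty of the statement is the triangle-free case, and there you only describe the obstacle (up to $3+3=6$ forbidden colors for $v$) without a mechanism that provably saves two colors.

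The freedom you invoke is not there.
\begin{itemize}
\item Lemma~\ref{L3} only guarantees that the root has at least one admissible color, and that may be exactly the color it already received. So nothing can be recolored at $x$.
\item When $G-v$ is connected (e.g.\ whenever $G$ is $2$-connected), only one of $x,y,z$ can be a root. The other two are interior vertices, constrained by both their children and their parent.
\item Coloring the other neighbors $x_1,x_2$ of $x$ last in their subtrees does not let you make them distinct. Each may already face $4$ forbidden colors from its children and grandchildren, and the extra requirement that $x_2$'s color differ from $x_1$'s can remove the last option.
\item Forcing two of $x,y,z$ to share a color runs into the same lack of slack.
\item Your fallback of deleting an edge $uv$ ``with a direct count'' also fails. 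After coloring $G-uv$ by $\varphi$, a new color for $u$ must avoid $\varphi(u_1),\varphi(u_2),\varphi(v)$, a repeated color on $N(u_1)\setminus\{u\}$, a repeated color on $N(u_2)\setminus\{u\}$, and a repeated color on $\{v_1,v_2\}$. That is again up to six, and the 4-cycle situation you group with triangles is not covered by triangle deletion.
\end{itemize}

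This missing step is exactly what the paper's proof consists of. It deletes an edge $uv$, colors $G-uv$ with five colors by Lemma~\ref{L3}, and then runs a long case analysis on the colors of vertices within distance $3$ of $u$. It uses local recolorings, such as exchanging the colors of $u$ and $u_1$, changing $u_1$ and $u_2$ simultaneously, or recoloring $u_{11}$. Each time it obtains either a $5$-coloring of $G$ or a $5$-coloring of $G-uv$ with $\varphi(u)=\varphi(v)$, and the latter is disposed of by its first case. Your proposal needs an argument of comparable substance for triangle-free cubic graphs, for instance an explicit recoloring scheme or a set of reducible configurations. As written, ``requires care but is finite'' stands in for the entire proof of the upper bound.
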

\begin{proof}
Clearly, if $u$ is a vertex of maximum degree $3$ in $G$, then any $2$-frugal coloring assigns at least three colors to the vertices in $N_{G}[u]$. This proves the lower bound.
 
For the proof of the upper bound, it suffices to restrict to cubic graphs $G$ due to Lemma \ref{L3}. We proceed by contradiction and suppose that there exists a cubic graph $G$ such that $\chi_2^{f}(G)\geq6$. Let $uv\in E(G)$ and $G'=G-uv$. By Lemma~\ref{L3}, $G'$ admits a $2$-frugal coloring with $5$ colors. Since $G$ is cubic, we have $\deg_{G'}(u)=\deg_{G'}(v)=2$. For each $w\in\{u,v\}$, let $N_{G'}(w)=\{w_1,w_2\}$. For each $j\in[2]$, let $N_{G'}(w_j)=\{w_{j1},w_{j2},w\}$, where $w_{j1}=w_i$ for some $i\neq j$ if $w_j$ is adjacent to $w_i$. Moreover, we set $N_{G'}(w_{ij})=\{w_{ij1},w_{ij2},w_{i}\}$ for each $i,j\in[2]$. (Note that some of the vertices defined above may coincide. However, this does not affect the arguments in the proof.)
	
Let $\varphi:V(G')\to[5]$ be a $\chi_2^{f}(G')$-coloring. If $\varphi(u)\neq\varphi(v)$ and $|\{\varphi(x_i),\varphi(y)\mid i\in[2]\}|\geq2$ for distinct $x,y\in\{u,v\}$, then $\varphi$ is also a $2$-frugal coloring of $G$, which implies that $\chi_2^{f}(G)\le 5$, a contradiction. In view of this fact, we distinguish two cases.\vspace{1mm}\\
\textit{Case 1.} $\varphi(u)=\varphi(v)=a$ for some $a\in[5]$. In such a situation, we need to distinguish three more possibilities.\vspace{0.75mm}\\
\textit{Subcase 1.1.} $\varphi(u_{11})=\varphi(u_{21})=a$. Since $|[5]\setminus\{a,\varphi(u_1),\varphi(u_2)\}|\ge 2$, it follows that there exists a vertex $b\in[5]\setminus\{a,\varphi(u_1),\varphi(u_2)\}$ such that $|\{b,\varphi(v_1),\varphi(v_2)\}|\ge 2$. We now define $\rho$ by $\rho(u)=b$ and $\rho(x)=\varphi(x)$ for all $x\in V(G)\setminus\{u\}$. Then, $\rho$ is a $2$-frugal coloring of $G$ with five colors, which is a contradiction.\vspace{0.75mm}\\
\textit{Subcase 1.2.} $\varphi(u_{11})=a$ and $a\notin\{\varphi(u_{21}),\varphi(u_{22})\}$. We need to consider two possibilities depending on the behavior of $\varphi(u_{21})$ and $\varphi(u_{22})$.\vspace{0.75mm}\\
\textit{Subcase 1.2.1.} $\varphi(u_{21})=\varphi(u_{22})$. Then, $Q=[5]\setminus\{a,\varphi(u_1),\varphi(u_2),\varphi(u_{21})\}$ has at least one color. If there exists a color $b\in Q$ such that $|\{b,\varphi(v_1),\varphi(v_2)\}|\geq2$, then $\rho(u)=b$ and $\rho(x)=\varphi(x)$ for all $x\in V(G)\setminus\{u\}$ defines a $2$-frugal coloring of $G$ with five colors, a contradiction. Therefore, we may assume $Q=\{b\}$ and $\varphi(v_{1})=\varphi(v_{2})=b$. Note that
\begin{itemize}\vspace{-1.75mm}
\item if $\{\varphi(u_{2i1}),\varphi(u_{2i2})\}\neq \{a\}$ for each $i\in[2]$, then $\rho(u)=\varphi(u_2)$, $\rho(u_{2})=a$ and $\rho(x)=\varphi(x)$ for all $x\in V(G)\setminus\{u,u_2\}$ defines a $2$-frugal coloring of $G$, and\vspace{-1.75mm}
\item if $\{\varphi(u_{2i1}),\varphi(u_{2i2})\}\neq \{b\}$ for each $i\in[2]$, then $\rho(u)=\varphi(u_2)$, $\rho(u_2)=b$ and $\rho(x)=\varphi(x)$ for all $x\in V(G)\setminus\{u,u_2\}$ gives us a $2$-frugal coloring of $G$.
\end{itemize}\vspace{-1.75mm}
In either case, $\rho$ uses five colors. This is a contradiction. Thus, without loss of generality, we may assume that $\varphi(u_{211})=\varphi(u_{212})=a$ and $\varphi(u_{221})=\varphi(u_{222})=b$. In such a situation, the assignment $\rho(u_2)=\varphi(u_1)$, $\rho(u)=\varphi(u_2)$ and $\rho(x)=\varphi(x)$ for all $x\in V(G)\setminus\{u,u_2\}$ gives a $2$-frugal coloring of $G$ using five colors, a contradiction.\vspace{0.75mm}\\
\textit{Subcase 1.2.2.} $\varphi(u_{21})\neq \varphi(u_{22})$. It is then easy to see that there is a color $b\in[5]\setminus \{a,\varphi(u_{1}),\varphi(u_{2})\}$ such that $|\{b,\varphi(v_{1}),\varphi(v_{2})\}|\geq2$. Then, changing the color of $u$ from $a$ to $b$ and keeping the other colors fixed defines a $2$-frugal coloring of $G$ with five colors, which is a contradiction.\vspace{0.75mm}\\
\textit{Subcase 1.3.} $\varphi(u)=a\notin\{\varphi(u_{11}),\varphi(u_{12}),\varphi(u_{21}),\varphi(u_{22})\}$. First, we suppose that $\varphi(u_{ij})=\varphi(u_{it})$ for all $i,j,t\in[2]$ with $j\neq t$. If $\varphi(u_{1i})=\varphi(u_{2i})$ for $i\in[2]$, then $Q=[5]\setminus \{\varphi(u),\varphi(u_{i}),\varphi(u_{i1}),\varphi(u_{i2})\mid i\in[2]\}$ is nonempty. If  there exists a vertex $b\in Q$ such that $|\{b,\varphi(v_1),\varphi(v_2)\}|\geq2$. In such a situation, the assignment $\rho(u)=b$ and $\rho(x)=\varphi(x)$ for all $x\in V(G)\setminus\{u\}$ gives a $2$-frugal coloring of $G$ using five colors, a contradiction. Therefore, $\varphi(v_1)=\varphi(v_2)=b$, where $b$ is the unique member of $Q$. We then observe that
\begin{itemize}\vspace{-1.75mm}
\item if $\{\varphi(u_{1i1}),\varphi(u_{1i2})\}\neq \{a\}$ for each $i\in[2]$, then $\rho(u)=\varphi(u_1)$, $\rho(u_1)=a$ and $\rho(x)=\varphi(x)$ for all $x\in V(G)\setminus\{u,u_1\}$ gives us a $2$-frugal coloring of $G$, and\vspace{-1.75mm}
\item if $\{\varphi(u_{1i1}),\varphi(u_{1i2})\}\neq \{b\}$ for each $i\in[2]$, then $\rho(u)=\varphi(u_1)$, $\rho(u_1)=b$ and $\rho(x)=\varphi(x)$ for any other vertex $x$ defines a $2$-frugal coloring of $G$. 
\end{itemize}\vspace{-1.75mm}
In either case, $\rho$ uses five colors, which is impossible. Therefore, we may assume that $\varphi(u_{11i})=a$ and $\varphi(u_{12i})=b$ for all $i\in[2]$. With this in mind, we define $\rho$ by $\rho(u_1)=\varphi(u_2)$, $\rho(u)=\varphi(u_1)$ and $\rho(x)=\varphi(x)$ for all $x\in V(G)\setminus\{u,u_1\}$. It is then easily checked that $\rho$ is a $2$-frugal coloring of $G$ with five colors, a contradiction.

  Suppose now that $\varphi(u_{1i})\neq\varphi(u_{2i})$ for $i\in[2]$. Let $R=\{a,\varphi(u_i),\varphi(u_{ij})|i,j\in[2]\}$. Clearly, $|R|\geq3$. If $|R|=3$, there exists a color $b\in[5]\setminus R$ such that $|\{b,\varphi(v_{1}),\varphi(v_{2})\}|\geq2$. In such a case, changing  the color of $u$ from $a$ to $b$ and keeping the other colors fixed gives a $2$-frugal coloring of $G$ with five colors, a contradiction. Therefore, $|R|\geq4$. Suppose first that $|R|=4$ and that $b$ is the unique member of $[5]\setminus R$. If 
  $\{\varphi(v_{1}),\varphi(v_{2})\}\ne \{b\}$, then $\rho(u)=b$ and $\rho(x)=\varphi(x)$ for any other vertex $x$ defines a $2$-frugal coloring of $G$ with five colors, which is impossible. Therefore, we may suppose that $\varphi(v_{1})=\varphi(v_{2})=b$.
    Note that at least one of the vertices in $\{u_1, u_2\}$, say, by symmetry $u_1$, receives a color different from $a,\varphi(u_{11})$ and $\varphi(u_{21})$.
We need to differentiate three more possibilities.\vspace{0.75mm}\\
\textit{Subcase 1.3.1.} $\{\varphi(u_{1i1}),\varphi(u_{1i2})\}\neq \{\varphi(u_{2i})\}$ and $\{\varphi(u_{2i1}),\varphi(u_{2i2})\}\neq \{\varphi(u_{1i})\}$ for each $i\in[2]$. Then, the assignment $\big(\rho(u_{1}),\rho(u_{2}),\rho(u)\big)=\big(\varphi(u_{21}),\varphi(u_{11}),\varphi(u_{1})\big)$ and $\rho(x)=\varphi(x)$ for each $x\in V(G)\setminus \{u,u_{1},u_{2}\}$ gives us a $2$-frugal coloring of $G$.\vspace{0.75mm}\\
\textit{Subcase 1.3.2.} $\{\varphi(u_{1i1}),\varphi(u_{1i2})\}=\{\varphi(u_{2i})\}$ for some $i\in[2]$ and $\{\varphi(u_{2i1}),\varphi(u_{2i2})\}\neq \{\varphi(u_{1i})\}$ for each $i\in[2]$. We may assume that $\{\varphi(u_{111}),\varphi(u_{112})\}=\{\varphi(u_{21})\}$. If $\{\varphi(u_{121}),\varphi(u_{122})\}\neq \{b\}$, then $\big(\rho(u_{1}),\rho(u_{2}),\rho(u)\big)=\big(b,\varphi(u_{11}),\varphi(u_{1})\big)$ and $\rho(x)=\varphi(x)$ for any other vertex $x$ defines a $2$-frugal coloring of $G$. Otherwise, $\big(\rho(u_{1}),\rho(u_{2}),\rho(u)\big)=\big(a,\varphi(u_{11}),\varphi(u_{1})\big)$ and $\rho(x)=\varphi(x)$ for any other vertex $x$ defines a $2$-frugal coloring of $G$.\vspace{0.75mm}\\ 
\textit{Subcase 1.3.3.} We may assume, without loss of generality, that $\{\varphi(u_{111}),\varphi(u_{112})\}=\{\varphi(u_{21})\}$ and $\{\varphi(u_{211}),\varphi(u_{212})\}=\{\varphi(u_{11})\}$. We need to consider the following possibilities.
\begin{itemize}\vspace{-1.75mm}
\item $\{\varphi(u_{121}),\varphi(u_{122})\}=\{b\}$ and $\{\varphi(u_{221}),\varphi(u_{222})\}\neq \{b\}$. Then, $\big(\rho(u_{1}),\rho(u_{2}),\rho(u)\big)=\big(a,b,\varphi(u_{1})\big)$ and $\rho(x)=\varphi(x)$ for every $x\in V(G)\setminus \{u,u_{1},u_{2}\}$ defines a $2$-frugal coloring of $G$.\vspace{-1.75mm}
\item $\{\varphi(u_{121}),\varphi(u_{122})\}\neq \{b\}$ and $\{\varphi(u_{221}),\varphi(u_{222})\}\neq \{b\}$. Then, the assignment $\big(\rho(u_{1}),\rho(u_{2}),\rho(u)\big)=\big(b,b,\varphi(u_{1})\big)$ and $\rho(x)=\varphi(x)$ for each $x\in V(G)\setminus \{u,u_{1},u_{2}\}$ gives a $2$-frugal coloring of $G$.\vspace{-1.75mm}
\item $\varphi(u_{121})=\varphi(u_{122})=\varphi(u_{221})=\varphi(u_{222})=b$.
In such a situation, the following possibilities arise.\vspace{0.25mm}\\
$(i)$ $\{\varphi(u_{11i1}),\varphi(u_{11i2})\}\neq \{b\}$ for every $i\in[2]$. Then, $\rho(u_{11})=b$, $\rho(u)=\varphi(u_{11})$ and $\rho(x)=\varphi(x)$ for all $x\in V(G)\setminus \{u,u_{11}\}$ is a $2$-frugal coloring of $G$. \vspace{0.25mm}\\
$(ii)$ $\{\varphi(u_{11i1}),\varphi(u_{11i2})\}\neq \{\varphi(u_{1})\}$ for every $i\in[2]$. In such a case, $\big(\rho(u_{11}),\rho(u_{1}),\rho(u)\big)=\big(\varphi(u_{1}),a,\varphi(u_{11})\big)$ and $\rho(x)=\varphi(x)$ for any other vertex $x$ defines a $2$-frugal coloring of $G$.\vspace{0.25mm}\\
$(iii)$ We may suppose that $\varphi(u_{1111})=\varphi(u_{1112})=b$ and $\varphi(u_{1121})=\varphi(u_{1122})=\varphi(u_{1})$. Then, $\rho(u_{11})=a$, $\rho(u)=\varphi(u_{11})$ and $\rho(x)=\varphi(x)$ for all $x\in V(G)\setminus \{u,u_{11}\}$ gives us a $2$-frugal coloring of $G$.\end{itemize}\vspace{-1.75mm}

In view of the discussion above, we assume that $|R
|=5$. This in particular implies that $\varphi(u_1)\neq\varphi(u_2)$. Thus we may assume without loss of generality, changing the roles of $u_1$ and $u_2$ if necessary, that $|\{\varphi(u_{1}),\varphi(v_{1}),\varphi(v_{2})\}|\geq2$. We observe that 
\begin{itemize}\vspace{-1.75mm}
\item if $\{\varphi(u_{1i1}),\varphi(u_{1i2})\}\neq \{\varphi(u_{2})\}$ for each $i\in[2]$, then $\rho(u)=\varphi(u_1)$, $\rho(u_1)=\varphi(u_2)$ and $\rho(x)=\varphi(x)$ for any other vertex $x$ is a $2$-frugal coloring of $G$, and\vspace{-1.75mm}
\item if $\{\varphi(u_{1i1}),\varphi(u_{1i2})\}\neq \{\varphi(u_{21})\}$ for each $i\in[2]$, then $\rho(u)=\varphi(u_1)$, $\rho(u_1)=\varphi(u_{21})$ and $\rho(x)=\varphi(x)$ for all $x\in V(G)\setminus\{u,u_1\}$ gives us a $2$-frugal coloring of $G$.
\end{itemize}\vspace{-1.75mm}
In either case, $\rho$ assigns five colors to the vertices of $G$, a contradiction. Hence, we may suppose that $\varphi(u_{111})=\varphi(u_{112})=\varphi(u_{2})$ and $\varphi(u_{121})=\varphi(u_{122})=\varphi(u_{21})$. We now define $\rho$ by $\rho(u_1)=a$, $\rho(u)=\varphi(u_1)$ and $\rho(x)=\varphi(x)$ for all $x\in V(G)\setminus\{u,u_1\}$. It is readily checked that $\rho$ is a $2$-frugal coloring of $G$ using five colors, which is again a contradiction.
	
Next, we suppose that $\varphi(u_{11})=\varphi(u_{12})$ and $\varphi(u_{21})\neq\varphi(u_{22})$.  We set $R=[5]\setminus\{a,\varphi(u_1),\varphi(u_2),\varphi(u_{11})\}$. If there is a color $h\in R$ such that $|\{h,\varphi(v_{1}),\varphi(v_{2})\}|\geq2$, then we derive a contradiction as above. Otherwise, we can write $R=\{h\}$ and $\varphi(v_{1})=\varphi(v_{2})=h$. We observe that
\begin{itemize}\vspace{-1.75mm}
\item if $\{\varphi(u_{1i1}),\varphi(u_{1i2})\}\neq \{\varphi(u_{2})\}$ for each $i\in[2]$, then $\rho(u_1)=\varphi(u_2)$, $\rho(u)=\varphi(u_1)$ and $\rho(x)=\varphi(x)$ for any other vertex $x$ is a $2$-frugal coloring of $G$, and\vspace{-1.75mm}
\item if $\{\varphi(u_{1i1}),\varphi(u_{1i2})\}\neq \{h\}$ for each $i\in[2]$, then $\rho(u_1)=h$, $\rho(u)=\varphi(u_1)$ and $\rho(x)=\varphi(x)$ for all $x\in V(G)\setminus \{u,u_{1}\}$ is a $2$-frugal coloring of $G$.
\end{itemize}\vspace{-1.75mm}
In either case, $\rho$ uses five colors, a contradiction. Due to this, we may assume that $\varphi(u_{111})=\varphi(u_{112})=\varphi(u_{2})$ and $\varphi(u_{121})=\varphi(u_{122})=h$. In such a situation, $\rho(u_1)=a$, $\rho(u)=\varphi(u_1)$ and $\rho(x)=\varphi(x)$ for all $x\in V(G)\setminus \{u,u_{1}\}$ is a $2$-frugal coloring of $G$ with five colors, which is again a contradiction.
	
Finally, suppose that $\varphi(u_{11})\neq\varphi(u_{12})$ and $\varphi(u_{21})\neq\varphi(u_{22})$. Choose $h\in[5]\setminus\{\varphi(u),\varphi(u_1),\varphi(u_2)\}$ such that $|\{h,\varphi(v_1),\varphi(v_2)\}|\ge 2$. Then, changing the color of $u$ from $a$ to $h$ and keeping the other colors fixed leads to a $2$-frugal coloring of $G$ using five colors, a contradiction.\vspace{1mm}\\
\textit{Case 2.} $\varphi(u)=\varphi(v_1)=\varphi(v_2)=a$ for some $a\in[5]$. We need to analyze three possibilities.\vspace{0.75mm}\\
\textit{Subcase 2.1.} $\varphi(v)=\varphi(u_1)=\varphi(u_2)=b$ for some $b\in[5]\setminus \{a\}$. Suppose first that $\varphi(w_{11})=\varphi(w_{12})$ and $\varphi(w_{21})=\varphi(w_{22})$ for all $w\in\{u,v\}$. 
Choose a color $\rho(w)\in[5]\setminus\{\varphi(w),\varphi(w_1),\varphi(w_{11}),\varphi(w_{21})\}$ for each $w\in\{u,v\}$ and set $\rho(x)=\varphi(x)$ for all $x\in V(G)\setminus\{u,v\}$. Clearly, $\rho$ is a $2$-frugal coloring of $G'$ with five colors. If $\rho(u)\neq \rho(v)$, it is then easy to check that $\rho$ is a $2$-frugal coloring of $G$ as well, a contradiction. However,  $\rho(u)=\rho(v)$ leads to a contradiction as proved in Case 1. 

We may next suppose that there exists $w\in\{u,v\}$ such that $\varphi(w_{11})\neq\varphi(w_{12})$, say $w=u$. We consider two possibilities depending on $\varphi(u_{21})$ and $\varphi(u_{22})$.\vspace{0.75mm}\\
\textit{Subcase 2.1.1.} $\varphi(u_{21})=\varphi(u_{22})$. Let $[5]\setminus \{a,b,\varphi(u_{21})\}=\{h,h'\}$. If $\{\varphi(v_{i1}),\varphi(v_{i2})\}\neq \{h''\}$ for some $h''\in \{h,h'\}$ and each $i\in[2]$, then $\rho(u)=\rho(v)=h''$ and $\rho(x)=\varphi(x)$ for all $x\in V(G)\setminus \{u,v\}$ is a $2$-frugal coloring of $G'$ with five colors. This, in view of Case 1, results in a contradiction. Hence, we may assume that $\varphi(v_{11})=\varphi(v_{12})=h$ and $\varphi(v_{21})=\varphi(v_{22})=h'$. Let $\{c\}=[5]\setminus \{a,b,h,h'\}$. Then, the assignment $\rho(u)=h$, $\rho(v)=c$ and $\rho(x)=\varphi(x)$ for any other vertex $x$ defines a $2$-frugal coloring of $G$ using five colors, which is impossible.\vspace{0.75mm}\\
\textit{Subcase 2.1.2.} $\varphi(u_{21})\neq \varphi(u_{22})$. Because $|[5]\setminus \{a,b\}|\geq3$, it follows that there is a color $h\in [5]\setminus \{a,b\}$ such that $\{\varphi(v_{i1}),\varphi(v_{i2})\}\neq \{h\}$ for each $i\in[2]$. It is then easy to see that $\rho(u)=\rho(v)=h$ and $\rho(x)=\varphi(x)$ for any other vertex $x$ is a $2$-frugal coloring of $G'$ with five colors, which leads to a contradiction by Case 1.\vspace{0.75mm}\\
\textit{Subcase 2.2.} $\varphi(v)=\varphi(u_1)=b$ and $\varphi(v)\neq\varphi(u_2)=c$. Suppose first that $\varphi(u_{i1})\neq \varphi(u_{i2})$ for some $i\in[2]$, say $\varphi(u_{21})\neq \varphi(u_{22})$ and let $h\in[5]\setminus \{a,b,c,\varphi(u_{11})\}$. If $\{\varphi(v_{i1}),\varphi(v_{i2})\}\neq \{h\}$ for each $i\in[2]$, then the assignment $\rho(u)=\rho(v)=h$ and $\rho(x)=\varphi(x)$ for each $x\in V(G)\setminus \{u,v\}$ is a $2$-frugal coloring of $G'$ with five colors, which is a contradiction in view of Case 1. Therefore, we may assume that $\varphi(v_{11})=\varphi(v_{12})=h$. Since $|[5]\setminus \{a,b,h\}|\geq2$, there exists a color $h'\in[5]\setminus \{a,b,h\}$ such that $\{\varphi(v_{21}),\varphi(v_{22})\}\neq \{h'\}$. In such a situation, $\rho(u)=h$, $\rho(v)=h'$ and $\rho(x)=\varphi(x)$ for every $x\in V(G)\setminus \{u,v\}$ gives us a $2$-frugal coloring of $G$ using five colors, a contradiction. Therefore, $\varphi(u_{i1})=\varphi(u_{i2})$ for each $i\in[2]$. If $\varphi(u_{21})\neq h$, then assigning the color $h$ to $u$ and keeping the other colors fixed results in a $2$-frugal coloring of $G$ with five colors, which is impossible. So, $\varphi(u_{21})=\varphi(u_{22})=h$. Note that if there exists a color $h'\in [5]\setminus \{a,b,c,\varphi(u_{11})\}$, where $h'\ne h$, then $\rho(u)=h'$ and $\rho(x)=\varphi(x)$ for each $x\in V(G)\setminus \{u\}$ is a $2$-frugal coloring of $G$ using five colors, which is impossible. Therefore, $[5]\setminus \{a,b,c,\varphi(u_{11})\}=\{h\}$. We now differentiate the following cases.
\begin{itemize}\vspace{-1.75mm}
\item If $\{\varphi(u_{1i1}),\varphi(u_{1i2})\}\neq \{c\}$ for each $i\in[2]$, then $\rho(u_{1})=c$, $\rho(u)=b$ and $\rho(x)=\varphi(x)$ for each $x\in V(G)\setminus \{u,u_{1}\}$ is a $2$-frugal coloring of $G'$ using five colors.\vspace{-1.75mm}
\item If $\{\varphi(u_{1i1}),\varphi(u_{1i2})\}\neq \{h\}$ for each $i\in[2]$, then $\rho(u_{1})=h$, $\rho(u)=b$ and $\rho(x)=\varphi(x)$ for each $x\in V(G)\setminus \{u,u_{1}\}$ is a $2$-frugal coloring of $G'$ using five colors.
\end{itemize}\vspace{-1.75mm}
Taking Case 1 into account, either of the above cases leads to a contradiction. Hence, we may suppose without loss of generality that $\varphi(u_{111})=\varphi(u_{112})=c$ and $\varphi(u_{121})=\varphi(u_{122})=h$. In such a situation, $\rho(u)=b$, $\rho(u_{1})=a$ and $\rho(x)=\varphi(x)$ for each $x\in V(G)\setminus \{u,u_{1}\}$ gives us a $2$-frugal coloring of $G'$ using five colors such that $\rho(u)=\varphi(v)$. This is a contradiction due to Case 1.\vspace{0.75mm}\\ 
\textit{Subcase 2.3.} $b=\varphi(v)\neq\varphi(u_1)=c$ and $b=\varphi(v)\neq\varphi(u_2)=d$. If $\{\varphi(u_{i1}),\varphi(u_{i2})\}\neq \{b\}$ for each $i\in[2]$, then $\rho(u)=b$ and $\rho(x)=\varphi(x)$ for any other vertex $x$ is a $2$-frugal coloring of $G'$ with five colors. This is impossible in view of Case 1. So, we may assume that $\varphi(u_{11})=\varphi(u_{12})=b$ and set $Q=[5]\setminus \{a,b,c,d\}$. If $|Q|\geq2$, then there is a color $h\in Q$ such that $\{\varphi(u_{21}),\varphi(u_{22})\}\neq \{h\}$. In such a case, reassigning the color $h$ to $u$ and keeping the other colors fixed gives a $2$-frugal coloring of $G$ with five colors, a contradiction. Hence, we can write $Q=\{f\}$. If  $\{\varphi(u_{21}),\varphi(u_{22})\}\neq \{f\}$. In such a case, reassigning the color $f$ to $u$ and keeping the other colors fixed gives a $2$-frugal coloring of $G$ with five colors, a contradiction. Hence,  $\varphi(u_{21})=\varphi(u_{22})=f$. Finally, we need to distinguish the following possibilities.
\begin{itemize}\vspace{-1.75mm}
\item $\{\varphi(u_{1i1}),\varphi(u_{1i2})\}\neq \{a\}$ for each $i\in[2]$, then $\rho(u)=c$, $\rho(u_{1})=a$ and $\rho(x)=\varphi(x)$ for any other vertex $x$ is a $2$-frugal coloring of $G$.\vspace{-1.75mm}
\item $\{\varphi(u_{1i1}),\varphi(u_{1i2})\}\neq \{f\}$ for each $i\in[2]$, then $\rho(u)=c$, $\rho(u_{1})=f$ and $\rho(x)=\varphi(x)$ for each $x\in V(G)\setminus \{u,u_{1}\}$ is a $2$-frugal coloring of $G$.
\end{itemize}\vspace{-1.75mm}
In either case above, $\rho$ uses five colors, a contradiction. Therefore, we may assume that $\varphi(u_{111})=\varphi(u_{112})=a$ and $\varphi(u_{121})=\varphi(u_{122})=f$. In such a situation,  $\rho(u)=c$, $\rho(u_{1})=d$ and $\rho(x)=\varphi(x)$ for any other vertex $x$ defines a $2$-frugal coloring of $G$ with five colors, which is impossible. This completes the proof.
\end{proof}

We wonder if the upper bound in Theorem~\ref{boundsDelta=3} can be improved, and pose it as an open problem. Clearly, there are cubic graphs $G$ with $\fd(G)=4$, which are for instance the graphs $G$ of order $8$ with $\alpha_2^{f}(G)=n/4$ (see the proof of Proposition~\ref{Cubic}). Therefore, the sharp upper bound on $\fd$ for cubic graphs lies between $4$ and $5$. 

In the next result we prove that the bound from Theorem~\ref{boundsDelta=3} can be improved if a subcubic graph is claw-free. For this purpose, recall Brooks' theorem stating that $\chi(G)\le \Delta(G)$  holds for any connected graph, which is not a complete graph nor an odd cycle; see~\cite{West}.
\begin{proposition}
\label{prp:clawfree}
If $G$ is a connected claw-free cubic graph not isomorphic to $K_4$, then $\fd(G)=3$.
\end{proposition}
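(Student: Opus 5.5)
The lower bound $\fd(G)\ge 3$ is immediate from Theorem~\ref{boundsDelta=3} (or Observation~\ref{ob:lowerboundfor2}, since $\lceil 3/2\rceil+1=3$). For the upper bound, Brooks' theorem gives a proper $3$-coloring $c$ of $G$, because $G$ is connected, cubic, and not $K_4$ (and not an odd cycle). My plan is to show that in a claw-free cubic graph every proper $3$-coloring is automatically $2$-frugal.

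The key structural fact is the following. Let $v$ be any vertex with neighbors $x,y,z$. Since $G$ is claw-free, $\{x,y,z\}$ is not independent, so $N(v)$ contains at least one edge, say $xy$. Then $x$ and $y$ receive distinct colors under $c$. Hence at most two of the three neighbors of $v$ share a color, and no color appears three times in $N(v)$. This is exactly $2$-frugality at $v$, because $v$ has degree $3$ and a color class could only violate the condition by meeting $N(v)$ in all three vertices. Therefore $c$ is a $2$-frugal coloring with $3$ colors, and $\fd(G)\le 3$.

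There is essentially no obstacle here. The only point needing care is checking that Brooks' theorem applies: a connected cubic graph is never an odd cycle, and $K_4$ is excluded by hypothesis, so $\chi(G)\le 3$.
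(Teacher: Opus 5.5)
Your proof is correct and follows the paper's argument essentially verbatim: Brooks' theorem gives a proper $3$-coloring, and claw-freeness forces an edge in every neighborhood, so no color can appear three times in any $N(v)$. The only cosmetic difference is the lower bound, where the paper notes that $G$ contains a triangle (so $\chi(G)\ge 3$), whereas you use $\lceil \Delta(G)/2\rceil+1=3$; both are immediate.
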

\begin{proof}
By Brooks' theorem, $\chi(G)\le 3$ holds for every connected cubic graph $G$ different from $K_4$. Consider a proper $3$-coloring $f$ of a claw-free cubic graph $G$, different from $K_4$, and let $u\in V(G)$. Since $G$ is claw-free, there are two vertices $v,w$ in $N_G(u)$ that are adjacent. Therefore, $f(v)\ne f(w)$, which readily implies that $f$ is a $2$-frugal coloring of $G$. Thus, $\fd(G)\le 3$. Since $G$ contains a triangle, $\fd(G)\ge 3$.  
\end{proof}

 As an immediate consequence of Theorem \ref{Pendant}, we have the upper bound $2n/5$ for the $2$FI number of any cubic graph of order $n$. Moreover, the bound is achieved by the $r$-regular graphs constructed in the proof of Theorem \ref{Pendant} with $r=3$.

\begin{proposition}\label{Cubic}
For any cubic graph $G$ of order $n$,
\begin{center}
$\dfrac{n}{4}\leq \alpha_2^{f}(G)\leq \dfrac{2n}{5}$.
\end{center}
These bounds are sharp.
\end{proposition}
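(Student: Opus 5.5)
The upper bound is already available. For a cubic graph $G$ we have $P(G)=S(G)=\emptyset$ and $\delta^{*}=3$, so Theorem~\ref{Pendant} gives $\alpha_2^{f}(G)\le 2n/5$. The construction in the proof of Theorem~\ref{Pendant} with $r=3$ shows this is sharp: take $X$ with all degrees $3$ into $Y$, every vertex of $Y$ having exactly two neighbours in $X$, and add a perfect matching on $Y$. The sharpness example can be made concrete with $|X|=2$ and $|Y|=3$, which gives $K_{2,3}$ plus edges on $Y$. A perfect matching on three vertices is impossible, so instead I would take $|X|=4$ and $|Y|=6$. The resulting graph on $10$ vertices has $\alpha_2^f=4=2n/5$. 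So only the lower bound and its sharpness need real work.

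For the lower bound I would take a maximal $2$FI set, or more conveniently a maximum one, $B$, and show that every vertex outside $B$ is ``blocked''. Let $x\notin B$. The set $B\cup\{x\}$ fails to be a $2$FI set for one of two reasons:
\begin{itemize}
\item[(a)] $x$ has a neighbour in $B$;
\item[(b)] some neighbour $y$ of $x$ already has two neighbours in $B$, so adding $x$ would give $y$ three.
\end{itemize}
In both cases $x$ lies at distance at most $2$ from $B$ via a controlled structure. The goal is the counting inequality $n-|B|\le 3|B|$, i.e.\ each vertex of $B$ is charged with at most three outside vertices. I would use a discharging argument. A vertex $x$ of type (a) sends charge to its neighbours in $B$, and a vertex $y$ has at most two neighbours in $B$. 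A vertex $x$ of type (b) (and not of type (a)) has a ``saturated'' neighbour $y\notin B$ with exactly two neighbours $b_1,b_2\in B$. Then $x$ sends $1/2$ to each of $b_1$ and $b_2$, and $y$ itself is of type (a) and sends $1/2$ to each.

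\textbf{Counting the charge.} I would bound the charge a vertex $b\in B$ receives. It has three neighbours, all outside $B$. Each neighbour $y$ sends at most $1$ to $b$, or $1/2$ if $y$ is saturated. A saturated neighbour $y$ has exactly one further neighbour $x$, which may send $1/2$ to $b$. So every neighbour $y$ of $b$ accounts for at most $1$ unit, and $b$ receives at most $3$. This yields $n-|B|\le 3|B|$, i.e.\ $|B|\ge n/4$.

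\textbf{Main obstacle.} The delicate step is that a type (b) vertex $x$ may have several saturated neighbours, and a type (a) vertex with one neighbour in $B$ sends $1$ but may also be the neighbour through which other charge flows. I would fix the rules as follows:
\begin{itemize}
\item a vertex with exactly one $B$-neighbour sends $1$ to it;
\item a vertex with exactly two $B$-neighbours sends $1/2$ to each;
\item a vertex with no $B$-neighbour chooses one saturated neighbour $y$ and sends $1/2$ to each of $y$'s two $B$-neighbours.
\end{itemize}
Then for $b\in B$ and a neighbour $y$ of $b$: if $y$ has one $B$-neighbour, $b$ gets $1$ from $y$ and nothing passes through $y$, since $y$ is not saturated. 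If $y$ is saturated, $b$ gets $1/2$ from $y$ plus at most $1/2$ from $y$'s unique remaining neighbour. The total is at most $3$, which confirms the count above.

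\textbf{Sharpness of the lower bound.} The remark after Theorem~\ref{boundsDelta=3} suggests cubic graphs of order $8$ with $\alpha_2^f=2$. The natural candidate is $K_{3,3}$ with $n=6$, but $\alpha_2^f(K_{3,3})=2\neq 6/4$. So I would search among order-$8$ cubic graphs, for example the cube $Q_3$ or the Wagner graph. I would verify directly that no three vertices form a $2$FI set, checking independence and the condition that no vertex has all three of its neighbours in the set. This case check is routine but must be done carefully, and I expect finding the right example to be the remaining effort.
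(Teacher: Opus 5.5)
Your argument is correct and essentially the paper's. The upper bound and its sharpness come from Theorem~\ref{Pendant} and its construction with $r=3$; your $10$-vertex instance with $|X|=4$, $|Y|=6$ is a valid realization. Your discharging is a per-vertex form of the paper's count. Each vertex of $B_0$ has a neighbour in $B_2$, and each vertex of $B_2$ has at most one neighbour in $B_0$, so $|B_0|\le |B_2|$. Hence $n-|B|\le |B_1|+2|B_2|=|[B,V(G)\setminus B]|=3|B|$.

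The one unfinished piece is the sharpness of the lower bound, which you leave as a search. Both of your candidates work, and the paper also lists them, together with $K_4$, where trivially $\alpha_2^{f}(K_4)=1=n/4$.

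For $Q_3$ the check is short:
\begin{itemize}
\item Each vertex of one colour class is adjacent to exactly three of the four vertices of the other class. So any three vertices of one class have a common neighbour, namely the antipode of the fourth vertex.
\item An independent $3$-set cannot meet both classes, because each vertex is non-adjacent to only one vertex of the opposite class.
\end{itemize}
Hence $\alpha_2^{f}(Q_3)=2=8/4$. A similar check, using vertex-transitivity, shows that every independent $3$-set of the Wagner graph has a vertex adjacent to all three of its members.
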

\begin{proof}
Let $B$ be an $\alpha_2^{f}(G)$-set. We set $B_{0}=\{v\in V(G)\setminus B:\, |N_{G}(v)\cap B|=0\}$, $B_{1}=\{v\in V(G)\setminus B:\, |N_{G}(v)\cap B|=1\}$ and $B_{2}=\{v\in V(G)\setminus B:\, |N_{G}(v)\cap B|=2\}$. It is clear that $B_{0}$, $B_{1}$ and $B_{2}$ are pairwise disjoint and that $V(G)\setminus B=B_{0}\cup B_{1}\cup B_{2}$.

By the definition of $B_{0}$ and since $B$ is an $\alpha_2^{f}(G)$-set, it follows that every vertex in $B_{0}$ has at least one neighbor in $B_{2}$. On the other hand, every vertex in $B_{2}$ has at most one neighbor in $B_{0}$. Therefore, $|B_{0}|\leq|[B_{0},B_{2}]|\leq|B_{2}|$. In view of this, and since $|V(G)\setminus B|=|B_{0}|+|B_{1}|+|B_{2}|$, we infer that 
\begin{center}
$n-|B|=|V(G)\setminus B|\leq|B_{1}|+2|B_{2}|=|[B,V(G)\setminus B]|=3|B|$.
\end{center} 
This results in the desired lower bound. The bound is sharp for some cubic graphs such as $K_{4}$, the hypercube $Q_{3}$, the twisted cube and the graph obtained from the cycle $C_{8}$ by adding the chords between four antipodal vertices.
\end{proof}



\section{Nordhaus-Gaddum type inequalities for $\chi_{2}^{f}$}
\label{sec:NG}

In 1956, Nordhaus and Gaddum presented lower and upper bounds on the sum
and product of the chromatic numbers of a graph and its complement in terms of
the order~\cite{NoGa}. From then on, inequalities bounding $\eta(G)+\eta(\overline{G})$ and $\eta(G)\eta(\overline{G})$ are called \textit{Nordhaus-Gaddum inequalities}, where $\eta$ is any graph parameter. For comprehensive information about this subject up to 2013, the reader can consult~\cite{NGAH}.

In the statement of the following theorem, three graphs $G_1$, $G_2$ and $G_3$ appear, each of which is $4$-regular with $9$ vertices; see Figure~\ref{fig:3graphs}.

\begin{theorem}\label{N-G}
For any graph $G\notin \{G_{j},\overline{G_{j}}\}_{j=1}^{3}$ of order $n\geq2$, 
\begin{center}
$\chi_{2}^{f}(G)+\chi_{2}^{f}(\overline{G})\geq \dfrac{n}{2}+2$.
\end{center} 
Moreover, the bound is sharp.
\end{theorem}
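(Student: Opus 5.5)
We will derive the inequality from the degree lower bound of Observation~\ref{ob:lowerboundfor2} applied to both $G$ and $\overline{G}$. Write $\Delta=\Delta(G)$ and $\delta=\delta(G)$, so that $\Delta(\overline{G})=n-1-\delta$. Observation~\ref{ob:lowerboundfor2} with $t=2$ gives
\begin{center}
$\chi_{2}^{f}(G)+\chi_{2}^{f}(\overline{G})\geq \big\lceil\frac{\Delta}{2}\big\rceil+\big\lceil\frac{n-1-\delta}{2}\big\rceil+2\geq \frac{n-1+(\Delta-\delta)}{2}+2 .$
\end{center}
If $G$ is not regular, then $\Delta-\delta\geq1$ and the bound $n/2+2$ follows at once. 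If $G$ is $r$-regular and either $r$ or $n-1-r$ is odd, then one of the two ceilings gains an extra $1/2$, and again we get at least $n/2+2$. The only remaining situation is that $G$ is $r$-regular with $r$ even and $n-1-r$ even. In particular $n$ is odd, and the degree bound yields only $(n-1)/2+2$.

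For this regular case we will look for one more color in $G$ or in $\overline{G}$, which gives $(n+1)/2+2$. Suppose that $\chi_{2}^{f}(G)=r/2+1$ exactly. Then the counting in~(\ref{Delta}) is tight at every vertex: if $v$ lies in class $J$, then $v$ has exactly two neighbors in each of the other $r/2$ classes. It follows that every pair of classes induces a $2$-regular bipartite graph, so all classes have equal size, and hence $r/2+1$ divides $n$. We now apply the same reasoning to $\overline{G}$, which is $(n-1-r)$-regular. If both bounds were tight, then $(r/2+1)\mid n$ and $((n-1-r)/2+1)\mid n$.

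Set $a=r/2+1$ and $b=(n+1-r)/2$. Then $a+b=(n+3)/2$, and both $a$ and $b$ divide the odd number $n$. We must also use $\chi_{2}^{f}\ge\chi$ and the structure of the classes. In $G$, each class $X$ of size $n/a$ is independent, so $X$ is a clique in $\overline{G}$. In a $2$-frugal coloring of $\overline{G}$, the vertices of $X$ must receive distinct colors. Moreover, every vertex outside $X$ has exactly two $G$-neighbors in each other class, so in $\overline{G}$ it has $n/a-2$ neighbors in $X$. These constraints are very restrictive.

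Write $n=ak=bl$. The relation $a+b=(n+3)/2$ gives $2(a+b)=n+3$, and since both $a$ and $b$ divide $n$, only a few small solutions survive. One of them is $n=9$ with $a=b=3$, i.e.\ $r=4$. This is exactly the case of $4$-regular graphs on $9$ vertices whose vertex set splits into three independent triples, each pair inducing a $6$-cycle, and whose complement has the same structure. There we expect, by case analysis, to identify precisely $G_1,G_2,G_3$ and their complements as the exceptions.

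The main obstacle is the number-theoretic and structural elimination of the other solutions: showing that $2(a+b)=n+3$ with $a,b\mid n$, $n$ odd, forces $n=9$ or a trivially small $n$. The trivial cases would be handled directly; for instance $n=3$ gives $K_3$ or its complement, and there the bound holds. We would then need the finite classification at $n=9$.

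Sharpness will be shown by graphs attaining the degree bound in both $G$ and $\overline{G}$, for example $G=C_{2k}$-type members of $\Psi_2$ chosen so that both $G$ and $\overline{G}$ reach $\lceil\Delta/2\rceil+1$. The first candidate to check is small even $n$, such as $C_4$: there $\chi_2^f(C_4)+\chi_2^f(2K_2)=2+2=4=n/2+2$.
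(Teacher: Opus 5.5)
\paragraph{The gap: the divisibility step.} Your reduction to the doubly tight regular case is correct. If the sum equals $(n+3)/2$, both colorings are tight, the classes of $G$ have size $k=n/a$, and the classes of the tight coloring of $\overline{G}$ have size $l=n/b$. The failure is in the step you single out as the main obstacle. The claim that $2(a+b)=n+3$ with $a,b\mid n$ and $n$ odd forces $n=9$ is false. Writing $n=ak=bl$ turns the relation into $n(2/k+2/l-1)=3$, with $k,l$ odd and at least $3$ (since $k=1$ would mean $a=n$, i.e.\ $r=2n-2>n-1$). The condition $2/k+2/l>1$ then leaves two possibilities:
\begin{itemize}
\item $\{k,l\}=\{3,3\}$, giving $n=9$;
\item $\{k,l\}=\{3,5\}$, giving $n=45$ with $(a,b)=(15,9)$ or $(9,15)$.
\end{itemize}
Indeed $15\mid 45$, $9\mid 45$ and $2(15+9)=48=45+3$. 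This case describes a $28$-regular $G$ whose tight $15$-coloring has triples as classes, and whose $16$-regular complement has a tight $9$-coloring into $5$-sets (or the reverse). Divisibility cannot exclude it. You do list the relevant facts: classes are cliques in the complement, an outside vertex has $n/a-2$ complement-neighbors in a class, and $\chi_2^f\ge\chi$. But you never turn them into an argument that rules this case out.

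\paragraph{The missing idea: the paper's class-size argument.} Let the classes $J_i$ of a tight $\chi_2^f(G)$-coloring have size $s\ge 5$, and let $v_1,v_2,v_3$ be pairwise nonadjacent in $\overline{G}$. Each $J_i$ is a clique of $\overline{G}$, so the three vertices lie in distinct classes, say $J_1,J_2,J_3$. In $\overline{G}$ the vertex $v_1$ misses exactly two vertices of $J_2$, one of which is $v_2$, and the same holds for $v_3$. Since $s\ge5$, some $x\in J_2\setminus\{v_2\}$ is adjacent in $\overline{G}$ to $v_1$, to $v_3$, and (through the clique $J_2$) to $v_2$. Hence every color class of a $2$-frugal coloring of $\overline{G}$ has at most two vertices. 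So $\chi_2^f(\overline{G})\ge n/2$, and the sum exceeds $(n+3)/2$, a contradiction. Therefore $s=3$, and the same argument applied to $\overline{G}$ gives $l=3$. Then $a=b=n/3$ together with $a+b=(n+3)/2$ forces $n=9$ directly, so the divisibility analysis becomes unnecessary. At $n=45$, the $5$-sets of $\overline{G}$ would force every $2$-frugal class of $G$ to have at most two vertices, contradicting the triples. With this lemma added, the rest of your plan goes through, including the $n=9$ classification (which the paper also leaves as a routine check). For sharpness, $C_4$ is a valid instance. The paper, however, covers every even $n\ge4$ using $C_n$, coloring $\overline{C_n}$ with the $n/2$ pairs $\{v_{2j-1},v_{2j}\}$.
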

\begin{proof}
Let $G$ be a graph, and $\mathcal{J}$ be the color classes of a $\chi_{2}^{f}(G)$-coloring. By Observation~\ref{ob:lowerboundfor2} for $k=2$, we infer that
\begin{equation*}\label{NG1}
\chi_{2}^{f}(G)\geq \Delta(G)/2+1
\end{equation*}
for any graph $G$ on at least two vertices, and we deduce that
\begin{equation}\label{NG2}
\chi_{2}^{f}(G)+\chi_{2}^{f}(\overline{G})\geq \dfrac{\Delta(G)}{2}+1+\dfrac{\Delta(\overline{G})}{2}+1=\dfrac{\Delta(G)+n-1-\delta(G)}{2}+2\geq \dfrac{n+3}{2}.
\end{equation}

Now, let $G$ be a graph on at least two vertices for which $\chi_{2}^{f}(G)+\chi_{2}^{f}(\overline{G})=(n+3)/2$. In particular, $n$ is an odd integer. Moreover, $G$ is a regular graph since the second inequality in (\ref{NG2}) necessarily holds with equality. In view of the inequality (\ref{Delta}) with $k=2$, the resulting equality $\chi_{2}^{f}(G)=\Delta(G)/2+1$ from (\ref{NG2}) implies that every vertex in $J$ has precisely two neighbors in $J'$ for each $J\in \mathcal{J}$ and $J'\in \mathcal{J}\setminus \{J\}$. Due to this, letting $\mathcal{J}=\{J_{1},\ldots,J_{\chi_{2}^{f}(G)}\}$, we infer that $|J_{1}|=\ldots=|J_{\chi_{2}^{f}(G)}|$. On the other hand, since $\mathcal{J}$ is a $2$-frugal coloring of $G$, it follows that $J_{j}$ is a clique in $\overline{G}$ for each $j\in[\chi_{2}^{f}(G)]$. Hence, each color class of any $\chi_{2}^{f}(\overline{G})$-coloring has at most one vertex from each $J_{j}$.

For the sake of simplicity, we let $t=\chi_{2}^{f}(G)$ and $r=|J_{1}|$. Because $n$ is odd, both $r$ and $t$ are odd as well. If $t=1$, then $G\cong \overline{K_{n}}$. This leads to $\chi_{2}^{f}(K_{n})+\chi_{2}^{f}(\overline{K_{n}})\geq n+1>(n+3)/2$, a contradiction. Therefore, $t\geq3$. Moreover, $r\geq3$ because $r$ is odd and every vertex in $J_{1}$ is adjacent to precisely two vertices in $J_{2}$ in the graph $G$. Suppose now that $r\geq5$. Let $\{v_{1},v_{2},v_{3}\}$ be any independent set of cardinality $3$ in $\overline{G}$. Due to the structure of $\overline{G}$, we may assume that $v_{1}\in J_{1}$, $v_{2}\in J_{2}$ and $v_{3}\in J_{3}$. Recall that in the graph $\overline{G}$, every vertex in $J_{j}$ is adjacent to precisely $r-2$ vertices of each set in $\mathcal{J}\setminus \{J_{j}\}$. With this in mind, we have $J_{2}\setminus \{v_{2},u\}\subseteq N_{\overline{G}}(v_{1})$ and $J_{2}\setminus \{v_{2},w\}\subseteq N_{\overline{G}}(v_{3})$ for some vertices $u,w\in J_{2}$. (Here, $u$ and $w$ may or may not be the same vertices.) Due to this and since $r\geq5$, it follows that there exists a vertex $x\in J_{2}$ adjacent to all $v_{1}$, $v_{2}$ and $v_{3}$ in $\overline{G}$ (recall that $J_{2}$ is a clique in $\overline{G}$). This guarantees that every color class in any $\chi_{2}^{f}(\overline{G})$-coloring has at most two vertices. Hence, $\chi_{2}^{f}(\overline{G})\geq n/2$. Therefore, $\chi_{2}^{f}(G)+\chi_{2}^{f}(\overline{G})\geq n/2+3>(n+3)/2$, which is impossible. In fact, the above argument shows that $r=3$.

\begin{figure}
\begin{center}
\begin{tikzpicture}[scale=1.5, vertex/.style={circle, fill, inner sep=2.2pt}]
\foreach \i in {1,...,9}{
\node[vertex] (v\i) at (360/9*\i:1) {};
}
\draw (0.19,0.98)--(0.75,0.67);
\draw (0.77,0.65)--(1,0);
\draw (1,0)--(0.78,-0.62);
\draw (0.78,-0.64)--(0.18,-0.99);
\draw (0.18,-0.99)--(-0.48,-0.87);
\draw (-0.49,-0.87)--(-0.92,-0.37);
\draw (-0.94,-0.37)--(-0.94,0.34);
\draw (-0.94,0.34)--(-0.48,0.9);
\draw (-0.48,0.88)--(0.19,1);
\draw (0.76,-0.62)--(0.17,0.98);
\draw (0.76,-0.64)--(-0.94,-0.35);
\draw (-0.93,-0.35)--(0.18,0.99);
\draw (0.78,0.68)--(0.18,-0.99);
\draw (1,0)--(-0.48,-0.86);
\draw (0.18,-0.99)--(-0.94,0.34);
\draw (-0.49,-0.86)--(-0.49,0.9);
\draw (-0.94,0.34)--(0.77,0.65);
\draw (-0.48,0.86)--(1,0);
\node [scale=1] at (0,-1.35) {$G_{1}$};


\fill (2.5,-0.05) circle (2pt);
\fill (2.5,0.85) circle (2pt);
\fill (2.5,-0.95) circle (2pt);

\fill (3.4,-0.05) circle (2pt);
\fill (3.4,0.85) circle (2pt);
\fill (3.4,-0.95) circle (2pt);

\fill (4.3,-0.05) circle (2pt);
\fill (4.3,0.85) circle (2pt);
\fill (4.3,-0.95) circle (2pt);

\draw (2.5,-0.05)--(2.5,0.85)--(2.5,-0.95)--(3.4,-0.95)--(4.3,-0.95)--(4.3,-0.05)--(4.3,0.85)--(3.4,0.85)--(2.5,0.85);
\draw (3.4,-0.95)--(3.4,-0.05)--(3.4,0.85);
\draw (2.5,-0.05)--(3.4,-0.05)--(4.3,-0.05);

\draw (2.5,0.88) .. controls (3.4,1.1) and (3.4,1.1) .. (4.3,0.88);
\draw (2.5,-0.02) .. controls (3.4,0.2) and (3.4,0.2) .. (4.3,-0.02);
\draw (2.5,-0.92) .. controls (3.4,-0.7) and (3.4,-0.7) .. (4.3,-0.92);
\draw (2.52,0.88) .. controls (2.72,-0.05) and (2.72,-0.05) .. (2.52,-0.95);
\draw (3.42,0.88) .. controls (3.62,-0.05) and (3.62,-0.05) .. (3.42,-0.95);
\draw (4.32,0.88) .. controls (4.52,-0.05) and (4.52,-0.05) .. (4.32,-0.95);
\node [scale=1] at (3.4,-1.35) {$G_{2}$};


\fill (5.8,0.85) circle (2pt);
\fill (5.8,-0.05) circle (2pt);
\fill (5.8,-0.95) circle (2pt);

\fill (6.7,0.85) circle (2pt);
\fill (6.7,-0.05) circle (2pt);
\fill (6.7,-0.95) circle (2pt);

\fill (7.6,0.85) circle (2pt);
\fill (7.6,-0.05) circle (2pt);
\fill (7.6,-0.95) circle (2pt);

\draw (5.8,0.85)--(5.8,-0.05)--(5.8,-0.95)--(6.7,-0.95)--(6.7,-0.05)--(6.7,0.85)--(5.8,0.85);
\draw (7.6,0.85)--(7.6,-0.05)--(7.6,-0.95)--(5.8,-0.05);
\draw (7.6,-0.95)--(6.7,0.85);
\draw (5.8,-0.95)--(7.6,-0.05)--(6.7,-0.95);
\draw (5.8,-0.05)--(6.7,-0.05)--(7.6,0.85);

\draw (5.8,0.88) .. controls (6.7,1.1) and (6.7,1.1) .. (7.6,0.88);
\draw (5.82,0.85) .. controls (5.99,-0.05) and (5.99,-0.05) .. (5.82,-0.95);
\draw (6.7,0.85) .. controls (6.92,-0.05) and (6.92,-0.05) .. (6.72,-0.95);
\draw (7.62,0.85) .. controls (7.82,-0.05) and (7.82,-0.05) .. (7.62,-0.95);
\node [scale=1] at (6.8,-1.35) {$G_{3}$};
\end{tikzpicture}
\end{center}\vspace{-6mm}
\caption{{\small For each $j\in[3]$, $\chi_{2}^{f}(G_{j})=\chi_{2}^{f}(\overline{G_{j}})=3$.}}
\label{fig:3graphs}
\end{figure}
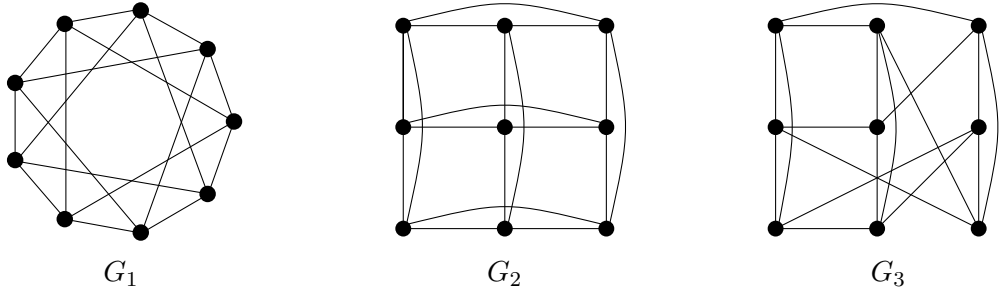

Keeping $t=\chi_{2}^{f}(G)$ and $n=3t$ in mind, we deduce from $\chi_{2}^{f}(G)+\chi_{2}^{f}(\overline{G})=(n+3)/2$ that $\chi_{2}^{f}(\overline{G})=(t+3)/2$. On the other hand, the resulting equality $\chi_{2}^{f}(\overline{G})=\Delta(\overline{G})/2+1$ from (\ref{NG2}) and interchanging $G$ with $\overline{G}$, we analogously infer that every color class in any $\chi_{2}^{f}(\overline{G})$-coloring has exactly three vertices. Due to this, the resulting equality $3\chi_{2}^{f}(\overline{G})=3(t+3)/2=n=3t$ implies that $t=3$. 

Summing up, we have proved that $G$ is a $3$-partite graph on $9$ vertices in which the subgraph induced by every two partite sets is $2$-regular. This is equivalent to saying that $\overline{G}$ is obtained from the disjoint union $K_{3}^{1}+K_{3}^{2}+K_{3}^{3}$ of triangles by adding some edges such that $[K_{3}^{r},K_{3}^{s}]$ is a matching, of cardinality $3$, for each distinct $r,s\in[3]$. Now, it is not hard to verify that, up to isomorphism, there are only three such graphs $G$ whose complements are depicted in Figure~\ref{fig:3graphs}. This contradicts the fact that $G\notin \{G_{j},\overline{G_{j}}\}_{j=1}^{3}$. This, together with the inequality (\ref{NG2}), implies that $\chi_{2}^{f}(G)+\chi_{2}^{f}(\overline{G})\geq(n+4)/2$.

That the bound is sharp, may be seen as follows. Consider the cycle $C_{n}:v_{1}v_{2}\ldots,v_{n}v_{1}$ for any even integer $n\geq4$. Clearly, $\chi_{2}^{f}(C_{n})=2$. 
On the other hand, assigning $j$ to the vertices $v_{2j-1}$ and $v_{2j}$, for each $j\in[n/2]$, gives a $2$-frugal coloring of $\overline{C_{n}}$ with $n/2$ colors.
Thus, $\chi_{2}^{f}(C_{n})+\chi_{2}^{f}(\overline{C_{n}})\le n/2+2$, which together with the proved lower bound yields the equality. This completes the proof.  
\end{proof}

In what follows, we bound $\chi_{2}^{f}(G)+\chi_{2}^{f}(\overline{G})$ from above in terms of the order of $G$, and characterize the family of extremal graphs for the bound.

\begin{theorem}\label{N-G-upper}
If $G$ is a graph of order $n\geq2$, then 
\begin{center}
$\chi_{2}^{f}(G)+\chi_{2}^{f}(\overline{G})\leq \dfrac{3n}{2}$.
\end{center} 
Moreover, equality holds if and only if $n$ is even and $G\in \{K_{1,n-1},\overline{K_{1,n-1}}\}$.
\end{theorem}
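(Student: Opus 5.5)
The plan is to bound each term by a matching number of the \emph{other} graph and then bound the sum of the two matching numbers from below. The key observation is trivial: any two vertices $x,y$ that are nonadjacent in $G$ form a $2$-frugal independent set of $G$. The set $\{x,y\}$ is independent, and no vertex can have more than $2$ neighbours in a $2$-element set. So if $M'$ is a matching of $\overline{G}$, colouring each edge of $M'$ with its own colour and every remaining vertex with a fresh colour gives a $2$-frugal colouring of $G$. Writing $\nu$ for the matching number, this yields
\[
\chi_{2}^{f}(G)\le n-\nu(\overline{G}),\qquad \chi_{2}^{f}(\overline{G})\le n-\nu(G),
\]
and hence $\chi_{2}^{f}(G)+\chi_{2}^{f}(\overline{G})\le 2n-\nu(G)-\nu(\overline{G})$. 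It therefore suffices to show $\nu(G)+\nu(\overline{G})\ge \lceil n/2\rceil$. For odd $n$ this gives the bound $(3n-1)/2<3n/2$, so equality can only occur for even $n$.

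For the matching inequality, I take a maximum matching $M$ of $G$ and let $U$ be the set of $M$-unsaturated vertices, so $|U|=n-2\nu(G)$. The set $U$ is independent in $G$, hence a clique in $\overline{G}$, so $\nu(\overline{G})\ge\lfloor |U|/2\rfloor$. This gives $\nu(G)+\nu(\overline{G})\ge\lfloor n/2\rfloor$, which settles even $n$.

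For odd $n$ one extra $\overline{G}$-edge is needed, disjoint from a perfect matching of $U$ minus one vertex $u$. If $G$ is complete, then directly $\chi_2^f(K_n)+\chi_2^f(\overline{K_n})=n+1<3n/2$. If $G$ is edgeless, the statement is symmetric to the complete case. Otherwise $M\ne\emptyset$. If $|U|=1$, any non-edge of $G$ gives $\nu(\overline{G})\ge 1=\lceil n/2\rceil-\nu(G)$. If $|U|\ge 3$, I use the standard augmenting-path fact: for an edge $ab\in M$, it cannot happen that $a$ and $b$ have distinct $G$-neighbours in $U$. Consequently, either some endpoint of $ab$ is nonadjacent (in $G$) to a suitably chosen leftover vertex $u\in U$, or both $a,b$ are adjacent only to the same single vertex $u_0$ of $U$. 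In the latter case I choose the leftover vertex to be different from $u_0$ and pair it with $a$ in $\overline{G}$. Either way, the $\overline{G}$-matching on $U$ extends by one edge.

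The main obstacle is the equality characterization for even $n$. Sufficiency is a direct computation. For $K_{1,n-1}$, the centre gets one colour and the leaves need $\lceil (n-1)/2\rceil$ colours. Its complement $K_{n-1}\cup K_1$ needs $n-1$ colours. The sum is $n+n/2$.

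For necessity, equality forces three things: $\nu(G)+\nu(\overline{G})=n/2$, $\chi_{2}^{f}(G)=n-\nu(\overline{G})$, and $\chi_{2}^{f}(\overline{G})=n-\nu(G)$. Equality in the matching bound means $\nu(\overline{G})$ is exactly $|U|/2$ for every maximum matching of $G$. I would first try the strengthened argument above: any matched edge $ab$ having an endpoint nonadjacent in $G$ to two vertices of $U$, or to a vertex of $U$ while $U$ still admits a perfect matching in $\overline{G}$ elsewhere, produces an $M$-alternating rearrangement giving a larger $\overline{G}$-matching. This should force either $\nu(G)\le 1$ or, symmetrically, $\nu(\overline{G})\le 1$. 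Say $\nu(G)=1$, so $G$ is a star or a triangle plus isolated vertices (the triangle case fails for large $n$ by direct computation). Then the extra tightness condition $\chi_{2}^{f}(G)=n-\nu(\overline{G})$ rules out stars with isolated vertices added and leaves only $G=K_{1,n-1}$, or its complement in the symmetric case. Small $n$ (e.g.\ $n\le 4$) would be checked by hand.
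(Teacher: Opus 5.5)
\textbf{Inequality: correct.} Your proof of the inequality itself is correct, and it packages the paper's idea more cleanly. The paper fixes a maximum matching $M$ of $G$ and treats odd $|B|$ by a separate case analysis, including a join case $G=A\vee B$. Your bound $\chi_2^f(G)+\chi_2^f(\overline{G})\le 2n-\nu(G)-\nu(\overline{G})$, together with $\nu(G)+\nu(\overline{G})\ge\lceil n/2\rceil$ for $G\notin\{K_n,\overline{K_n}\}$, handles everything at once. Your augmenting-path step even shows that the join case with $|B|\ge 3$ cannot occur.

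\textbf{Equality: the gap.} The gap is in the necessity half of the characterization. It is true that $\nu(G)+\nu(\overline{G})=n/2$ forces $\nu(G)\le 1$ or $\nu(\overline{G})\le 1$, but you only assert it (``should force''). The mechanism you sketch does not yield it: if a matched vertex $a$ is nonadjacent in $G$ to two vertices $u_1,u_2\in U$, the $\overline{G}$-edges $au_1,au_2$ share $a$. Together with a matching of $U\setminus\{u_1\}$ you only reach $1+(|U|-2)/2=|U|/2$.

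Separately, your claim that $\chi_2^f(G)=n-\nu(\overline{G})$ excludes stars with isolated vertices is false. For $G=K_{1,n-3}\cup\overline{K_2}$, both sides equal $n/2$. What excludes such stars is $\nu(\overline{G})=n/2-1$, since their complements have perfect matchings.

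\textbf{The missing idea.} For even $n$, $|U|$ is even and $U$ is a clique of $\overline{G}$, so $U$ \emph{alone} already carries a $\overline{G}$-matching of size $|U|/2$. Any further $\overline{G}$-edge disjoint from it breaks tightness. Hence $\nu(\overline{G})=|U|/2$ forces two things:
(i) $V(M)$ is a clique of $G$;
(ii) there are no two disjoint $G$-non-edges between $V(M)$ and $U$, since they would give $2+(|U|-2)/2$. So these non-edges form a star with some centre $z$.

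\textbf{Finishing the characterization.} Your augmenting-path argument now finishes. If $|U|=0$ then $G=K_n$ by (i), and if $M=\emptyset$ then $G=\overline{K_n}$. Otherwise $|U|\ge 2$ and $M\ne\emptyset$, so such non-edges must exist, or any $M$-edge would augment.
\begin{itemize}
\item If $z\in V(M)$, every $M$-edge not containing $z$ is complete to $U$ and would augment. So $M=\{zb\}$ with $b$ complete to $U$, and then $z$ has no $G$-neighbour in $U$. That is, $G=K_{1,n-1}$.
\item If $z\in U$, every $M$-edge is complete to $U\setminus\{z\}$, forcing $|U|=2$. Then $z$ has no $G$-neighbour in $V(M)$, that is, $G=K_{n-1}\cup K_1$.
\end{itemize}
Since $K_n$ and $\overline{K_n}$ give $n+1<3n/2$ for $n\ge 4$, this completes the characterization without any recolouring. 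By contrast, the paper extracts the structure from the colouring-tightness conditions via a longer case analysis.
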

\begin{proof}
If $G\cong K_{n}$ or $G\cong \overline{K_{n}}$, then $\chi_{2}^{f}(K_n)+\chi_{2}^{f}(\overline{K_n})=n+1\leq3n/2$. Thus, we may assume that $G$ is not a complete graph and has at least one edge, in particular, $n\geq3$. 

Let $M$ be a maximum matching in $G$, and let $A$ be the subgraph of $G$ induced by the set of endvertices of edges in $M$. Furthermore, let $B$ be the subgraph induced by $V(G)\setminus V(A)$. It is clear that $|V(B)|=n-2|M|$, and that $M\neq \emptyset$ since $G$ has at least one edge. Note that vertices of $B$ form an independent set as $M$ is a maximum matching in $G$. If $n-2|M|$ is even, then we can pair the vertices of $B$ so that the vertices in each pair receive the same color in a $2$-frugal coloring of $G$, which gives the upper bound $\chi_{2}^{f}(G)\le(n-2|M|)/2+2|M|$. On the other hand, since the endvertices of an edge in $M$ can receive the same color in a $2$-frugal coloring of $\overline{G}$, we infer $\chi_{2}^{f}(\overline{G})\le|M|+n-2|M|$. Summing up, we obtain
\begin{equation}\label{even} 
\chi_{2}^{f}(G)+\chi_{2}^{f}(\overline{G})\le \dfrac{n-2|M|}{2}+2|M|+|M|+n-2|M|=\dfrac{3n}{2},
\end{equation}
as desired.

Now let $n-2|M|$ be odd. If there are two vertices $a\in V(A)$ and $b\in V(B)$ such that $ab\notin E(G)$, then $a$ and $b$ can receive the same color in a $2$-frugal coloring of $G$, while the vertices in $V(B)\setminus \{b\}$ can again be arranged in pairs. Hence, $\chi_{2}^{f}(G)\le(n-2|M|+1)/2+2|M|-1$. Using $\chi_{2}^{f}(\overline{G})\le|M|+n-2|M|$ again, we infer that
\begin{center}
$\chi_{2}^{f}(G)+\chi_{2}^{f}(\overline{G})\leq \dfrac{n-2|M|+1}{2}+2|M|-1+|M|+n-2|M|=\dfrac{3n-1}{2}$,
\end{center}
and the claimed bound holds. 
Thus, we may assume that $G=A\vee B$, that is, $G$ is the join of $A$ and $B$, where $A$ has a perfect matching and $B$ is an edgeless graph. Since $n-2|M|$ is odd, it follows that $2|M|\le n-1$. Note that $\chi_{2}^{f}(G)\le(n-2|M|-1)/2+2|M|+1$. Since $\overline{G}$ is isomorphic to the disjoint union $\overline{A}+K_{n-2|M|}$, we infer that $\chi_{2}^{f}(\overline{G})\le\max\{|M|,n-2|M|\}$. We distinguish two cases with respect to which of the values achieves the maximum. 

Firstly, assume that $|M|\ge n-2|M|$. Then, $\chi_{2}^{f}(G)+\chi_{2}^{f}(\overline{G})\le(n-2|M|-1)/2+2|M|+1+|M|=(n-1)/2+2|M|+1\le(3n-1)/2$, as desired. Secondly, let $|M|\le n-2|M|$. Then, $$\chi_{2}^{f}(G)+\chi_{2}^{f}(\overline{G})\le \frac{n-2|M|-1}{2}+2|M|+1+n-2|M|=\frac{3n+1-2|M|}{2}\le \frac{3n-1}{2},$$
where the last inequality follows from the fact that $M\neq \emptyset$. 
In all cases, we obtained the inequality $\chi_{2}^{f}(G)+\chi_{2}^{f}(\overline{G})\leq3n/2$, which proves the first statement of the theorem. 

To verify the characterization part, we first observe that $\chi_{2}^{f}(K_{1,n-1})+\chi_{2}^{f}(\overline{K_{1,n-1}})=\chi_{2}^{f}(K_{1,n-1})+\chi_{2}^{f}(K_{1}+K_{n-1})=(n/2+1)+(n-1)=3n/2$ for any even integer $n\geq2$. 
Conversely, let $\chi_{2}^{f}(G)+\chi_{2}^{f}(\overline{G})=3n/2$, $c$ be a $\chi_{2}^{f}(G)$-coloring and $c'$ be a $\chi_{2}^{f}(\overline{G})$-coloring. In particular, $n$ is necessarily even and the inequality (\ref{even}) holds with equality. This implies that $\chi_{2}^{f}(G)=(n-2|M|)/2+2|M|$ and that $\chi_{2}^{f}(\overline{G})=n-|M|$. Therefore, $A\cong K_{2|M|}$ and $B=\overline{K_{n-2|M|}}$. Trivially, $G\in \{K_{1,n-1},\overline{K_{1,n-1}}\}$ for $n=2$. So, let $n\geq3$. Suppose that $B$ is the empty graph. Then, $G\cong K_{n}$ and $\chi_{2}^{f}(G)+\chi_{2}^{f}(\overline{G})=n+1<3n/2$, a contradiction. Therefore, $B$ is not empty, and hence $|V(B)|\geq2$. 

If $|M|=1$, then we observe that $G\cong K_{3}+\overline{K_{n-3}}$ or $G\cong K_{1,r-1}+\overline{K_{n-r}}$ for some integer $r\geq2$. When the first isomorphism holds, we have $\chi_{2}^{f}(G)+\chi_{2}^{f}(\overline{G})=n+2=3n/2$. This implies that $G\cong \overline{K_{1,3}}$, and we are done. If the second isomorphism holds, then $\chi_{2}^{f}(G)+\chi_{2}^{f}(\overline{G})=(r-2)/2+2+n-1\leq3n/2$ if $r$ is even, and $\chi_{2}^{f}(G)+\chi_{2}^{f}(\overline{G})=(r-1)/2+1+n-1<3n/2$ if $r$ is odd. This implies that $n=r$, and hence $G\cong K_{1,n-1}$. If $|M|\geq2$, we then distinguish two cases depending on $[V(A),V(B)]$.\vspace{0.5mm}\\
\textit{Case 1.} $G\not\cong A\vee B$. Therefore, there exist some vertices $u\in V(A)$ and $x\in V(B)$ such that $ux\notin E(G)$. Suppose that there exists a vertex $y\in V(B)\setminus \{x\}$ which is not adjacent to some vertex $v\in V(A)\setminus \{u\}$. Then, $c''(x)=c(u)$, $c''(y)=c(v)$ and $c''(w)=c(w)$ for any other vertex $w$ defines a $2$-frugal coloring of $G$ with $|c''\big{(}V(G)\big{)}|<|c\big{(}V(G)\big{)}|$, which is impossible. This shows that every vertex in $V(B)\setminus \{x\}$ is adjacent to all vertices in $A\setminus \{u\}$. We now need to consider two more possibilities.\vspace{0.5mm}\\
\textit{Subcase 1.1.} $|V(B)|=2$. Suppose first that $xv_{1},xv_{2}\notin E(\overline{G})$ for some $v_{1},v_{2}\in V(A)\setminus \{u\}$. Let $y\in V(B)\setminus \{x\}$. In such a situation, $h(y)=c'(v_{1})=c'(v_{2})$ and $h(w)=c(w)$ for any other vertex $w$ is a $2$-frugal coloring of $\overline{G}$ with $|h\big{(}V(\overline{G})\big{)}|=n-|M|-1<|c'\big{(}V(\overline{G})\big{)}|$, which is impossible. Therefore, $|N_{\overline{G}}(x)\cap V(A)|\geq|V(A)|-1$. If $|N_{\overline{G}}(x)\cap V(A)|=|V(A)|$, then $\overline{G}\cong K_{1,n-1}$ and we are done. If $|N_{\overline{G}}(x)\cap V(A)|=|V(A)|-1$, then $\overline{G}\cong K_{1,n-2}+K_{1}$. It is then easy to check that                                                    $\chi_{2}^{f}(G)+\chi_{2}^{f}(\overline{G})=(n-2)/2+1+n-1<3n/2$, a contradiction. Thus, $G\in \{K_{1,n-1},\overline{K_{1,n-1}}\}$ when $|B|=2$.\vspace{0.5mm}\\
\textit{Subcase 1.2.} $|V(B)|\geq4$. Let $y,z\in V(B)\setminus \{x\}$ and $v_{1},v_{2}\in V(A)\setminus \{u\}$ be distinct vertices. Then, $h(v_{1})=c'(y)$, $h(v_{2})=c'(z)$ and $h(w)=c(w)$ for any other vertex $w$ defines a $2$-frugal coloring of $\overline{G}$ with $|h\big{(}V(\overline{G})\big{)}|=n-|M|-1<|c'\big{(}V(\overline{G})\big{)}|$, a contradiction.\vspace{0.5mm}\\
\textit{Case 2.} $G\cong A\vee B$. In such a case, by taking $A\cong K_{2|M|}$ and $B=\overline{K_{n-2|M|}}$ into account, it is readily seen that $\chi_{2}^{f}(G)+\chi_{2}^{f}(\overline{G})=n<3n/2$. This is a contradiction.\vspace{0.5mm}

All in all, we have proved that $G\cong K_{1,n-1}$ or $G\cong \overline{K_{1,n-1}}$, in which $n$ is even, when $\chi_{2}^{f}(G)+\chi_{2}^{f}(\overline{G})=3n/2$. This completed the proof.   
\end{proof} 


\section{Graph classes and operations}
\label{sec:classes}
\subsection{Block graphs}

Recall that a {\em block} in a graph $G$ is a maximal connected subgraph of $G$ that has no cut-vertices. A graph $G$ is a {\em block graph} if every block in $G$ is a complete graph. Applying the lower bound in Observation~\ref{ob:lowerboundfor2} for $k=2$, we infer that for any graph $G$, 
\begin{equation}\label{eq:lower}
\fd(G)\ge \max\Big\{\chi(G),\Big\lceil\frac{\Delta(G)}{2}\Big\rceil+1\Big\}.   
\end{equation}
In this section, we prove that (\ref{eq:lower}) holds with equality for block graphs. Let $\omega(G)$ stand for the {\em clique number} of $G$, which is the largest cardinality of a clique in $G$. Clearly, $\chi(G)\ge \omega(G)$ in any graph $G$, and $\chi(G)=\omega(G)$ if $G$ is a block graph. 

\begin{theorem}
\label{thm:block}
If $G$ is a block graph with maximum degree $\Delta$ and clique number $\omega$, then $$\fd(G)=\max\Big\{\omega,\Big\lceil\frac{\Delta}{2}\Big\rceil+1\Big\}.$$ 
\end{theorem}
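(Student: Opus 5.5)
The lower bound is exactly (\ref{eq:lower}) together with $\chi(G)=\omega$ for block graphs. So the work is the upper bound: construct a $2$-frugal coloring of $G$ with $k=\max\{\omega,\lceil\Delta/2\rceil+1\}$ colors. Colorings of different components can be combined, so I will assume $G$ is connected.

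I will build the coloring block by block, following a breadth-first traversal of the block-cutvertex tree. Root this tree at some block $B_0$ and color $B_0$ injectively, which is possible since $|B_0|\le\omega\le k$. At each later step, take a block $B$ whose parent cut vertex $x$ is already colored while the rest of $B$ is not. Color $V(B)\setminus\{x\}$ injectively with colors different from $c(x)$. Properness is then automatic: every edge lies inside a block, and each block is colored injectively.

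The frugality condition is maintained by an invariant at each vertex $v$. Consider the neighbors of $v$ in all blocks containing $v$. Within a single block every color appears at most once, so a color can reach multiplicity $3$ in $N(v)$ only by occurring in at least three blocks at $v$. Fix $x$ and let its blocks be $B_1$ (the parent block, already colored) and $B_2,\dots,B_s$ (the children). Each $B_i$ contributes $d_i=|B_i|-1$ neighbors of $x$, and $\sum_i d_i=\deg(x)\le\Delta$.

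The real condition to secure is this: over all these blocks, each color other than $c(x)$ is used at most twice among the neighbors of $x$. Together there are at most $\Delta$ neighbors and $k-1\ge\lceil\Delta/2\rceil$ available colors, so capacity $2$ per color suffices in total. The catch is that each block must receive distinct colors. I will assign colors to the child blocks greedily, processing colors in a cyclic order, as follows.

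\begin{enumerate}
\item List the neighbors block by block: first those in $B_1$ with their fixed colors, then $B_2,\dots,B_s$.
\item Write the available palette $[k]\setminus\{c(x)\}$, of size $k-1\ge\lceil\Delta/2\rceil$, twice in a row as a cyclic sequence, and let the blocks consume consecutive positions of this doubled sequence.
\item Since each block has $d_i\le\omega-1\le k-1$ vertices, a window of $d_i$ consecutive positions in the cyclic order never repeats a color, so each block is colored injectively.
\item The total length needed is $\deg(x)\le 2(k-1)$, so no color is used more than twice.
\end{enumerate}

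The subtlety is that the parent block $B_1$ is already colored, so its colors are fixed and may not be a contiguous window. I will handle this by first permuting the palette so that the colors of $B_1-x$ come first in the listing. That works because only $B_1$ is fixed at the moment $x$ is processed. The other vertices of $B_1$ are processed independently, since their child blocks are disjoint from those of $x$.

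The main obstacle is checking that these local choices do not interact. A vertex $y$ in a child block $B_i$ of $x$ has its neighborhood consisting of $B_i-y$ plus its own child blocks. That is exactly the situation at $x$ with $B_i$ as the parent block, so the invariant propagates down the tree. Colors of neighbors of $y$ are never constrained by $x$'s choices beyond $B_i$ itself. I expect verifying this locality, and the rearrangement argument for the fixed parent-block colors, to be the only delicate points. Together these give $\fd(G)\le k$, which combined with (\ref{eq:lower}) proves the theorem.
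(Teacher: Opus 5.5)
Your proof is correct. Its global structure matches the paper's: root the block tree, work top-down, and at each cut vertex $x$ color all child blocks of $x$ simultaneously. The requirement is that each block is colored injectively and each color other than $c(x)$ occurs at most twice in $N_G(x)$; locality then propagates this invariant down the tree.

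The difference is in the local step at $x$. The paper splits into the cases $\omega\ge\lceil\Delta/2\rceil+1$ and $\omega<\lceil\Delta/2\rceil+1$.
\begin{itemize}
\item In the first case it uses the colors of $[\omega]$ not yet on the parent block, then reuses $[\omega]\setminus\{c(x)\}$. This is essentially your cyclic filling.
\item In the second case it repeatedly gives a new color to one vertex in each of the two blocks with the most uncolored vertices. This can stall with a single large block left and the parent block too small to supply enough colors. The paper then recolors the whole configuration of blocks at $x$, starting from the largest block, and permutes colors to agree with the fixed parent block.
\end{itemize}

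Your doubled cyclic palette, with the $d_1$ parent colors placed first, uses only the two inequalities $d_i\le k-1$ and $\deg(x)\le 2(k-1)$. It therefore treats both regimes uniformly and never needs recoloring. The paper's case analysis does not seem to give anything your argument misses.

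One point of presentation needs tidying. The per-block instruction in your second paragraph (color $V(B)\setminus\{x\}$ injectively avoiding $c(x)$) is not enough on its own. Your write-up should state explicitly that the windows for all children of $x$ are fixed together when $x$ is processed, as your later remark about only $B_1$ being fixed indicates.
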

\begin{proof}
We may assume that $G$ is connected. Consider the tree-like representation $T_G$ of the block graph $G$, in which the blocks represent vertices, and root $T_G$ at any block. Note that with respect to $T_G$, the blocks that intersect the root block $B^r$ are the children of $B^r$. In addition, any other block $B$ has exactly one parent, and all other blocks that intersect $B$ are its children. 

In the rest of the proof, we construct a $2$-frugal coloring of $G$, with $\max\{\omega,\lceil \Delta/2\rceil+1\}$ colors, which is done inductively from the root block downwards. First, we color the vertices of $B^r$ by using $|V(B^r)|$ colors. The coloring procedure is continued in a such a way that vertices of a block $B$ are colored only after the vertices of its parent block have already been colored. The proof is by induction on the number of blocks colored up to some point of this procedure. Clearly, after the base case, when $B^r$ has been colored by $|V(B^r)|$ colors, the coloring is proper and no vertex has more than two neighbors with the same color, and the number of used colors is $|V(B^r)|$, which is less than or equal to $\max\{\omega,\lceil \Delta/2\rceil+1\}$. 

Now, assume that there is a block $B_1$ such that the vertices of its parent block $B$ have already been colored by a $2$-frugal coloring $c$, and let $V(B)\cap V(B_1)=\{x\}$. Consider all blocks that intersect $B$ in $x$, denote them by $B_1,\ldots,B_\ell$, and note that they are children of $B$ in $T_G$. We will color all vertices of the blocks $B_1,\ldots,B_\ell$ simultaneously (with the clear exception of $x$ that has already been colored). We distinguish two cases with respect to which of the values in the statement of the theorem is larger. In each case, we may assume that the colors in $[|V(B)|]$ are assigned to the vertices of $B$.\vspace{1mm}\\ 
\textit{Case 1.} $\omega\ge \lceil \Delta/2\big\rceil+1$. Let $q=\omega-|V(B)|$, and let $i\in[\ell]$ be the smallest index such that $|(V(B_1)\setminus\{x\})\cup\cdots\cup (V(B_i)\setminus\{x\})|\ge q$ if such $i$ exists. In any case, color the vertices in $V(B_1)\setminus\{x\},\cdots,V(B_i)\setminus\{x\}$, respectively, with colors in $[\omega]\setminus[|V(B)|]$ as long as there are such colors. (If $q=0$, that is, if $V(B)$ is a largest clique in $G$, then this coloring step is not applicable, and we proceed to the next step.) Hence, after this is done, some vertices of $B_i$ may not have been colored. Let $B_i'$ be the set of vertices of $B_i$ that have already been colored and $B_i''$ be the set of vertices of $B_i$ that have not yet been colored up to this point. Note that $B_i''$ could be empty, while $x\in B_i'$. If $B_i''\ne\emptyset$ or if $i<\ell$, color the remaining not yet colored vertices of $B_i$ and, if applicable, also the vertices of $(V(B_{i+1})\setminus\{x\})\cup\cdots\cup (V(B_\ell)\setminus\{x\})$ by at most $\omega-1$ colors from $[\omega]\setminus \{c(x)\}$. This is possible since 
\begin{center}
$|B_i''\cup (V(B_{i+1})\setminus\{x\})\cup\cdots\cup (V(B_\ell)\setminus\{x\})|\le \Delta+1-\omega\le \omega-1$
\end{center}
(note that we assign $|B_{i}''|$ colors from $[\omega]\setminus \{c(x)\}$ to the vertices in $B_{i}''$ that have not already been assigned to the vertices in $B_{i}'$). In fact, the coloring $c$ has been extended by using at most $\omega$ colors to the vertices in $V(B_1)\cup\cdots V(B_\ell)$. The coloring is proper, and every color in $N_G(x)$ appears at most twice. There are no other vertices in $N_G(x)$ whose neighborhood needs to be verified in $N_{G}[x]$. Consequently, the proof ends by using induction.\vspace{1mm}\\ 
\textit{Case 2.} $\omega<\lceil \Delta/2\rceil+1$. If $\ell=1$, that is, if $x$ lies only in the blocks $B$ and $B_1$, then color the vertices in $V(B_1)\setminus\{x\}$ with distinct colors in $[\omega]\setminus \{c(x)\}$. Since $|V(B_1)|\le \omega<\lceil \Delta/2\rceil+1$, such a coloring of the vertices of $B_1$ is possible, and it results in a proper coloring of the vertices in $N_{G}[x]$ such that every color in the neighborhood of $x$ appears at most twice. Hence, we may assume that $\ell\ge 2$. Sequence the blocks $B_1,\ldots,B_\ell$ with respect to their orders from the largest to the smallest. Choose an arbitrary color from $S=[\lceil \Delta/2\rceil+1]\setminus[|V(B)|]$ and use it for two vertices that belong to two largest blocks, respectively. Then, update the sequence $B_1,\ldots,B_\ell$ based on the number of not yet colored vertices in each $B_{i}$. Repeat this procedure, by always choosing two blocks with the largest numbers of not yet colored vertices and color a vertex in each of these two blocks with the same color which is any color not yet used in $S$, until this is possible. If all vertices in $V(B_{1})\cup \ldots \cup V(B_{\ell})$ are colored in this way, then we are done by using induction as the resulting coloring up to this point is $2$-frugal. So, we may assume that this procedure ends before coloring all vertices in $V(B_{1})\cup \ldots \cup V(B_{\ell})$. There are two possibilities depending on when the procedure is no longer possible.\vspace{0.75mm}\\ 
\textit{Subcase 2.1}: All colors in $S$ have been exhausted. Note that each such color is used twice in $N_{G}(x)$. This, by taking the colors used in $V(B)$ into account, shows that $2\lceil \Delta/2\rceil+2-|V(B)|$ vertices of $N_{G}[x]$ have been colored so far. Therefore, at most $|V(B)|-1$ vertices of $N_{G}[x]$ remain uncolored. In such a situation, we can use $|V(B)|-1$ colors from $[|V(B)|]\setminus \{c(x)\}$ to color them. This results in a $2$-frugal coloring up to this point.\vspace{0.75mm}\\
\textit{Subcase 2.2}: Only one of the blocks remains to have uncolored vertices. By the way how we colored the vertices in $N(x)$, we may assume this block is $B_1$. Note that at least two vertices of $B_{1}$ have already been colored: the vertex $x$ and another vertex colored with a color in $S$, because $V(B_2)\setminus\{x\}\ne\emptyset$. Let $Q$ be the set of not yet colored vertices of $B_{1}$. If $|V(B)\setminus \{x\}|\geq|Q|$, then we assign $|Q|$ colors from $[|V(B)|]\setminus \{c(x)\}$ to the vertices in $Q$. It is then easily observed that the resulting coloring up to this point is $2$-frugal. 

Now, assume that $|V(B)\setminus \{x\}|<|Q|$. This in particular shows that $|V(B)|\leq|V(B_{1})|-2$. Moreover, $|V(B_{1})|=\max\{|V(B)|,|V(B_{1})|,\ldots,|V(B_{\ell})|\}$. We define the following coloring $c'$ of the subgraph $G'$ of $G$ induced by $V(B)\cup V(B_{1})\cup\ldots \cup V(B_{\ell})$. We begin with assigning the colors in $[|V(B_{1})|]$ to the vertices of $B_{1}$. Iterating the above-mentioned procedure for the blocks $B,B_{2},\ldots,B_{\ell}$, we meet one of the following possibilities:\vspace{0.25mm}\\
$(i)$ all vertices of $G'$ are colored, or\vspace{0.25mm}\\
$(ii)$ all colors not in $[|V(B_{1})|]$ are exhausted, or\vspace{0.25mm}\\
$(iii)$ only one of the blocks, say $B_{i}$, remains to have uncolored vertices.\vspace{0.25mm}

If $(i)$ holds, then all the vertices of $G'$ are colored by at most $\lceil \Delta/2\rceil+1$ colors. If $(ii)$ happens, similarly to Subcase 2.1, we complete the coloring to a $2$-frugal coloring of $G'$ with at most $\lceil \Delta/2\rceil+1$ colors. Assume now that $(iii)$ happens and that $Q_{i}'$ is the set of uncolored vertices of $B_{i}$. Since $|V(B_{1})|-1\geq|Q_{i}'|$, we can assign $|Q_{i}'|$ colors from $[|V(B_{1})|]$, different from the color of $x$, to the vertices in $Q_{i}'$. Note that the resulting coloring $c'$ of $G'$ is proper and no color appears more than twice in $N_G(x)$, that is, $c'$ is a $2$-frugal coloring of $G'$ with at most $\lceil \Delta/2\rceil+1$ colors.

Let $c'\big(V(B)\big)=\{c'_{1},\ldots,c'_{|V(B)|}\}$. We observe that any permutation of the set of colors of a $2$-frugal coloring is a $2$-frugal coloring as well. In view of this, any permutation $\sigma$ of $c'\big(V(G')\big)$ with $\sigma(c'_{1})=1,\ldots,\sigma(c'_{|V(B)|})=|V(B)|$ leads to a $2$-frugal coloring up to this point. In this way, $c$ is extended to a $2$-frugal coloring so that it colors the vertices in $V(B_{1})\cup \ldots \cup V(B_{\ell})$. Hence, also in this case, induction completes the proof. 
\end{proof}

We immediately infer the exact value of the $2$-frugal chromatic number in trees. 

\begin{corollary}
\label{cor:forest}
If $T$ is a tree with maximum degree $\Delta$, then $\fd(T)=\big\lceil\frac{\Delta}{2}\big\rceil+1$.     
\end{corollary}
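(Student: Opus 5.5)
The corollary should follow at once from Theorem~\ref{thm:block}. A tree $T$ is a block graph: every block is a single edge $K_2$, or $K_1$ when $T$ is trivial. It then remains only to compare the two quantities in the maximum. I would first dispose of the degenerate cases. If $T$ is a single vertex, then $\Delta=0$ and $\fd(T)=1=\lceil 0/2\rceil+1$. If $T=K_2$, then $\Delta=1$ and $\fd(T)=2=\lceil 1/2\rceil+1$. Both agree with the formula, so the statement holds there.

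For a tree with at least two vertices we have $\omega(T)=2$. Theorem~\ref{thm:block} therefore gives
$$\fd(T)=\max\Big\{2,\Big\lceil\frac{\Delta}{2}\Big\rceil+1\Big\}.$$
Since $\Delta\ge 1$, we get $\lceil\Delta/2\rceil\ge 1$, so $\lceil\Delta/2\rceil+1\ge 2$. The maximum is therefore attained by the second term, which yields $\fd(T)=\lceil\Delta/2\rceil+1$.

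I do not expect any real obstacle. The only points needing care are verifying that trees fall under the definition of a block graph and handling the trivial trees, where $\omega$ and $\Delta$ are very small. The same argument extends verbatim to forests, by applying it to the component of maximum degree, or by using that Theorem~\ref{thm:block} is stated for arbitrary block graphs.
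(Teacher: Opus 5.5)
Your proposal is correct and matches the paper's argument: trees are block graphs, so Theorem~\ref{thm:block} applies. For nontrivial trees $\omega=2\le\lceil\Delta/2\rceil+1$, so the maximum is the second term, and your separate check of the trivial tree is a harmless extra.
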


\subsection{Standard graph products}

For the four standard products of graphs $G$ and $H$ (according to \cite{ImKl}), the vertex set of the product is $V(G)\times V(H)$. Their edge sets are defined as follows.
\begin{itemize}\vspace{-1.25mm}
\item In the \emph{Cartesian product} $G\square H$ two vertices are adjacent if they are adjacent in one coordinate and equal in the other.\vspace{-1.25mm}
\item Two vertices in the \emph{direct product} $G\times H$ are adjacent if they are adjacent in both coordinates.\vspace{-1.25mm}
\item The edge set of the \emph{strong product} $G\boxtimes H$ is the union of $E(G\square H)$ and $E(G\times H)$.\vspace{-1.25mm}
\item Two vertices $(g,h)$ and $(g',h')$ are adjacent in the \emph{lexicographic product} $G\circ H$ if either $gg'\in E(G)$ or ``$g=g'$ and $hh'\in E(H)$''.
\end{itemize}\vspace{-1.25mm}

\begin{theorem}
\label{thm:cartesian}
If $G$ and $H$ are arbitrary graphs, then 
$$\max\{\chi_{2}^{f}(G),\chi_{2}^{f}(H)\}\le \chi_{2}^{f}(G\cp H)\le \max\{\chi_2(G),\chi_2(H)\},$$
and the bounds are sharp.
\end{theorem}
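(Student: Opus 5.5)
For the lower bound, I will restrict an optimal coloring of $G\cp H$ to a single fiber. Each $G$-fiber $G^h=\{(g,h): g\in V(G)\}$ induces a copy of $G$ in $G\cp H$. Let $c$ be a $\chi_2^f(G\cp H)$-coloring and fix $h$. Then $g\mapsto c(g,h)$ is a proper coloring of $G$. Moreover, any neighbor of $g$ in $G$ corresponds to a neighbor of $(g,h)$ in the fiber. Hence, if some color appeared three times in $N_G(g)$, it would appear three times in $N_{G\cp H}((g,h))$, which is impossible. So the restriction is a $2$-frugal coloring of $G$, giving $\chi_2^f(G)\le \chi_2^f(G\cp H)$. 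By symmetry the same holds for $H$.

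For the upper bound, set $k=\max\{\chi_2(G),\chi_2(H)\}$. I will take $2$-distance colorings $\alpha:V(G)\to\mathbb{Z}_k$ and $\beta:V(H)\to\mathbb{Z}_k$, and define $c(g,h)=\alpha(g)+\beta(h) \pmod k$; this is the standard sum construction.

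First, $c$ is proper. Adjacent vertices differ in exactly one coordinate, say $(g,h)$ and $(g',h)$ with $gg'\in E(G)$. Then $c$ differs on them by $\alpha(g)-\alpha(g')\neq 0$.

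Second, $c$ is $2$-frugal. The neighbors of $(g,h)$ split into $G$-neighbors $(g',h)$ and $H$-neighbors $(g,h')$. The $G$-neighbors receive the colors $\alpha(g')+\beta(h)$ for $g'\in N_G(g)$. These are pairwise distinct, since any two neighbors of $g$ are at distance at most $2$ and so get distinct $\alpha$-values. Likewise the $H$-neighbors receive pairwise distinct colors. Thus each color appears at most once among the $G$-neighbors and at most once among the $H$-neighbors, so at most twice in total. This proves $\chi_2^f(G\cp H)\le k$. Notably, the argument only uses that $\alpha$ and $\beta$ are injective on open neighborhoods and proper; that is weaker than a full $2$-distance coloring, but stating the bound with $\chi_2$ is fine.

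For sharpness, the lower bound is attained when $H=K_1$ or, more interestingly, when $G=H=K_n$. Then $\chi_2^f(K_n\cp K_n)\ge n$ from the lower bound, and $\chi_2(K_n)=n$, so both bounds coincide at $n$. The main step needing care is the upper-bound sharpness in a case where the two bounds differ. I would try $G=H=K_{1,r}$ or a path: for paths, $\chi_2(P_n)=3$ and the grid $P_n\cp P_n$ has $\Delta=4$, forcing $\chi_2^f\ge 3$ by Observation~\ref{ob:lowerboundfor2}, so the upper bound is attained. I expect choosing a clean example where the lower bound is strict to be the only non-routine part; stars $K_{1,r}\cp K_{1,r}$ versus $K_1$ factors are the candidates I would check first.
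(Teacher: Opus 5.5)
Your proposal is correct and follows the paper's proof essentially step by step. The lower bound comes from restricting to a fiber, and the upper bound from the sum $\alpha(g)+\beta(h) \pmod k$ of two $2$-distance colorings. For sharpness, $K_n\cp K_n$ attains the lower bound and the grid $P_n\cp P_n$ with $n\ge 3$ attains the upper bound. In the grid, $\chi_2^f(P_n)=2<3$ already makes the lower bound strict, so you do not need to search further over stars.

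One small correction to your aside: being proper and injective on open neighborhoods is not weaker than being a $2$-distance coloring. It is exactly equivalent, since any two vertices at distance at most $2$ are either adjacent or share a common neighbor.
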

\begin{proof}
We may assume that $k=\chi_2(G)\ge \chi_2(H)$. Let $c':V(G)\to [k]$ and $c'':V(H)\to [k]$ be $2$-distance colorings of $G$ and $H$, respectively, using (at most) $k$ colors. Define $c:V(G)\times V(H)$ as follows: $c\big{(}(g,h)\big{)}=c'(g)+c''(h) \pmod k$. Since $c'$ and $c''$ are proper colorings of $G$ and $H$, respectively, we immediately infer that $c$ is a proper coloring of $G\cp H$. Since $c'$ is a $2$-distance coloring of $G$, we infer that for every vertex $(g,h)\in V(G\cp H)$ and every two distinct vertices $(g_1,h),(g_2,h)$, where $g_1,g_2\in N_G(g)$, we have $c'(g_1)\ne c'(g_2)$. Therefore, $c'(g_1)+c''(h)\ne c'(g_2)+c''(h) \pmod k$, which yields $c(g_{1},h)\ne c(g_{2},h)$. In a similar way, using the fact that $c''$ is a $2$-distance coloring of $H$, we infer that all vertices in $\{g\}\times N_H(h)$ receive pairwise distinct colors with respect to coloring $c$. Altogether, we derive that for each color in $[k]$, there are at most two vertices in $N_{G\cp H}\big{(}(g,h)\big{)}$ that receive that color by $c$. Hence, $c$ is a $2$-frugal coloring of $G\cp H$, and so $\chi_{2}^{f}(G\cp H)\le k=\max\{\chi_2(G),\chi_2(H)\}$. 

For the sharpness of the upper bound, consider a Cartesian grid. Note that for any finite path on $n\ge3$ vertices, $\chi_2(P_n)=3$. The same holds for the two-way infinite path $\mathbb{Z}$, notably $\chi_2(\mathbb{Z})=3$. Figure~\ref{fig:grids1}(a) shows a $2$-frugal coloring of $\mathbb{Z}\cp \mathbb{Z}$ using $3$-colors, while it is clear that two colors do not suffice even for $P_n\cp P_n$ where $n\ge 3$. 

The lower bound is trivial, since a $2$-frugal coloring of $G\cp H$ restricted to a $G$-fiber (resp.\ $H$-fiber) is a $2$-frugal coloring of that fiber, which is isomorphic to $G$ (resp.\ $H$). Hence, $\chi_{2}^{f}(G\cp H)\ge \chi_{2}^{f}(G)$ and $\chi_{2}^{f}(G\cp H)\ge \chi_{2}^{f}(H)$. 
Note that $\chi_2(K_n)=\chi_{2}^{f}(K_n)$ for any $n\in \mathbb{N}$. Therefore, if $m\ge n$, we get $m=\max\{\chi_2(K_m),\chi_2(K_n)\}=\max\{\chi_{2}^{f}(K_m),\chi_{2}^{f}(K_n)\}$. As both lower and upper bounds coincide in this case, we get $\chi_{2}^{f}(K_m\cp K_n)=m$. 
\end{proof}

As an application of the upper bound in Theorem~\ref{thm:cartesian}, we present the exact value of the $2$-frugal chromatic number of torus graphs (the Cartesian products of cycles). Note that for the $2$-distance chromatic number of cycles, we have the following values:

\begin{align}\label{cycles}
\chi_2(C_n)=
\begin{cases}
3 & \mbox{if}\ n\equiv 0\!\!\!\pmod 3,\\
5 & \mbox{if}\ n=5,\\
4 & \textrm{otherwise}.
\end{cases}
\end{align}

\begin{proposition}\label{Torus}
For any integers $m,n\geq3$, 
\begin{align*}\chi_{2}^{f}(C_m\square C_n)=
\begin{cases}
3 & \mbox{if}\ m\equiv 0\!\!\!\pmod 3 \textrm{ and } n\equiv 0\!\!\!\pmod 3,\\
4 & \textrm{otherwise}.
\end{cases}
\end{align*}
\end{proposition}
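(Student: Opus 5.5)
The lower bounds come first. Since $C_m\square C_n$ is $4$-regular, Observation~\ref{ob:lowerboundfor2} with $t=2$ gives $\chi_2^f(C_m\square C_n)\ge\lceil 4/2\rceil+1=3$ for all $m,n\ge3$. If $m\equiv n\equiv 0 \pmod 3$, then (\ref{cycles}) gives $\chi_2(C_m)=\chi_2(C_n)=3$, and Theorem~\ref{thm:cartesian} yields $\chi_2^f(C_m\square C_n)\le 3$. This settles the first line of the statement.

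For the second line, assume without loss of generality that $m\not\equiv 0\pmod 3$. I first need to show that $3$ colors are not enough. Take a $2$-frugal $3$-coloring $c$. Every vertex has degree $4$ and only two colors other than its own, so equality holds in (\ref{Delta}). Hence every vertex has \emph{exactly} two neighbors of each of the other two colors. Counting alone gives only $3\mid mn$, which does not suffice (e.g.\ $m=4$, $n=3$). So I plan a local structural analysis of $c$ around a vertex $(i,j)$ of color $a$. There are two types. In the first type, the two neighbors in the $C_m$-fiber have distinct colors $b,c$, and then so do the two neighbors in the $C_n$-fiber. In the second type, both fiber neighbors in one direction are $b$ and both in the other direction are $c$. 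I would show that the second type propagates to a contradiction. Following the exact-two-of-each condition around the $4$-cycles $(i,j),(i+1,j),(i+1,j+1),(i,j+1)$ forces $(i\pm1,j\pm1)$ to have color $a$. It then forces the $b$-neighbor $(i,j+1)$ to have three neighbors of one color, or two of its own type with incompatible colors. Consequently every vertex is of the first type, and every $C_m$-fiber is a proper coloring of $C_m$ in which any two vertices at distance $2$ differ. With three colors this means each $C_m$-fiber is colored periodically as $abcabc\ldots$, which forces $3\mid m$, a contradiction. This case analysis (ruling out the ``second type'' configuration) is the step I expect to be the main obstacle, and I would first attempt it by chasing colors on a $3\times 3$ window centered at $(i,j)$.

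It remains to show $\chi_2^f(C_m\square C_n)\le 4$ whenever not both of $m,n$ are divisible by $3$. If neither $m$ nor $n$ equals $5$, then (\ref{cycles}) gives $\chi_2(C_m),\chi_2(C_n)\le4$, and Theorem~\ref{thm:cartesian} finishes. If, say, $m=5$ (or both equal $5$), Theorem~\ref{thm:cartesian} only gives $5$, so an explicit construction is needed. I would try colorings of the form $c(i,j)=f(i)+g(j)\pmod 4$. Here $f$ is a proper coloring of $C_5$ and $g$ is a $2$-distance coloring of $C_n$, so the column neighbors of every vertex receive distinct colors. Whenever $f(i-1)=f(i+1)$, the repeated row color must avoid both column colors, which translates into the condition $g(j\pm1)-g(j)\ne f(i\pm1)-f(i)\pmod 4$. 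I would choose $f$ and $g$ to meet this, or failing that, give a direct periodic pattern on $C_5\square C_n$ obtained by splicing blocks of widths $3$ and $4$ in the $C_n$-direction (writing $n=3a+4b$ for $n\ge 6$, with $n=3,4,5$ handled by hand). Each such pattern only needs to be checked locally, on the vertices near the seams between blocks.
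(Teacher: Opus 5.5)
Your lower bound is correct, and it is tidier than the paper's. The paper colors the corner $4$-cycle $v_{1,1}v_{1,n}v_{m,n}v_{m,1}$, propagates colors by hand for $m\equiv 1\pmod 3$, and leaves $m\equiv 2\pmod 3$ to ``similar reasoning''. Your ``second type'' needs no window-chasing at all. If both neighbors of $x$ in one fiber direction have color $b$ and both in the other have color $c$, then properness alone forces the four diagonal vertices to have color $a$. So each $b$-neighbor of $x$ sees $a$ three times. Hence every fiber is $2$-distance $3$-colored, which gives $3\mid m$ and $3\mid n$ uniformly.

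The genuine gap is the upper bound when a factor is $C_5$. This is exactly the part that Theorem~\ref{thm:cartesian} does not supply, and you exhibit no coloring there. Your additive plan cannot work in general:
\begin{itemize}
\item Since $\chi_2(C_5)=5$, the proper $4$-coloring $f$ of $C_5$ has an $i$ with $f(i-1)=f(i+1)$. Put $d=f(i+1)-f(i)$; your condition then forces every difference $g(j+1)-g(j)$ to avoid $\pm d$.
\item If $d$ is odd, all differences equal $2$, so $g(j-1)=g(j+1)$, contradicting that $g$ is a $2$-distance coloring.
\item If $d=2$, all differences are odd and consecutive ones may not cancel. So they are all $1$ or all $3$, which forces $4\mid n$.
\end{itemize}
Thus the scheme only settles $n\equiv 0\pmod 4$ (e.g.\ $f=(0,1,2,0,2)$, $g(j)=j$). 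It misses $C_5\square C_3$, $C_5\square C_5$, $C_5\square C_6$, and so on.

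The fallback is only announced, not carried out. To splice blocks of widths $3$ and $4$, you must produce a $2$-frugal $4$-coloring of $C_5\square C_3$ and one of $C_5\square C_4$ that contain the same pair of consecutive $C_5$-fibers. Only then does every seam look locally like a seam inside one of the two tori. You also need a coloring of $C_5\square C_5$. Such compatible blocks are not automatic and must be checked on concrete patterns.

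This is precisely the content of the paper's proof. It gives an explicit $3\times 5$ pattern, and blocks $\mathsf{A}$--$\mathsf{F}$ with $4,5,4,6,4,7$ fibers, combined according to the length of the other factor modulo $4$. Each long block begins with the same fibers as the corresponding $4$-fiber block. Until you supply explicit colorings of this kind, $\chi_2^f(C_5\square C_n)\le 4$ for $4\nmid n$ remains unproved.
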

\begin{proof}
For convenience, we write $T_{m,n}=C_m\square C_n$. If $m\equiv0$ (mod $3$) and $n\equiv0$ (mod $3$), then $\chi_{2}^{f}(T_{m,n})\geq\lceil(\Delta(T_{m,n})/2\rceil+1=3$, and by Theorem \ref{thm:cartesian}, we have $\chi_{2}^{f}(T_{m,n})=3$.
 
If at least one of $m$ and $n$ is congruent to $1$ modulo $3$, say $m\equiv1$ (mod $3$), we claim that $\chi_{2}^{f}(T_{m,n})\geq4$. We prove the claim by contradiction. Suppose that $f:V(T_{m,n})\rightarrow[3]$ is a $2$-frugal coloring. Let $v_{i,j}$ denote the vertex in the $i$th row and $j$th column in the $(m\times n)$-matrix form of $T_{m,n}$. Consider the $4$-cycle $v_{1,1}v_{1,n}v_{m,n}v_{m,1}v_{1,1}$. We first suppose that $f(v_{1,1})=f(v_{m,n})=1$ and $f(v_{1,n})=f(v_{m,1})=2$. Since $v_{1,2}, v_{2,1}, v_{1,n}$ and $v_{m,1}$ are adjacent to $v_{1,1}$, it follows that $f(v_{1,2})=f(v_{2,1})=3$. Similarly, $f(v_{1,n-1})=f(v_{2,n})=3$. This is a contradiction because $v_{2,1}v_{2,n}\in E(T_{m,n})$. 

Next, we suppose that $f(v_{1,1})=f(v_{m,n})=1$, $f(v_{1,n})=2$, and $f(v_{m,1})=3$. In view of this, $f(v_{1,n-1})=f(v_{2,n})=3$, and $f(v_{2,1})=2$ since $v_{2,1}v_{1,1},v_{2,1}v_{2,n}\in E(T_{m,n})$. Hence, $f(v_{1,2})=3$. In a similar fashion, we have $f(v_{m,n-1})=2$, $f(v_{m-1,n})=3$ and $f(v_{m-1,1})=f(v_{m,2})=2$. 
Note that $m>4$, for otherwise $f(v_{2,1})=f(v_{3,1})=2$, which is impossible. If $n=3$, then $f(v_{1,1})=f(v_{m,1})$, which is a contradiction (see the only two possible patterns depicted in Figure \ref{Pat1}). On the other hand, we have $n>4$ because $v_{1,2}$ and $v_{1,n-1}$ both receive color $3$. Due to this, because $v_{1,n}$ and $v_{2,1}$ are adjacent to the vertex $v_{2,n}$, it follows that $f(v_{2,n-1})=f(v_{3,n})=1$. Since $v_{3,n}v_{3,1},v_{2,1}v_{3,1}\in E(G)$, it follows that $f(v_{3,1})=3$. Because $v_{3,1}, v_{3,n-1},v_{2,n}$ and $v_{4,n}$ are the neighbors of $v_{3,n}$, we infer that $f(v_{3,n-1})=f(v_{4,n})=2$. Iterating this process, we end up with $f(v_{m-2,1})=f(v_{m-1,1})=2$ or $f(v_{m-1,1})=f(v_{m,1})=3$, which is a contradiction. Hence, $\chi_{2}^{f}(T_{m,n})\geq4$.

\begin{figure}[ht]
\centering
\begin{tikzpicture}[scale=.02, transform shape]
\node [draw, shape=circle] (v1) at (0,0) {};
\node [draw, shape=circle] (v2) at (90,0) {};
\node [draw, shape=circle] (v3) at (0,-170) {};
\node [draw, shape=circle] (v4) at (90,-170) {};

\node [scale=50] at (10,-10) {1};
\node [scale=50] at (45,-10) {2};
\node [scale=50] at (80,-10) {3};

\node [scale=50] at (10,-30) {3};
\node [scale=50] at (45,-30) {1};
\node [scale=50] at (80,-30) {2};

\node [scale=50] at (10,-50) {2};
\node [scale=50] at (45,-50) {3};
\node [scale=50] at (80,-50) {1};

\node [scale=50] at (10,-97) {1};
\node [scale=50] at (45,-97) {2};
\node [scale=50] at (80,-97) {3};

\node [scale=50] at (10,-117) {3};
\node [scale=50] at (45,-117) {1};
\node [scale=50] at (80,-117) {2};

\node [scale=50] at (10,-137) {2};
\node [scale=50] at (45,-137) {3};
\node [scale=50] at (80,-137) {1};

\draw[dashed] (0,-149)--(90,-149);

\node [scale=50] at (10,-160) {1};
\node [scale=50] at (45,-160) {2};
\node [scale=50] at (80,-160) {3};

\node [scale=50] at (45,-68) {$\vdots$};
\node [scale=50] at (10,-68) {$\vdots$};
\node [scale=50] at (80,-68) {$\vdots$};

\draw (v1)--(v2)--(v4)--(v3)--(v1);


\node [draw, shape=circle] (u1) at (160,0) {};
\node [draw, shape=circle] (u2) at (250,0) {};
\node [draw, shape=circle] (u3) at (160,-170) {};
\node [draw, shape=circle] (u4) at (250,-170) {};

\node [scale=50] at (170,-10) {1};
\node [scale=50] at (205,-10) {2};
\node [scale=50] at (240,-10) {3};

\node [scale=50] at (170,-30) {2};
\node [scale=50] at (205,-30) {3};
\node [scale=50] at (240,-30) {1};

\node [scale=50] at (170,-50) {3};
\node [scale=50] at (205,-50) {1};
\node [scale=50] at (240,-50) {2};

\node [scale=50] at (170,-97) {1};
\node [scale=50] at (205,-97) {2};
\node [scale=50] at (240,-97) {3};

\node [scale=50] at (170,-117) {2};
\node [scale=50] at (205,-117) {3};
\node [scale=50] at (240,-117) {1};

\node [scale=50] at (170,-137) {3};
\node [scale=50] at (205,-137) {1};
\node [scale=50] at (240,-137) {2};

\draw[dashed] (160,-149)--(250,-149);

\node [scale=50] at (170,-160) {1};
\node [scale=50] at (205,-160) {2};
\node [scale=50] at (240,-160) {3};

\node [scale=50] at (205,-68) {$\vdots$};
\node [scale=50] at (170,-68) {$\vdots$};
\node [scale=50] at (240,-68) {$\vdots$};

\draw (u1)--(u2)--(u4)--(u3)--(u1);

\end{tikzpicture}
\caption{\small {When $n=3$ and $m\equiv1$ (mod $3$), the torus graph $T_{m,3}$ has no $2$-frugal coloring with three colors.}}\label{Pat1}
\end{figure}

When $m\equiv2$ (mod $3$) or $n\equiv2$ (mod $3$), similar reasoning shows that $\chi_{2}^{f}(T_{m,n})\geq4$. Therefore, when $m\neq5$ and $n\neq5$, we have $\chi_{2}^{f}(T_{m,n})=4$ in view of Theorem \ref{thm:cartesian} and (\ref{cycles}). So, it remains for us to obtain the exact value of the parameter when at least one of the factors is $C_{5}$. We assume, without loss of generality, that $n=5$. Let $m=4t+j$ for some integer $t\geq0$, where $j\in \{0,1,2,3\}$. We need to consider four possibilities depending on $j$. If $t=0$, then the $(3\times5)$-pattern in Figure \ref{Pat2} gives us a $2$-frugal coloring of $T_{3,5}$ with four colors. Hence, $\chi_{2}^{f}(T_{3,5})=4$.

\begin{figure}[ht]
\centering
\begin{tikzpicture}[scale=.02, transform shape]
\node [draw, shape=circle] (v1) at (0,0) {};
\node [draw, shape=circle] (v2) at (120,0) {};
\node [draw, shape=circle] (v3) at (120,-70) {};
\node [draw, shape=circle] (v4) at (0,-70) {};

\node [scale=50] at (10,-10) {2};
\node [scale=50] at (35,-10) {4};
\node [scale=50] at (60,-10) {1};
\node [scale=50] at (85,-10) {4};
\node [scale=50] at (110,-10) {3};

\node [scale=50] at (10,-35) {3};
\node [scale=50] at (35,-35) {2};
\node [scale=50] at (60,-35) {3};
\node [scale=50] at (85,-35) {1};
\node [scale=50] at (110,-35) {4};

\node [scale=50] at (10,-60) {4};
\node [scale=50] at (35,-60) {1};
\node [scale=50] at (60,-60) {2};
\node [scale=50] at (85,-60) {3};
\node [scale=50] at (110,-60) {1};

\draw (v1)--(v2)--(v3)--(v4)--(v1);

\end{tikzpicture}
\caption{{\small An optimal $2$-frugal coloring of $T_{3,5}$.}}\label{Pat2}
\end{figure} 
 Therefore, we may assume that $t\geq1$, and we consider four possibilities. If
\begin{itemize}\vspace{-2.5mm}
\item $j=0$, then $t$ copies of the pattern $\mathsf{A}$ in Figure \ref{Pat2} provides a $2$-frugal coloring of $T_{4t,5}$ with four colors,\vspace{-2.5mm}
\item $j=1$, then $t-1$ successive copies of $\mathsf{A}$ along with one copy of the pattern $\mathsf{B}$ represent a $2$-frugal coloring of $T_{4t+1,5}$,\vspace{-2.5mm}
\item $j=2$, then $t-1$ successive copies of $\mathsf{C}$ and one copy of $\mathsf{D}$ give a $2$-frugal coloring of $T_{4t+2,5}$, and\vspace{-2.5mm}
\item $j=3$, then $t-1$ successive copies of $\mathsf{E}$ and one copy of the pattern $\mathsf{F}$ yield a $2$-frugal coloring of $T_{4t+3,5}$.
\end{itemize}\vspace{-2.5mm}

\begin{figure}[ht]
\centering
\begin{tikzpicture}[scale=.02, transform shape]
\node [draw, shape=circle] (v1) at (-310,0) {};
\node [draw, shape=circle] (v2) at (-190,0) {};
\node [draw, shape=circle] (v3) at (-310,-95) {};
\node [draw, shape=circle] (v4) at (-190,-95) {};

\node [scale=50] at (-300,-10) {1};
\node [scale=50] at (-275,-10) {2};
\node [scale=50] at (-250,-10) {1};
\node [scale=50] at (-225,-10) {2};
\node [scale=50] at (-200,-10) {3};

\node [scale=50] at (-300,-35) {3};
\node [scale=50] at (-275,-35) {4};
\node [scale=50] at (-250,-35) {3};
\node [scale=50] at (-225,-35) {4};
\node [scale=50] at (-200,-35) {1};

\node [scale=50] at (-300,-60) {2};
\node [scale=50] at (-275,-60) {1};
\node [scale=50] at (-250,-60) {2};
\node [scale=50] at (-225,-60) {1};
\node [scale=50] at (-200,-60) {4};

\node [scale=50] at (-300,-85) {4};
\node [scale=50] at (-275,-85) {3};
\node [scale=50] at (-250,-85) {4};
\node [scale=50] at (-225,-85) {3};
\node [scale=50] at (-200,-85) {2};

\draw (v1)--(v2)--(v4)--(v3)--(v1);
\node [scale=53] at (-250,-110) {$\mathsf{A}$};


\node [draw, shape=circle] (w1) at (-165,0) {};
\node [draw, shape=circle] (w2) at (-45,0) {};
\node [draw, shape=circle] (w3) at (-165,-120) {};
\node [draw, shape=circle] (w4) at (-45,-120) {};

\node [scale=50] at (-155,-10) {1};
\node [scale=50] at (-130,-10) {2};
\node [scale=50] at (-105,-10) {1};
\node [scale=50] at (-80,-10) {2};
\node [scale=50] at (-55,-10) {3};

\node [scale=50] at (-155,-35) {3};
\node [scale=50] at (-130,-35) {4};
\node [scale=50] at (-105,-35) {3};
\node [scale=50] at (-80,-35) {4};
\node [scale=50] at (-55,-35) {1};

\node [scale=50] at (-155,-60) {2};
\node [scale=50] at (-130,-60) {1};
\node [scale=50] at (-105,-60) {2};
\node [scale=50] at (-80,-60) {1};
\node [scale=50] at (-55,-60) {4};

\node [scale=50] at (-155,-85) {1};
\node [scale=50] at (-130,-85) {4};
\node [scale=50] at (-105,-85) {3};
\node [scale=50] at (-80,-85) {2};
\node [scale=50] at (-55,-85) {3};

\node [scale=50] at (-155,-110) {2};
\node [scale=50] at (-130,-110) {3};
\node [scale=50] at (-105,-110) {4};
\node [scale=50] at (-80,-110) {1};
\node [scale=50] at (-55,-110) {4};

\draw (w1)--(w2)--(w4)--(w3)--(w1);
\node [scale=53] at (-105,-135) {$\mathsf{B}$};


\node [draw, shape=circle] (x1) at (-20,0) {};
\node [draw, shape=circle] (x2) at (100,0) {};
\node [draw, shape=circle] (x3) at (-20,-95) {};
\node [draw, shape=circle] (x4) at (100,-95) {};

\node [scale=50] at (-10,-10) {1};
\node [scale=50] at (15,-10) {2};
\node [scale=50] at (40,-10) {3};
\node [scale=50] at (65,-10) {4};
\node [scale=50] at (90,-10) {3};

\node [scale=50] at (-10,-35) {3};
\node [scale=50] at (15,-35) {4};
\node [scale=50] at (40,-35) {1};
\node [scale=50] at (65,-35) {2};
\node [scale=50] at (90,-35) {1};

\node [scale=50] at (-10,-60) {2};
\node [scale=50] at (15,-60) {1};
\node [scale=50] at (40,-60) {4};
\node [scale=50] at (65,-60) {3};
\node [scale=50] at (90,-60) {4};

\node [scale=50] at (-10,-85) {4};
\node [scale=50] at (15,-85) {3};
\node [scale=50] at (40,-85) {2};
\node [scale=50] at (65,-85) {1};
\node [scale=50] at (90,-85) {2};

\draw (x1)--(x2)--(x4)--(x3)--(x1);
\node [scale=53] at (40,-110) {$\mathsf{C}$};


\node [draw, shape=circle] (y1) at (125,0) {};
\node [draw, shape=circle] (y2) at (245,0) {};
\node [draw, shape=circle] (y3) at (125,-145) {};
\node [draw, shape=circle] (y4) at (245,-145) {};

\node [scale=50] at (135,-10) {1};
\node [scale=50] at (160,-10) {2};
\node [scale=50] at (185,-10) {3};
\node [scale=50] at (210,-10) {4};
\node [scale=50] at (235,-10) {3};

\node [scale=50] at (135,-35) {3};
\node [scale=50] at (160,-35) {4};
\node [scale=50] at (185,-35) {1};
\node [scale=50] at (210,-35) {2};
\node [scale=50] at (235,-35) {1};

\node [scale=50] at (135,-60) {2};
\node [scale=50] at (160,-60) {1};
\node [scale=50] at (185,-60) {4};
\node [scale=50] at (210,-60) {3};
\node [scale=50] at (235,-60) {4};

\node [scale=50] at (135,-85) {4};
\node [scale=50] at (160,-85) {3};
\node [scale=50] at (185,-85) {2};
\node [scale=50] at (210,-85) {1};
\node [scale=50] at (235,-85) {2};

\node [scale=50] at (135,-110) {3};
\node [scale=50] at (160,-110) {1};
\node [scale=50] at (185,-110) {4};
\node [scale=50] at (210,-110) {3};
\node [scale=50] at (235,-110) {1};

\node [scale=50] at (135,-135) {2};
\node [scale=50] at (160,-135) {4};
\node [scale=50] at (185,-135) {1};
\node [scale=50] at (210,-135) {2};
\node [scale=50] at (235,-135) {4};

\draw (y1)--(y2)--(y4)--(y3)--(y1);
\node [scale=53] at (185,-160) {$\mathsf{D}$};


\node [draw, shape=circle] (z1) at (270,0) {};
\node [draw, shape=circle] (z2) at (390,0) {};
\node [draw, shape=circle] (z3) at (270,-95) {};
\node [draw, shape=circle] (z4) at (390,-95) {};

\node [scale=50] at (280,-10) {1};
\node [scale=50] at (305,-10) {2};
\node [scale=50] at (330,-10) {3};
\node [scale=50] at (355,-10) {4};
\node [scale=50] at (380,-10) {3};

\node [scale=50] at (280,-35) {3};
\node [scale=50] at (305,-35) {4};
\node [scale=50] at (330,-35) {1};
\node [scale=50] at (355,-35) {2};
\node [scale=50] at (380,-35) {4};

\node [scale=50] at (280,-60) {2};
\node [scale=50] at (305,-60) {1};
\node [scale=50] at (330,-60) {4};
\node [scale=50] at (355,-60) {3};
\node [scale=50] at (380,-60) {1};

\node [scale=50] at (280,-85) {4};
\node [scale=50] at (305,-85) {3};
\node [scale=50] at (330,-85) {2};
\node [scale=50] at (355,-85) {1};
\node [scale=50] at (380,-85) {2};

\draw (z1)--(z2)--(z4)--(z3)--(z1);
\node [scale=53] at (330,-110) {$\mathsf{E}$};


\node [draw, shape=circle] (o1) at (415,0) {};
\node [draw, shape=circle] (o2) at (535,0) {};
\node [draw, shape=circle] (o3) at (415,-170) {};
\node [draw, shape=circle] (o4) at (535,-170) {};

\node [scale=50] at (425,-10) {1};
\node [scale=50] at (450,-10) {2};
\node [scale=50] at (475,-10) {3};
\node [scale=50] at (500,-10) {4};
\node [scale=50] at (525,-10) {3};

\node [scale=50] at (425,-35) {3};
\node [scale=50] at (450,-35) {4};
\node [scale=50] at (475,-35) {1};
\node [scale=50] at (500,-35) {2};
\node [scale=50] at (525,-35) {4};

\node [scale=50] at (425,-60) {2};
\node [scale=50] at (450,-60) {1};
\node [scale=50] at (475,-60) {4};
\node [scale=50] at (500,-60) {3};
\node [scale=50] at (525,-60) {1};

\node [scale=50] at (425,-85) {4};
\node [scale=50] at (450,-85) {3};
\node [scale=50] at (475,-85) {2};
\node [scale=50] at (500,-85) {1};
\node [scale=50] at (525,-85) {2};

\node [scale=50] at (425,-110) {3};
\node [scale=50] at (450,-110) {2};
\node [scale=50] at (475,-110) {1};
\node [scale=50] at (500,-110) {3};
\node [scale=50] at (525,-110) {4};

\node [scale=50] at (425,-135) {2};
\node [scale=50] at (450,-135) {4};
\node [scale=50] at (475,-135) {3};
\node [scale=50] at (500,-135) {4};
\node [scale=50] at (525,-135) {1};

\node [scale=50] at (425,-160) {4};
\node [scale=50] at (450,-160) {1};
\node [scale=50] at (475,-160) {2};
\node [scale=50] at (500,-160) {1};
\node [scale=50] at (525,-160) {2};

\draw (o1)--(o2)--(o4)--(o3)--(o1);
\node [scale=53] at (475,-185) {$\mathsf{F}$};

\end{tikzpicture}\vspace{-8mm}
\caption{{\small Patterns used for exhibiting an optimal $2$-frugal coloring of $T_{m,5}$ for $m\geq4$.}}\label{Pat3}
\end{figure}

In either case we have shown that there exists a $2$-frugal coloring of $T_{m,5}$ with four colors for each integer $m\geq3$, Thus, $\chi_{2}^{f}(T_{m,5})=4$ for all integers $m\geq3$. 
\end{proof}

The following upper bound on the $2$-frugal chromatic number of the strong (resp.~direct) product of two graphs uses an optimal $2$-frugal coloring of one factor and an optimal $2$-distance (resp.~injective) coloring of the other factor. Recall that a function $f:V(G)\rightarrow[k]$ is an {\em injective $k$-coloring} if no vertex $v$ is adjacent to two vertices $u$ and $w$ with $f(u)=f(w)$. The minimum $k$ for which a graph $G$ admits an injective $k$-coloring is the {\em injective chromatic number} of $G$, denoted by $\chi_{i}(G)$. The study of this concept was initiated in \cite{hkss} (see also \cite{bsy} and the references therein).

\begin{theorem}\label{thm:strong-direct}
Let $G$ and $H$ be arbitrary connected graphs. Then,\vspace{1mm}\\ 
$(i)$ $\chi_{2}^{f}(G\boxtimes H)\le \min\{\chi_2(G)\chi_{2}^{f}(H),\chi_2(H)\chi_{2}^{f}(G)\}$ and\vspace{1mm}\\
$(ii)$ $\chi_{2}^{f}(G\times H)\le \min\{\chi_{i}(G)\chi_{2}^{f}(H),\chi_{i}(H)\chi_{2}^{f}(G)\}$.\vspace{1mm}\\
These bounds are sharp.
\end{theorem}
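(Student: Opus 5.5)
Both bounds come from product colorings. For $(i)$, take a $2$-distance coloring $c'$ of $G$ with $\chi_2(G)$ colors and a $2$-frugal coloring $c''$ of $H$ with $\chi_2^f(H)$ colors, and color $(g,h)$ by the pair $c(g,h)=(c'(g),c''(h))$. By symmetry of the strong product, this single construction gives both terms of the minimum.

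First, properness. Let $(g,h)$ and $(g',h')$ be adjacent. If $g\ne g'$, then $gg'\in E(G)$, so $c'(g)\ne c'(g')$. Otherwise $g=g'$ and $hh'\in E(H)$, so $c''(h)\ne c''(h')$.

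Second, frugality. Fix $(g,h)$ and suppose two distinct neighbors $(g_1,h_1)$ and $(g_2,h_2)$ receive the same pair. Each $g_i$ lies in $N_G[g]$, so $g_1$ and $g_2$ are at distance at most $2$ in $G$. Since $c'(g_1)=c'(g_2)$ and $c'$ is a $2$-distance coloring, $g_1=g_2$. Call this common vertex $g_0$. Then $h_1\ne h_2$, both lie in $N_H[h]$, and $c''(h_1)=c''(h_2)$.
\begin{itemize}
\item If $g_0=g$, then $h_1,h_2\in N_H(h)$, and $2$-frugality of $c''$ at $h$ allows at most two such vertices per color.
\item If $g_0\ne g$, the candidates are $(g_0,h)$ together with $(g_0,x)$ for $x\in N_H(h)$. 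Here I must bound by $2$ the number of vertices of $N_H[h]$ with a given $c''$-color. Since $c''$ is proper, $h$ itself cannot share a color with its neighbors, so a color class meets $N_H[h]$ either in $\{h\}$ or in at most two neighbors of $h$.
\end{itemize}
So every color appears at most twice in $N_{G\boxtimes H}((g,h))$. The closed-neighborhood case is the step needing care, and properness of $c''$ resolves it.

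For $(ii)$, use an injective coloring $c'$ of $G$ with $\chi_i(G)$ colors and a $2$-frugal coloring $c''$ of $H$, again coloring by pairs. For properness, adjacent vertices $(g,h)$ and $(g',h')$ satisfy $hh'\in E(H)$, so $c''$ already separates them; the injective coloring need not be proper, and it does not have to be. For frugality, neighbors of $(g,h)$ have the form $(g_1,h_1)$ with $g_1\in N_G(g)$ and $h_1\in N_H(h)$. Two such neighbors with equal pairs have $c'(g_1)=c'(g_2)$ with both in $N_G(g)$, so injectivity forces $g_1=g_2$. The remaining coordinates $h_1,h_2\in N_H(h)$ share a $c''$-color, which happens for at most two of them by frugality at $h$.

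For sharpness I would exhibit small examples. Take $G=K_1$ in the strong product, where the bound reads $\chi_2^f(H)$, or better a nontrivial connected case such as $K_m\boxtimes K_n=K_{mn}$, where $\chi_2(K_m)\chi_2^f(K_n)=mn$. For the direct product, take $K_2\times H$: here $\chi_i(K_2)=1$ and $K_2\times H$ is the bipartite double cover of $H$, so I would pick $H$ with $\chi_2^f(K_2\times H)=\chi_2^f(H)$, for instance $H=K_{1,n}$ with $n$ even, using Corollary~\ref{cor:forest} on the forest $K_2\times K_{1,n}$. The main obstacle is only this choice of clean sharpness examples; the coloring arguments themselves are routine.
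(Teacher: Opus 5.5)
Your upper-bound arguments are the same as the paper's. In both, $(g,h)$ is colored by the pair $(c'(g),c''(h))$, where $c'$ is a $2$-distance (resp.\ injective) coloring and $c''$ a $2$-frugal coloring. The paper argues by contradiction from three equally colored neighbors and uses properness of $c''$ to exclude $h_i=h$, which is exactly your closed-neighborhood case. For $(ii)$ the paper only says the proof is similar. Your version fills this in correctly, including the observation that properness in $G\times H$ comes from $c''$ alone, so $c'$ need not be proper.

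Your route genuinely differs in the sharpness examples.

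\textbf{The paper's examples.} It uses $C_m\boxtimes C_n$ (with $3\mid m$, $n$ even, $5\nmid mn$) and $C_m\times C_n$ (with $4\mid m$, $n$ even, $3\nmid mn$). In both, the product bound equals $\lceil\Delta/2\rceil+2$. A counting argument then shows that $\lceil\Delta/2\rceil+1$ colors are impossible: on a regular graph this value forces every vertex to have exactly two neighbors in each other class. That makes all classes equal in size, so the number of colors would divide $mn$, contradicting the divisibility conditions. So the paper shows the product bound is attained even when it strictly exceeds the basic lower bound.

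\textbf{Your examples.} You use $K_1\boxtimes H\cong H$, $K_m\boxtimes K_n\cong K_{mn}$ and $K_2\times K_{1,n}\cong 2K_{1,n}$. These are valid: in each case $\chi_2^f$ of the product equals one of the two terms of the minimum, so the proved inequality forces equality with the minimum itself. However, they attain the bound only where it coincides with a trivial lower bound ($\omega$, resp.\ $\lceil\Delta/2\rceil+1$). They therefore establish sharpness in the literal sense but say nothing about tightness in nontrivial situations.

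Two minor points: the restriction to even $n$ is unnecessary, and Corollary~\ref{cor:forest} is stated for trees, so it should be applied to each component of $2K_{1,n}$.
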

\begin{proof}
Let $c':V(G)\to [\chi_2(G)]$ be a 2-distance coloring of $G$ and $c'':V(H)\to[\chi_{2}^{f}(H)]$ be a $2$-frugal coloring of $H$. We define $c:V(G)\times V(H)\to[\chi_2(G)]\times[\chi_{2}^{f}(H)]$ by $c(g,h)=\big{(}c'(g),c''(h)\big{)}$. Evidently, $c$ is a proper coloring of $G\boxtimes H$ as $c'$ and $c''$ are proper colorings in $G$ and $H$, respectively. Suppose that there exists a vertex $(g,h)$ adjacent to distinct vertices $(g_{1},h_{1})$, $(g_{2},h_{2})$ and $(g_{3},h_{3})$ such that $c(g_{1},h_{1})=c(g_{2},h_{2})=c(g_{3},h_{3})$. In particular, $g\in N_{G}[g_{1}]\cap N_{G}[g_{2}]\cap N_{G}[g_{3}]$. Since $c'$ is a $2$-distance coloring of $G$, it necessarily follows that $g_{1}=g_{2}=g_{3}$. Due to this and the fact that $c''$ is a proper coloring of $H$, we deduce that $h\in N_{H}(h_{1})\cap N_{H}(h_{2})\cap N_{H}(h_{3})$. This is a contradiction as $c''$ is a $2$-frugal coloring of $H$. Therefore, $c$ is a $2$-frugal coloring of $G\boxtimes H$, and hence $\chi_{2}^{f}(G\boxtimes H)\leq|c\big{(}V(G)\times V(H)\big{)}|=\chi_2(G)\chi_{2}^{f}(H)$. This implies the upper bound $(i)$ as the strong product is commutative.

The upper bound in $(ii)$ can be proved in a similar way. Notably, the definition of an appropriate $2$-frugal coloring of $G\times H$ can be obtained by modifying the definition of coloring $c$ from the previous paragraph in such a way that $c'$ presents an injective coloring of $G$.

To see the upper bound $(i)$ is sharp, we consider the graph $C_{m}\boxtimes C_{n}$ for $m,n\geq3$ when $m\equiv0$ (mod $3$), $m\not\equiv0$ (mod $5$), $n\equiv0$ (mod $2$) and $n\not\equiv0$ (mod $5$). The upper bound in this situation leads to $\chi_{2}^{f}(C_{m}\boxtimes C_{n})\leq \chi_2(C_{m})\chi_{2}^{f}(C_{n})=6$. On the other hand, $\chi_{2}^{f}(C_{m}\boxtimes C_{n})\geq \Delta(C_{m}\boxtimes C_{n})/2+1=5$. Suppose that $\chi_{2}^{f}(C_{m}\boxtimes C_{n})=5$. It is then not hard to see that all color classes have the same cardinality. This contradicts the fact that neither of $m$ and $n$ is congruent to $0$ modulo $5$. Therefore, $\chi_{2}^{f}(C_{m}\boxtimes C_{n})=6$, which is equal to the upper bound in this case.
 
The sharpness of the upper bound $(ii)$ can be verified by considering the graph $C_{m}\times C_{n}$ for $m,n\geq3$ where $m\equiv0$ (mod $4$), $m\not\equiv0$ (mod $3$), $n\equiv0$ (mod $2$) and $n\not\equiv0$ (mod $3$). Then, we have $\chi_{2}^{f}(C_{m}\times C_{n})\leq \chi_{i}(C_{m})\chi_{2}^{f}(C_{n})=4$ by the upper bound. Moreover, $\chi_{2}^{f}(C_{m}\times C_{n})\geq \Delta(C_{m}\times C_{n})/2+1=3$. If $\chi_{2}^{f}(C_{m}\times C_{n})=3$, then all color classes have the same cardinality. This is impossible due to our choices of $m$ and $n$. Therefore, $\chi_{2}^{f}(C_{m}\times C_{n})=4$, which coincides with the upper bound in this case.
\end{proof}

\begin{theorem}\label{Lexi}
For any graphs $G$ and $H$, 
\begin{center}
$\chi_{2}^{f}(H)+\Big{\lceil}\frac{\Delta(G)|V(H)|}{2}\Big{\rceil}\leq \chi_{2}^{f}(G\circ H)\leq \chi_{2}^{f}(G)|V(H)|$.
\end{center} 
These bounds are sharp.
\end{theorem}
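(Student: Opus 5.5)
For the upper bound I would take a $\chi_{2}^{f}(G)$-coloring $c'$ of $G$ and an injective labelling $\lambda:V(H)\to[|V(H)|]$, and color $G\circ H$ by $c(g,h)=\big(c'(g),\lambda(h)\big)$, which uses $\chi_{2}^{f}(G)|V(H)|$ colors. Properness is direct. If $(g,h)(g',h')$ is an edge with $gg'\in E(G)$, then $c'(g)\neq c'(g')$. If instead $g=g'$, then $h\neq h'$ and the labels differ.

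For frugality, fix a vertex $(g,h)$ and a color $(a,\ell)$. Vertices of this color that are adjacent to $(g,h)$ have the form $(g',\lambda^{-1}(\ell))$. Here $g'$ is either equal to $g$, or is a neighbor of $g$ with $c'(g')=a$. Suppose $g'=g$ occurs. Then $a=c'(g)$, and no neighbor of $g$ has color $a$, so at most one vertex has this color. Otherwise every such $g'$ is a neighbor of $g$ colored $a$, and there are at most two of them because $c'$ is $2$-frugal. Hence $c$ is $2$-frugal.

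For the lower bound, fix a $\chi_{2}^{f}(G\circ H)$-coloring $c$ and a vertex $g$ of maximum degree in $G$. Let $X$ be the set of colors used on the fiber ${}^{g}H=\{g\}\times V(H)$, and let $Y$ be the set of colors used on $N_G(g)\times V(H)$. The restriction of $c$ to ${}^{g}H\cong H$ is a $2$-frugal coloring of that fiber, so $|X|\ge\chi_{2}^{f}(H)$. Every vertex of ${}^{g}H$ is adjacent to every vertex of $N_G(g)\times V(H)$, so $X\cap Y=\emptyset$ by properness. Moreover, $N_G(g)\times V(H)$ has $\Delta(G)|V(H)|$ vertices, all contained in the neighborhood of $(g,h)$ for any $h$. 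Frugality at $(g,h)$ therefore forces each color of $Y$ to appear at most twice among them, giving $|Y|\ge\lceil\Delta(G)|V(H)|/2\rceil$. Adding the two bounds gives the lower bound.

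For sharpness, I would look for small examples where the counting is tight. For the upper bound, take $G=K_2$ and $H=K_n$. Then $G\circ H=K_{2n}$, and $\chi_{2}^{f}(K_2)|V(K_n)|=2n$. For the lower bound, take $G=K_2$ and $H=\overline{K_{2}}$, so $G\circ H=C_4$ and $\chi_2^f(C_4)=2$; the bound reads $1+\lceil 2/2\rceil=2$. If a less degenerate family is desired, $G=K_{1,r}$ with $H=\overline{K_{2}}$ seems a natural candidate to check, though I expect finding nontrivial extremal examples to be the only real obstacle. The inequalities themselves are routine.
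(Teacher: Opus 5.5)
Your proof of both inequalities is correct and is essentially the paper's argument: for the upper bound, the product of a $\chi_{2}^{f}(G)$-coloring with distinct labels on $V(H)$; for the lower bound, the disjoint color sets on $\{g\}\times V(H)$ and $N_G(g)\times V(H)$ at a maximum-degree vertex $g$. For sharpness, your examples $K_2\circ K_n\cong K_{2n}$ and $K_2\circ\overline{K_2}\cong C_4$ are valid, and your candidate $K_{1,r}\circ\overline{K_2}\cong K_{2,2r}$ does attain the lower bound with value $r+1$; the paper instead takes $G\in\Psi_2$ and $H=K_m$, for which both bounds equal $(\Delta(G)/2+1)m$ simultaneously.
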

\begin{proof}
Let $\{B_{1},\ldots,B_{\chi_{2}^{f}(G)}\}$ be a $\chi_{2}^{f}(G)$-coloring and let $V(H)=\{h_{1},\ldots,h_{|V(H)|}\}$. Let $f:V(G\circ H)\to [\chi_{2}^{f}(G)|V(H)|]$ be defined by 
$f(g,h)=(t-1)n+j$, where $g\in B_t$ and $h=h_j$. 
Note that $B_{t}\times \{h_{j}\}$, for $t\in[\chi_{2}^{f}(G)]$ and $j\in[|V(H)|]$, are the color classes of $f$. 
Since $B_{t}$ is an I$2$F set in $G$, the adjacency role of the lexicographic product graphs shows that every color class $B_{t}\times \{h_{j}\}$ is an independent set in $G\circ H$. Suppose that there exists a vertex $(g,h)\in V(G\circ H)$ adjacent to three vertices $(g_{1},h_{j})$, $(g_{2},h_{j})$ and $(g_{3},h_{j})$ in $B_{t}\times \{h_j\}$ for some $t\in[\chi_{2}^{f}(G)]$ and $j\in[|V(H)|]$. This in particular implies that $g_{1},g_{2},g_{3}\in N_{G}[g]$. Since $B_{t}$ is an $2$FI set in $G$, we may assume that $g=g_{1}$ and $g_{2},g_{3}\in N_{G}(g)$. This is impossible because $B_{t}$ is an independent set in $G$. Therefore, $B_{t}\times \{h_{j}\}$ is an I$2$F set in $G\circ H$ for each $t\in[\chi_{2}^{f}(G)]$ and $j\in[|V(H)|]$. Thus, $\chi_{2}^{f}(G\circ H)\leq|f\big{(}V(G\circ H)\big{)}|=\chi_{2}^{f}(G)|V(H)|$.

Let $g$ be a vertex of maximum degree in $G$. Let $c$ be a $\chi_{2}^{f}(G\circ H)$-coloring. Clearly, $c$ assigns at least $\chi_{2}^{f}(H)$ colors to the vertices in $\{g\}\times V(H)$. The adjacency role of $G\circ H$ shows that every vertex in $\{g\}\times V(H)$ is adjacent to all vertices in $N_{G}(g)\times V(H)$. Therefore, $c\big{(}\{g\}\times V(H)\big{)}\cap c\big{(}N_{G}(g)\times V(H)\big{)}=\emptyset$. Moreover, no color class of $c$ has at least three vertices from $N_{G}(g)\times V(H)$. This implies that $c$ assigns at least $\lceil(\Delta(G)|V(H)|)/2\rceil$ colors to the vertices in $N_{G}(g)\times V(H)$. Thus, 
\begin{center}
$\chi_{2}^{f}(G\circ H)=|c\big{(}V(G\circ H)\big{)}|\geq|c\big{(}\{g\}\times V(H)\big{)}|+|c\big{(}N_{G}(g)\times V(H)\big{)}|\geq \chi_{2}^{f}(H)+\lceil(\Delta(G)|V(H)|)/2\rceil$.
\end{center}

The sharpness of the bounds can be verified by taking any graph $G\in \Psi_{2}$ (see Section~\ref{sec:generalbounds} for its definition) and $H\cong K_{m}$ for any positive integer $m$. Recall that $G$ is an $r$-partite graph such that for each partite set $X$ and $g\in X$, the vertex $g$ has precisely two neighbors in every other partite set. Invoking the proof of Theorem \ref{General1} with $k=2$, we have $\chi_{2}^{f}(G)=r=\Delta(G)/2+1$. Now, by taking $\chi_{2}^{f}(K_{m})=m$ into account, both lower and upper bounds equal $(\Delta(G)/2+1)m$. This completes the proof.
\end{proof}

\subsection{Graphs attaining the basic lower bound}

Invoking~\eqref{eq:lower} again, which is obtained from the basic lower bound in Observation~\ref{ob:lowerboundfor2} for $t=2$, we are interested in the question of which graphs $G$ attain the lower bound, that is, $$\fd(G)=\lceil\Delta(G)/2\rceil+1.$$ Note that this value intrinsically distinguishes $2$-frugal coloring from the standard coloring. For instance, by Corollary~\ref{cor:forest}, every forest $F$ satisfies this equality while $\chi(F)=2$. 

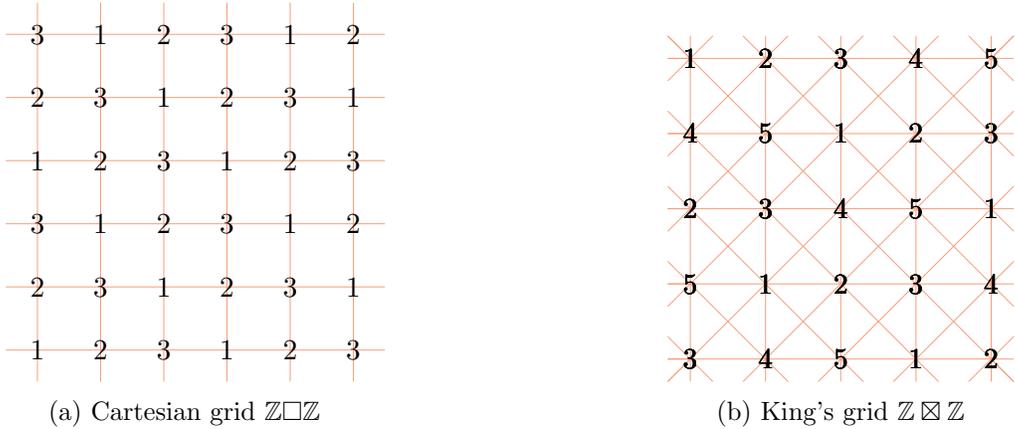
\begin{figure}[!ht]
\centering
	\begin{subfigure}[t]{0.4\textwidth}
		\centering
\begin{tikzpicture}[scale=0.42, line width=0.2pt]


  \draw[step=2cm,Melon,very thin] (-1,-1) grid (11,11);

  \draw (0,0) node {$1$};
  \draw (0,2) node {$2$};
    \draw (0,4) node {$3$};
  \draw (0,6) node {$1$};
  \draw (0,8) node {$2$};
    \draw (0,10) node {$3$};
  \draw (2,0) node {$2$};
  \draw (2,2) node {$3$};
    \draw (2,4) node {$1$};
  \draw (2,6) node {$2$};
  \draw (2,8) node {$3$};
    \draw (2,10) node {$1$};
     \draw (4,0) node {$3$};
  \draw (4,2) node {$1$};
    \draw (4,4) node {$2$};
  \draw (4,6) node {$3$};
  \draw (4,8) node {$1$};
    \draw (4,10) node {$2$};
      \draw (6,0) node {$1$};
  \draw (6,2) node {$2$};
    \draw (6,4) node {$3$};
  \draw (6,6) node {$1$};
  \draw (6,8) node {$2$};
    \draw (6,10) node {$3$};
      \draw (8,0) node {$2$};
  \draw (8,2) node {$3$};
    \draw (8,4) node {$1$};
  \draw (8,6) node {$2$};
  \draw (8,8) node {$3$};
    \draw (8,10) node {$1$};
     \draw (10,0) node {$3$};
  \draw (10,2) node {$1$};
    \draw (10,4) node {$2$};
  \draw (10,6) node {$3$};
  \draw (10,8) node {$1$};
    \draw (10,10) node {$2$};
\end{tikzpicture}
		\caption{Cartesian grid $\mathbb{Z}\square \mathbb{Z}$ \,\,\,  }
	\label{key2}
\end{subfigure} \hspace{6mm}
\begin{subfigure}[t]{0.48\textwidth}
	\centering
\begin{tikzpicture}[line width=0.2pt]
\tikzset{vertex/.style={inner sep=0pt, minimum size=0pt} 
}
	
	\def\m{5}
	\def\n{5}
	\def\dx{1}
	\def\dy{1}
	
	\foreach \i in {1,...,\m} {
		\foreach \j in {1,...,\n} {
			\node[vertex] (v\i\j) at ({(\i-1)*\dx},{(\j-1)*\dy}) {};
		}
	}

\draw[Melon,very thin] (0,0) -- (0,-0.3);
\draw[Melon,very thin] (1,0) -- (1,-0.3);
\draw[Melon,very thin] (2,0) -- (2,-0.3);
\draw[Melon,very thin] (3,0) -- (3,-0.3);
\draw[Melon,very thin] (4,0) -- (4,-0.3);

\draw[Melon,very thin] (0,4) -- (0,4.3);
\draw[Melon,very thin] (1,4) -- (1,4.3);
\draw[Melon,very thin] (2,4) -- (2,4.3);
\draw[Melon,very thin] (3,4) -- (3,4.3);
\draw[Melon,very thin] (4,4) -- (4,4.3);

\draw[Melon,very thin] (4,0) -- (4.3,0);
\draw[Melon,very thin] (4,1) -- (4.3,1);
\draw[Melon,very thin] (4,2) -- (4.3,2);
\draw[Melon,very thin] (4,3) -- (4.3,3);
\draw[Melon,very thin] (4,4) -- (4.3,4);

\draw[Melon,very thin] (0,0) -- (-0.3,0);
\draw[Melon,very thin] (0,1) -- (-0.3,1);
\draw[Melon,very thin] (0,2) -- (-0.3,2);
\draw[Melon,very thin] (0,3) -- (-0.3,3);
\draw[Melon,very thin] (0,4) -- (-0.3,4);

\draw[Melon,very thin] (0,0) -- (-0.3,0);
\draw[Melon,very thin] (0,1) -- (-0.3,1);
\draw[Melon,very thin] (0,2) -- (-0.3,2);
\draw[Melon,very thin] (0,3) -- (-0.3,3);
\draw[Melon,very thin] (0,4) -- (-0.3,4);

\draw[Melon,very thin] (0,0) -- (-0.3,0.3);
\draw[Melon,very thin] (0,1) -- (-0.3,1.3);
\draw[Melon,very thin] (0,2) -- (-0.3,2.3);
\draw[Melon,very thin] (0,3) -- (-0.3,3.3);
\draw[Melon,very thin] (0,4) -- (-0.3,4.3);

\draw[Melon,very thin] (0,0) -- (-0.3,-0.3);
\draw[Melon,very thin] (0,1) -- (-0.3,0.7);
\draw[Melon,very thin] (0,2) -- (-0.3,1.7);
\draw[Melon,very thin] (0,3) -- (-0.3,2.7);
\draw[Melon,very thin] (0,4) -- (-0.3,3.7);

\draw[Melon,very thin] (4,0) -- (4.3,0.3);
\draw[Melon,very thin] (4,1) -- (4.3,1.3);
\draw[Melon,very thin] (4,2) -- (4.3,2.3);
\draw[Melon,very thin] (4,3) -- (4.3,3.3);
\draw[Melon,very thin] (4,4) -- (4.3,4.3);

\draw[Melon,very thin] (4,0) -- (4.3,-0.3);
\draw[Melon,very thin] (4,1) -- (4.3,0.7);
\draw[Melon,very thin] (4,2) -- (4.3,1.7);
\draw[Melon,very thin] (4,3) -- (4.3,2.7);
\draw[Melon,very thin] (4,4) -- (4.3,3.7);

\draw[Melon,very thin] (0,4) -- (0.3,4.3);
\draw[Melon,very thin] (1,4) -- (1.3,4.3);
\draw[Melon,very thin] (2,4) -- (2.3,4.3);
\draw[Melon,very thin] (3,4) -- (3.3,4.3);
\draw[Melon,very thin] (4,4) -- (4.3,4.3);

\draw[Melon,very thin] (0,4) -- (-0.3,4.3);
\draw[Melon,very thin] (1,4) -- (0.7,4.3);
\draw[Melon,very thin] (2,4) -- (1.7,4.3);
\draw[Melon,very thin] (3,4) -- (2.7,4.3);
\draw[Melon,very thin] (4,4) -- (3.7,4.3);

\draw[Melon,very thin] (0,0) -- (0.3,-0.3);
\draw[Melon,very thin] (1,0) -- (1.3,-0.3);
\draw[Melon,very thin] (2,0) -- (2.3,-0.3);
\draw[Melon,very thin] (3,0) -- (3.3,-0.3);
\draw[Melon,very thin] (4,0) -- (4.3,-0.3);

\draw[Melon,very thin] (0,0) -- (-0.3,-0.3);
\draw[Melon,very thin] (1,0) -- (0.7,-0.3);
\draw[Melon,very thin] (2,0) -- (1.7,-0.3);
\draw[Melon,very thin] (3,0) -- (2.7,-0.3);
\draw[Melon,very thin] (4,0) -- (3.7,-0.3);

	\foreach \i in {1,...,\m} {
		\foreach \j in {1,...,\n} {
			
			\pgfmathtruncatemacro{\ip}{\i+1}
			\pgfmathtruncatemacro{\jm}{\j-1}
			\pgfmathtruncatemacro{\jp}{\j+1}
			
			\ifnum\ip>\m\else
			\draw[Melon,very thin] (v\i\j)--(v\ip\j);
			\fi
			
			\ifnum\jp>\n\else
			\draw[Melon,very thin] (v\i\j)--(v\i\jp);
			\fi
			
			\ifnum\ip>\m\else
			\ifnum\jp>\n\else
			\draw[Melon,very thin] (v\i\j)--(v\ip\jp);
			\fi\fi
			
			\ifnum\ip>\m\else
			\ifnum\jm<1\else
			\draw[Melon,very thin] (v\i\j)--(v\ip\jm);
			\fi\fi
		}
\draw (0,0) node {$3$};
\draw (1,0) node{$4$};
\draw (2,0) node {$5$};
\draw (3,0) node {$1$};
\draw (4,0) node{$2$};

\draw (0,1) node {$5$};
\draw (1,1) node{$1$};
\draw (2,1) node{$2$};
\draw (3,1) node{$3$};
\draw (4,1) node {$4$};

\draw (0,2) node{$2$};
\draw (1,2) node{$3$};
\draw (2,2) node{$4$};
\draw (3,2) node{$5$};
\draw (4,2) node{$1$};

\draw (0,3) node {$4$};
\draw (1,3) node {$5$};
\draw (2,3) node {$1$};
\draw (3,3) node {$2$};
\draw (4,3) node {$3$};

\draw (0,4) node{$1$};
\draw (1,4) node{$2$};
\draw (2,4) node{$3$};
\draw (3,4) node{$4$};
\draw (4,4) node{$5$};

	}
\end{tikzpicture}
\caption{King's grid $\mathbb{Z}\boxtimes \mathbb{Z}$}
\end{subfigure}
\caption{$\fd$-colorings of the Cartesian and King's grids}
\label{fig:grids1}
\end{figure}

\begin{figure}[ht]
	\centering
	\begin{subfigure}[t]{0.48\textwidth}
		\centering
	\begin{tikzpicture}[scale=0.370, line width=0.2pt]
		
		\def\a{1.5}
		
		\pgfmathsetmacro{\w}{2*\a}
		\pgfmathsetmacro{\h}{\a*sqrt(3)}
		
		\pgfmathsetmacro{\dx}{1.5*\a}   
		\pgfmathsetmacro{\dy}{\h}       
		\pgfmathsetmacro{\shift}{\h/2}  
		
		\newcommand{\hex}[2]{
			\draw [Melon,very thin]
			(#1 + 0       , #2 + \h/2 )
			-- (#1 + \a/2   , #2 + \h   )
			-- (#1 + 3*\a/2 , #2 + \h   )
			-- (#1 + 2*\a   , #2 + \h/2 )
			-- (#1 + 3*\a/2 , #2        )
			-- (#1 + \a/2   , #2        )
			-- cycle;
		}

		\foreach \x in {0,...,4}{
			\foreach \y in {0,...,3}{
				\pgfmathparse{mod(\x,2)==1 ? \shift : 0}
				\edef\yshift{\pgfmathresult}
				\hex{\x*\dx}{\y*\dy - \yshift}
			}
		}
		
		\draw (3,-1.299) node {$2$};
		\draw (4.5,-1.299) node {$1$};
		\draw (7.5,-1.299) node {$2$};
		\draw (9,-1.299) node {$1$};
		
		\draw (0.75,0) node {$3$};
		\draw (2.25,0) node {$1$};
		\draw (5.25,0) node {$3$};
		\draw (6.75,0) node {$1$};
		\draw (9.75,0) node {$3$};
		\draw (11.25,0) node {$1$};
		
		\draw (0,1.299) node {$2$};
		\draw (3,1.299) node {$3$};
		\draw (4.5,1.299) node {$2$};
		\draw (7.5,1.299) node {$3$};
		\draw (9.1,1.299) node {$2$};
		\draw (12,1.299) node {$3$};

		\draw (0.75,2.598) node {$1$};
		\draw (2.25,2.598) node {$2$};
		\draw (5.25,2.598) node {$1$};
		\draw (6.75,2.598) node {$2$};
		\draw (9.75,2.598) node {$1$};
		\draw (11.25,2.598) node {$2$};

		\draw (0,3.8978) node {$3$};
		\draw (3,3.8978) node {$1$};
		\draw (4.5,3.8978) node {$3$};
		\draw (7.5,3.8978) node {$1$};
		\draw (9.1,3.8978) node {$3$};
		\draw (12,3.8978) node {$1$};	
		
		\draw (0.75,5.1986) node {$2$};
		\draw (2.25,5.1986) node {$3$};
		\draw (5.25,5.1986) node {$2$};
		\draw (6.75,5.1986) node {$3$};
		\draw (9.75,5.1986) node {$2$};
		\draw (11.25,5.1986) node {$3$};

		\draw (0,6.4958) node {$1$};
		\draw (3,6.4958) node {$2$};
		\draw (4.5,6.4958) node {$1$};
		\draw (7.5,6.4958) node {$2$};
		\draw (9.1,6.4958) node {$1$};
		\draw (12,6.4958) node {$2$};

		\draw (0.75,7.7948) node {$3$};
		\draw (2.25,7.7948) node {$1$};
		\draw (5.25,7.7948) node {$3$};
		\draw (6.75,7.7948) node {$1$};
		\draw (9.75,7.7948) node {$3$};	
		\draw (11.25,7.7948) node {$1$};

		\draw (0,9.0938) node {$2$};
		\draw (3,9.0938) node {$3$};
		\draw (4.5,9.0938) node {$2$};
		\draw (7.5,9.0938) node {$3$};
		\draw (9.1,9.0938) node {$2$};
		\draw (11.9,9.0938) node {$3$};
		
		\draw (0.75,10.3928) node {$1$};
		\draw (2.25,10.3928) node {$2$};
		\draw (5.25,10.3928) node {$1$};
		\draw (6.75,10.3928) node {$2$};
		\draw (9.75,10.3928) node {$1$};
		\draw (11.25,10.3928) node {$2$};

	\end{tikzpicture}
		\caption{Hexagonal grid \,\,\,  }
	\label{key3}
	\end{subfigure}
\begin{subfigure}[t]{0.48\textwidth}
	\centering

		\begin{tikzpicture}[line width=0.2pt,every paath/.style={gray!40}]

		\draw[Melon,very thin] (-2.3,0) -- (2.3,0);
		\draw[Melon,very thin] (-1.8,0.86) -- (1.8,0.86);
		\draw[Melon,very thin] (-1.8,-0.86) -- (1.8,-0.86);
		\draw[Melon,very thin] (-1.3,1.73) -- (1.3,1.73);
		\draw[Melon,very thin] (-1.3,-1.73) -- (1.3,-1.73);
		\draw[Melon,very thin] (-1.2,-2.07) -- (1.2,2.07);
		\draw[Melon,very thin] (-1.7,-1.2) -- (0.2,2.07);
		\draw[Melon,very thin] (1.7,1.2) -- (-0.2,-2.07);
		\draw[Melon,very thin] (-2.2,-0.34) -- (-0.8,2.07);
		\draw[Melon,very thin] (2.2,0.34) -- (0.8,-2.07);
		\draw[Melon,very thin] (1.2,-2.07) -- (-1.2,2.07);
		\draw[Melon,very thin] (-1.7,1.2) -- (0.2,-2.07);
		\draw[Melon,very thin] (1.7,-1.2) -- (-0.2,2.07);
		\draw[Melon,very thin] (-2.2,0.34) -- (-0.8,-2.07);
		\draw[Melon,very thin] (2.2,-0.34) -- (0.8,2.07);
		
		\draw[ Melon,very thin] (2.3,0) -- (2.5,0);
	\draw[ Melon,very thin] (-2.3,0) -- (-2.5,0);
	\draw[ Melon,very thin] (-1.8,0.86) -- (-2,0.86);
	\draw[Melon,very thin] (1.8,0.86) -- (2,0.86);
	\draw[Melon,very thin] (-1.8,-0.86) -- (-2,-0.86);
	\draw[ Melon,very thin] (1.8,-0.86) -- (2,-0.86);
	\draw[Melon,very thin] (1.3,1.73) -- (1.5,1.73);
	\draw[Melon,very thin] (1.3,-1.73) -- (1.5,-1.73);
	\draw[Melon,very thin] (-1.3,1.73) -- (-1.5,1.73);
	\draw[Melon,very thin] (-1.3,-1.73) -- (-1.5,-1.73);
	
	\draw[Melon,very thin] (-1.7,-1.2) -- (-1.8,-1.37);
	\draw[Melon,very thin] (0.2,2.07) -- (0.3,2.26);
	
	\draw[Melon,very thin] (-1.7,1.2) -- (-1.8,1.37);
	\draw[Melon,very thin] (0.2,-2.07) -- (0.3,-2.26);
	
	\draw[Melon,very thin] (1.7,-1.2) -- (1.8,-1.36);
	\draw[Melon,very thin] (-0.2,2.07) -- (-0.3,2.27);

	\draw[Melon,very thin] (1.7,1.2) -- (1.8,1.37);
	\draw[Melon,very thin] (-0.2,-2.07) -- (-0.3,-2.26);
	
	\draw[Melon,very thin] (-2.2,-0.34) -- (-2.3,-0.5);
	\draw[Melon,very thin] (1.2,2.07) -- (1.31,2.27);
	
	\draw[Melon,very thin] (2.2,-0.34) -- (2.3,-0.5);
	\draw[Melon,very thin] (-1.2,2.07) -- (-1.3,2.26);
	
	\draw[Melon,very thin] (-2.2,0.34) -- (-2.3,0.5);
	\draw[Melon,very thin] (1.2,-2.07) -- (1.3,-2.26);
	
	\draw[Melon,very thin] (2.2,0.34) -- (2.3,0.5);
	\draw[Melon,very thin] (-1.2,-2.07) -- (-1.3,-2.26);
	
	\draw[Melon,very thin] (0.8,2.07) -- (0.7,2.26);
	\draw[Melon,very thin] (-0.8,-2.07) -- (-0.7,-2.26);
	
	\draw[Melon,very thin] (0.8,-2.07) -- (0.7,-2.26);
	\draw[Melon,very thin] (-0.8,2.07) -- (-0.68,2.26);
		
		\draw (-1,-1.7) node {$1$};	
	\draw (0,-1.7) node {$2$};
	\draw (1,-1.7) node {$1$};
	
	\draw (-1.5,-0.9) node {$3$};
	\draw (-0.5,-0.9) node {$4$};	
	\draw (0.5,-0.9) node {$3$};
	\draw (1.5,-0.9) node {$4$};
	
	\draw (-2,0) node {$1$};
	\draw (-1,0) node {$2$};	
	\draw (0,0) node {$1$};
	\draw (1,0) node {$2$};
	\draw (2,0) node {$1$};
	
	\draw (-1.5,0.85) node {$3$};
	\draw (-0.5,0.85) node {$4$};	
	\draw (0.5,0.85) node {$3$};
	\draw (1.5,0.85) node {$4$};

	\draw (-1,1.75) node {$1$};	
	\draw (0,1.75) node {$2$};
	\draw (1,1.75) node {$1$};

		
	\end{tikzpicture}
	\caption{Triangular grid}
\label{key1}
\end{subfigure}
\caption{$\fd$-colorings of the hexagonal and triangular grids}
\label{fig:lattices}
\end{figure}

We show that several well-known infinite lattices enjoy the equality from the title of this section. In Figure~\ref{fig:grids1}, the infinite Cartesian grid $\mathbb{Z}\square\mathbb{Z}$ and the infinite King's grid $\mathbb{Z}\boxtimes\mathbb{Z}$ are depicted, along with their $2$-frugal colorings. Figure~\ref{fig:lattices} depicts the infinite hexagonal lattice $\cal H$ and the infinite triangular lattice $\cal T$ again with their $2$-frugal colorings. The colorings yield that each of the infinite graphs $G$ attains the value $\fd(G)=\lceil \Delta(G)/2\rceil+1.$ Notably, $\Delta(\mathbb{Z}\square\mathbb{Z})=4$, $\Delta(\mathbb{Z}\boxtimes\mathbb{Z})=8$, $\Delta({\cal H})=3$ and $\Delta({\cal T})=6$, whereas $\fd(\mathbb{Z}\square\mathbb{Z})=3$, $\fd(\mathbb{Z}\boxtimes\mathbb{Z})=5$, $\fd({\cal H})=3$ and $\fd({\cal T})=4$.

We continue with Cartesian powers of the two-way infinite path, denoted by $\mathbb{Z}^{\square,n}$. Note that it represents the (infinite) graph $\mathbb{Z}\,\square\cdots\square\,\mathbb{Z}$, where there are $n$ factors. In establishing that the Cartesian powers of the two-way infinite path attain the basic lower bound, combine the fact $$\Big\lceil\frac{\Delta(\mathbb{Z}^{\square,n})}{2}\Big\rceil+1=n+1$$
with the statement of the following result. 

\begin{theorem}\label{}
If $n$ is a positive integer, then $\fd(\mathbb{Z}^{\square,n})=n+1.$
\end{theorem}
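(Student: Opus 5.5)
The lower bound is immediate from Observation~\ref{ob:lowerboundfor2} with $t=2$: since $\Delta(\mathbb{Z}^{\square,n})=2n$, every $2$-frugal coloring needs at least $\lceil 2n/2\rceil+1=n+1$ colors. (Observation~\ref{ob:lowerboundfor2} was stated for finite graphs, but the counting argument in \eqref{Delta} only looks at one vertex and its finite neighborhood, so it applies verbatim here.) So the work lies in constructing a $2$-frugal coloring with $n+1$ colors.

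I would use a linear coloring modulo $n+1$. For $x=(x_1,\ldots,x_n)\in\mathbb{Z}^n$, set
$$c(x)=\sum_{i=1}^{n} i\,x_i \pmod{n+1}.$$
This generalizes the pattern in Figure~\ref{fig:grids1}(a): for $n=2$ the formula gives $x_1+2x_2 \pmod 3$, which is (up to renaming colors) the diagonal pattern shown there.

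The verification uses the neighbors $x\pm e_i$ of $x$, whose colors are $c(x)\pm i \pmod{n+1}$. First, properness: $i\not\equiv 0 \pmod{n+1}$ for $i\in[n]$, so no neighbor shares the color of $x$. Second, frugality: the $2n$ offsets $\pm1,\ldots,\pm n$ run over the nonzero residues modulo $n+1$. Each nonzero residue $r$ is hit exactly by $+r$ (with $r\in[n]$) and by $-(n+1-r)$ (with $n+1-r\in[n]$), that is, exactly twice. Hence every color other than $c(x)$ appears exactly twice in $N(x)$, and the coloring is $2$-frugal with $n+1$ colors. The case $n=1$ gives $c(x)=x_1 \pmod 2$ on $\mathbb{Z}$, which is fine.

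The main obstacle is only conceptual: finding weights whose $\pm$ multiset covers the nonzero residues with multiplicity at most two. The choice of weights $1,\ldots,n$ modulo $n+1$ does this exactly, so I expect no real difficulty. The one point needing a remark is the infinite setting of the lower bound, which holds because the argument is local to a single closed neighborhood.
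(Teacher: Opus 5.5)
Your proposal is correct and matches the paper's proof, which uses the same coloring $f(x)=\sum_{i=1}^{n} i x_i \pmod{n+1}$ together with the lower bound from Observation~\ref{ob:lowerboundfor2}. The only difference is in checking frugality: the paper argues by contradiction (two of three same-colored neighbors must have offsets of the same sign, which forces $i=j$), whereas your direct count, that each nonzero residue arises exactly twice among $\pm1,\ldots,\pm n$, is slightly cleaner.
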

\begin{proof}
Set $G=\mathbb{Z}^{\boxtimes,n}$ and $t=\lceil \Delta(G)/2\rceil+1=n+1$ . By the inequality (\ref{Delta}) with $k=2$, we have $\chi_{2}^{f}(G)\geq\lceil \Delta(G)/2\rceil+1$, so in the rest of the paper we consider the reverse inequality. 

 We define a coloring $f$ of the vertices of $G$ by $$f(x)=\sum_{i=1}^{n}ix_{i} \pmod t$$ for each $x=(x_{1},\ldots,x_{n})\in V(G)$. Let $x=(x_{1},\ldots,x_{n})$ and $y=(y_{1},\ldots,y_{n})$ be adjacent vertices in $G$. By definition, $|x_{i}-y_{i}|=1$  for some $i\in[n]$ and $x_{j}=y_{j}$ and each $j\in[n]\setminus \{i\}$. Due to this, $f(x)-f(y)=i(x_{i}-y_{i})$ (mod $t$). Since $|x_{i}-y_{i}|=1$ and $i\in[n]$, it follows that $f(x)\not\equiv f(y)$ (mod $t$). Therefore, $f$ is a proper coloring of $G$.

Let $x=(x_{1},\ldots,x_{n})\in V(G)$ and  $f(x)=c\in\{0,\ldots,t-1\}$ (mod $t$). Let $y=(y_{1},\ldots,y_{n})$ differ from $x$ only in the $i$th coordinate. If $y_{i}=x_{i}-1$, then $f(x)-f(y)=i(x_{i}-y_{i})=i$ (mod $t$). Therefore, $f(y)=c-i$ (mod $t$). On the other hand, if $y_{i}=x_{i}+1$, then $f(y)=c+i$ (mod $t$). Thus, $f$ assigns $c-i$ or $c+i$ (taken modulo $t$) to any neighbor of $x$ that differs from $x$ in the $i$th coordinate.
Suppose to the contrary that there exists a vertex $x=(x_{1},\ldots,x_{n})$ adjacent to three distinct vertices $w=(w_{1},\ldots,w_{n})$, $y=(y_{1},\ldots,y_{n})$ and $z=(z_{1},\ldots,z_{n})$ in $G$ such that $f(w)=f(y)=f(z)$ (mod $t$). With the above argument in mind, we may assume without loss of generality that $f(y)=c-i$ (mod $t$) and $f(z)=c-j$ (mod $t$) for some $i,j\in[n]$. Therefore, $i-j\equiv0$ (mod $t$). This necessarily implies that $i=j$, and hence, $y_{i}=x_{i}-1=z_{i}$. This contradicts the fact that $y$ and $z$ are distinct. Thus, $f$ is a $2$-frugal coloring of $G$ with $t$ colors. 
\end{proof}

As an immediate consequence of the theorem above, we get the following result for $n$-dimensional grids.  
\begin{corollary}
\label{cor:shortpaths}
    If $G=\square_{i=1}^{n}P_{m_i}$ with $m_{i}\geq 3$ for all $i\in [n]$ and $n\geq 1$, then $\chi_{2}^{f}(G)=\lceil\frac{\Delta(G)}{2}\rceil+1$.
\end{corollary}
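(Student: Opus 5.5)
Since the corollary has matching lower and upper bounds to establish, the plan is to prove each separately. The lower bound $\chi_{2}^{f}(G)\ge\lceil\Delta(G)/2\rceil+1$ holds for every graph by Observation~\ref{ob:lowerboundfor2} with $t=2$. It is also useful to compute $\Delta(G)$ explicitly. Each $m_i\ge 3$, so every factor $P_{m_i}$ has an internal vertex of degree $2$. The vertex whose coordinates are all internal therefore has degree $2n$, and no vertex has larger degree, so $\Delta(G)=2n$ and the target value is $\lceil\Delta(G)/2\rceil+1=n+1$.

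For the upper bound, the plan is to note that $G$ is isomorphic to an induced subgraph of $\mathbb{Z}^{\square,n}$: identify $V(P_{m_i})$ with $\{1,\ldots,m_i\}\subset\mathbb{Z}$, so $V(G)$ becomes a box in $\mathbb{Z}^n$, and two box vertices are adjacent in $G$ exactly when they are adjacent in $\mathbb{Z}^{\square,n}$. I would then restrict the coloring $f(x)=\sum_{i=1}^n i x_i \pmod{n+1}$ from the preceding theorem to this box. Being proper and having no color more than twice in any neighborhood are both inherited by induced subgraphs, because every neighborhood in $G$ is contained in the corresponding neighborhood in $\mathbb{Z}^{\square,n}$. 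This gives $\chi_{2}^{f}(G)\le n+1$.

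Combining the two bounds yields $\chi_{2}^{f}(G)=n+1=\lceil\Delta(G)/2\rceil+1$. The main (minor) point to check is the degree computation, since it is the only place where the hypothesis $m_i\ge 3$ is used. If some $m_i=2$, then $\Delta(G)$ falls below $2n$, and $n+1$ colors need not match the lower bound.
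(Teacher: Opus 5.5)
Your proposal is correct and is essentially the paper's argument: the paper obtains this corollary as an immediate consequence of $\chi_2^f(\mathbb{Z}^{\square,n})=n+1$, that is, by restricting the coloring $f(x)=\sum_{i=1}^n i x_i \pmod{n+1}$ to the finite box, which is an induced subgraph, so properness and $2$-frugality are inherited. The hypothesis $m_i\ge 3$ is used exactly as you say, to ensure $\Delta(G)=2n$ so that the basic lower bound $\lceil\Delta(G)/2\rceil+1$ equals $n+1$.
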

The requirement $m_i\ge 3$ for all $i$ is needed to maintain the same maximum degree as in the product of infinite paths. In fact, it can happen that the basic lower bound is not attained if some $m_i$ are equal to $2$. In particular, note that $\fd(Q_3)=4$, while $\Delta(Q_3)=3$, and so the basic lower bound equals $3$. 

In order to prove a similar result for the strong power of paths, we need the following elementary but useful lemma.

\begin{lemma}\label{lem:integers}
Letting $V=\{-1,0,1\}^{n}$ we define 
$$\eta(v)=\sum_{i=1}^{n}3^{i-1}v_i$$ 
for each $v=(v_1,\ldots,v_n)\in V$. The function $\eta:V\to I$, where $I=\{-\frac{3^{n}-1}{2},-\frac{3^{n}-1}{2}+1,\ldots,\frac{3^{n}-1}{2}\}$, is bijective. 
\end{lemma}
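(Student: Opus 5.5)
The plan is to combine a cardinality count with injectivity; surjectivity then follows automatically since both sets are finite. First, $|V|=3^n$. Second, $|I|=2\cdot\frac{3^n-1}{2}+1=3^n$. Third, $\eta$ maps $V$ into $I$. For this step observe that
$$|\eta(v)|\le \sum_{i=1}^{n}3^{i-1}=\frac{3^n-1}{2},$$
and that $\eta(v)$ is an integer. Hence it suffices to show that $\eta$ is injective.

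For injectivity, take $u\ne v$ in $V$ with $\eta(u)=\eta(v)$, and let $k$ be the smallest index with $u_k\ne v_k$. Then
$$0=\eta(u)-\eta(v)=\sum_{i=k}^{n}3^{i-1}(u_i-v_i).$$
Each difference $u_i-v_i$ lies in $\{-2,\dots,2\}$, and $u_k-v_k\in\{\pm1,\pm2\}$. Dividing by $3^{k-1}$, we get
$$u_k-v_k\equiv 0 \pmod 3.$$
This is impossible, because $\pm1,\pm2$ are not divisible by $3$. So $\eta$ is injective, and therefore bijective onto $I$.

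An alternative route is induction on $n$, writing $\eta_n(v)=\eta_{n-1}(v')+3^{n-1}v_n$. The three shifted copies of $I_{n-1}$, by $-3^{n-1}$, $0$ and $3^{n-1}$, tile $I_n$ exactly, since $|I_{n-1}|=3^{n-1}$. This is just the balanced ternary representation.

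There is no real obstacle here. The only point requiring care is the mod-$3$ step (or, in the inductive version, checking that the three translates are disjoint and contiguous). That step uses that the differences $u_i-v_i$ are at most $2$ in absolute value, so a nonzero lowest difference cannot be divisible by $3$.
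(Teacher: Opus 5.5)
Your proof is correct and has the same skeleton as the paper's: you check $\eta(V)\subseteq I$, prove injectivity, and conclude from $|V|=|I|=3^n$. The only difference is the injectivity step. The paper takes the \emph{largest} index $k$ with $v_k\neq y_k$ and shows that the term $3^{k-1}(v_k-y_k)$ dominates, since $\bigl|\sum_{i<k}3^{i-1}(v_i-y_i)\bigr|\le 3^{k-1}-1$, whereas you take the \emph{smallest} differing index and argue modulo $3$; both arguments are valid, rest on $|u_i-v_i|\le 2$, and are equally short.
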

\begin{proof}
The proof that $-(3^{n}-1)/2\leq \eta(v)\leq(3^{n}-1)/2$ for all $x\in V$ is elementary. 

Let $v=(v_{1},\ldots,v_{n})$ and $y=(y_{1},\ldots,y_{n})$ be distinct $n$-tuples in $V$, and let $k$ be the largest index such that $v_k\neq y_k$. This implies that $v_i=y_i$ for all $i> k$. Therefore, $\eta(v)-\eta(y)=3^{k-1}(v_k-y_k)+\sum_{i=1}^{k-1}3^{i-1}(v_i-y_i)$. Since $|v_i-y_i|\le 2$ for all $i\in[n]$ and $v_k\ne y_k$, it follows that $|3^{k-1}(v_k-y_k)|\geq3^{k-1}$ and $|\sum_{i=1}^{k-1}3^{i-1}(v_i-y_i)|\leq 2\sum_{i=1}^{k-1}3^{i-1}=3^{k-1}-1$. Thus, $3^{k-1}(v_k-y_k)+\sum_{i=1}^{k-1}3^{i-1}(v_i-y_i)\neq0$, and so $\eta(v)\neq \eta(y)$.

Note that $|V|=3^n=|I|$, which implies that $\eta:V\to I$ is bijective.
\end{proof}

The {\em $n$th strong power} of a graph $G$, denoted by $G^{\boxtimes,n}$, is defined as
$G\,\boxtimes\cdots\boxtimes\,G$, where there are $n$ factors.  
In establishing that the strong powers of the two-way infinite path attain the basic lower bound, combine the fact $$\Big\lceil\frac{\Delta(\mathbb{Z}^{\boxtimes,n})}{2}\Big\rceil+1=\frac{3^{n}+1}{2}$$
with the statement of the following result. 
\begin{theorem}\label{thm:strongpowerofpath}
If $n$ is a positive integer, then $\chi_{2}^{f}(\mathbb{Z}^{\boxtimes,n})=\frac{3^{n}+1}{2}$.
\end{theorem}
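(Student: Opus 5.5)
The lower bound comes directly from Observation~\ref{ob:lowerboundfor2} (equivalently, inequality~\eqref{Delta} with $t=2$). In $\mathbb{Z}^{\boxtimes,n}$ every vertex has degree $3^n-1$, so $\fd(\mathbb{Z}^{\boxtimes,n})\ge \lceil (3^n-1)/2\rceil+1=(3^n+1)/2$. It therefore remains to construct a $2$-frugal coloring with $t=(3^n+1)/2$ colors.

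Mirroring the proof for Cartesian powers, I would define
$$f(x)=\sum_{i=1}^{n}3^{i-1}x_i \pmod t$$
for $x=(x_1,\ldots,x_n)\in\mathbb{Z}^n$. Two vertices $x,y$ are adjacent exactly when $v=y-x\in\{-1,0,1\}^n\setminus\{0\}$, and in that case $f(y)-f(x)\equiv \eta(v)\pmod t$, with $\eta$ as in Lemma~\ref{lem:integers}. By that lemma, $\eta$ maps $\{-1,0,1\}^n\setminus\{0\}$ bijectively onto $I\setminus\{0\}=\{\pm1,\ldots,\pm(3^n-1)/2\}$.

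\textbf{Properness.} For $v\neq 0$ we have $0<|\eta(v)|\le (3^n-1)/2=t-1<t$. Hence $\eta(v)\not\equiv 0\pmod t$, so $f(x)\ne f(y)$ for adjacent $x,y$.

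\textbf{Frugality.} Fix $x$ and a residue $d$. The neighbors of $x$ with color $f(x)+d$ correspond to the vectors $v$ with $\eta(v)\equiv d\pmod t$. Since $\eta$ is injective, this count equals the number of integers $k\in I\setminus\{0\}$ with $k\equiv d\pmod t$. The set $I\setminus\{0\}$ lies in an interval of length $3^n-1=2t-2<2t$. Any residue class mod $t$ therefore meets it in at most two integers: three such integers would span a length of at least $2t$. So each color appears at most twice in $N(x)$ and $f$ is $2$-frugal with $t$ colors. (In fact each nonzero residue $d$ appears exactly twice, as $d$ and $d-t$, which matches the equality case of the lower bound.)

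No step here is a real obstacle; the main point is recognizing that the base-$3$ weights together with Lemma~\ref{lem:integers} make the neighbor color differences a bijection onto $\{\pm1,\ldots,\pm(t-1)\}$. Once that is in place, the counting argument in the frugality step is immediate.
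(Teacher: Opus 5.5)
Your proof is correct and follows the paper's argument. You use the same lower bound from Observation~\ref{ob:lowerboundfor2}, the same coloring $\sum_i 3^{i-1}x_i \pmod{t}$ with $t=(3^n+1)/2$, and Lemma~\ref{lem:integers} to identify neighbor color differences with the values $\pm1,\ldots,\pm(t-1)$. The only difference is cosmetic: in the frugality step the paper notes that two of three distinct nonzero values share a sign and so cannot be congruent mod $t$, while you observe that an interval of length $2t-2$ meets each residue class at most twice, which is the same fact.
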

\begin{proof}
Set $G=\mathbb{Z}^{\boxtimes,n}$.
By Observation \ref{ob:lowerboundfor2} with $t=2$, we have $\chi_{2}^{f}(G)\geq\lceil \Delta(G)/2\rceil+1=(3^n-1)/2+1$. Let us now prove the reverse inequality.

Set $t=(3^{n}+1)/2$, and define a coloring $c$ of the vertices of $G$ by $$c(x)=\sum_{i=1}^{n}3^{i-1}x_i\pmod t$$ 
for each $x=(x_1,\ldots, x_n)\in V(G)$. Let $x=(x_1,\ldots,x_n)$ and $y=(y_1,\ldots,y_n)$ be adjacent vertices in $G$. By the adjacency rule of the strong product graph $G$, we have $x_i-y_i\in\{-1,0,1\}$ for all $i\in[n]$, and $c(x)-c(y)=\sum_{i=1}^{n}3^{i-1}(x_i-y_i)\pmod t$. Note that the integer $\sum_{i=1}^{n}3^{i-1}(x_i-y_i)$ equals $\eta(x-y)$, where $\eta$ is the function in Lemma~\ref{lem:integers}, it is non-zero and has absolute value less than $t$. Hence, $c(x)\not\equiv c(y)\pmod t$, and therefore $c$ is a proper coloring of $G$.

Suppose to the contrary that there exists a vertex $x=(x_1,\ldots,x_n)$ adjacent to three distinct vertices $w=(w_1,\ldots,w_n)$, $y=(y_1,\ldots,y_n)$ and $z=(z_1,\ldots,z_n)$ in $G$ such that $c(w)\equiv c(y)\equiv c(z)\pmod t$. For adjacent vertices $u\in\{w,y,z\}$ and $x$, we have $c(u)=c(x)-\sum_{i=1}^{n}3^{i-1}(x_i-u_i)\pmod t$. 

Let $a_u=\sum_{i=1}^{n}3^{i-1}(x_i-u_i)$ for $u\in\{w,y,z\}$. Clearly, $c(u)\equiv c(v)\pmod t$ if and only if $a_u\equiv a_v \pmod t$ for any $u,v\in\{w,y,z\}$. 
Since $x-w$, $x-y$ and $x-z$ are non-zero distinct $n$-tuples in $V$, and $a_u=\eta(x-u)$ for all $u\in\{w,y,z\}$, Lemma \ref{lem:integers} implies that $a_w$, $a_y$ and $a_z$ are non-zero distinct integers. Thus, at least two integers in $\{a_w,a_y,a_z\}$ are of the same sign. So, we may sssume that $a_w$ and $a_y$ are both positive, and that $a_w<a_y$ in view of Lemma~\ref{lem:integers}. We now infer that $a_{w}\not\equiv a_{y}$ as $0<a_w<a_y\leq t$, a contradiction. Therefore, no vertex has three neighbors of the same color, showing $\chi_2^{f}(G)\le t$.
\end{proof}

We mention two other families of graphs that attain the basic lower bound. Recall that among torus graphs $C_m\square C_n$ this property holds if and only if both $m$ and $n$ are divisible by $3$ (Proposition~\ref{Torus}). 
Another class of graphs with the desired property are claw-free cubic graphs different from $K_4$; see Proposition~\ref{prp:clawfree}. It would be interesting to find a characterization of all cubic graphs $G$ with $\fd(G)=3$.

\section{Concluding remarks}

  We conclude the paper by posing some open problems that arise from our work. We start with a question about computational complexity. Recall that the decision problem associated with $\fd$ is known to be NP-complete. We think that even its restriction, which is determining whether the $2$-frugal chromatic number of a given graph attains the basic lower bound, might be difficult.

\begin{p}
Is determining whether a graph $G$ satisfies $\fd(G)=\lceil\Delta(G)/2\rceil+1$ an NP-complete problem?
\end{p}

The block graphs $G$ that achieve the trivial lower bound $\fd(G)\ge \chi(G)$ have a nice characterization: by Theorem~\ref{thm:block}, they are exactly those block graphs $G$ for which $\omega(G)\ge \lceil\Delta(G)/2\rceil+1$. We propose the challenge to find its extension from block graphs to all chordal graphs.  
\begin{p}
Characterize chordal graphs $G$ with $\fd(G)=\chi(G)$.
\end{p}

In Theorem~\ref{boundsDelta=3}, we proved that $\fd(G)\le 5$ when $G$ is a subcubic graph. Despite the involved proof of this result, we do not know whether the bound can be improved to $4$, and pose it as an open problem.     

\begin{p}
Is there a graph $G$ with $\Delta(G)=3$ and $\fd(G)=5$?
\end{p}

In Theorem~\ref{N-G} we proved the sharp lower bound 
\begin{equation}
\label{eq:equality}
    \chi_{2}^{f}(G)+\chi_{2}^{f}(\overline{G})\geq \dfrac{n}{2}+2, 
\end{equation}
    which holds for all graphs $G$ with the exception of six special graphs on $9$ vertices. In view of this, we pose the following problem. 
    
\begin{p}
Characterize graphs $G$ of order $n$ that achieve the bound in~\eqref{eq:equality}.
\end{p}

Note that for the Cartesian products of paths, each of which has at least three vertices the $2$-frugal chromatic number has been determined (see Corollary~\ref{cor:shortpaths}). However, if each of the paths is of length $1$, the value is not known. Hence, we conclude the paper with the following open problem:
\begin{p}
  Determine $\fk(Q_n)$ for hypercubes $Q_n$ and $t\ge 2$.
\end{p}    



\section*{Acknowledgments}

B.B. was supported by the Slovenian Research and Innovation agency (grants P1-0297, N1-0285, and N1-0431). 
W.H. was supported by the National Natural Science Foundation of China (No.\ 12371345).


\end{document}